\newcommand{\f}{{\tilde{f}}}
\newcommand{\F}{{\tilde{F}}}
\newtheorem{algorithm}{Algorithm}
\newtheorem{theorem}{Theorem}
\newtheorem{corollary}{Corollary}
\newtheorem{lemma}{Lemma}
\newtheorem{remark}{Remark}
\newtheorem{proposition}{Proposition}
\newtheorem{assumption}{Assumption}
\newtheorem{definition}{Definition}
\newcommand{\tabincell}[2]{\begin{tabular}{@{}#1@{}}#2\end{tabular}}
\newcommand{\us}[1]{{\color{black}#1}}
\newcommand{\uss}[1]{{\color{black}#1}}
\newcommand{\blue}[1]{{\color{black}#1}}
\newcommand{\afo}[1]{{\color{black}#1}}
\newcommand{\usv}[1]{{\color{black}#1}}
\newcommand{\uvv}[1]{{\color{black}#1}}
\newcommand{\redd}[1]{{\color{black}#1}}
\newcommand{\magg}[1]{{\color{black}#1}}
\newcommand{\jb}[1]{{\color{black}#1}}
\newcommand{\blu}[1]{{\color{black}#1}}
\begin{document}
\allowdisplaybreaks
\title{Smoothed Variable Sample-size Accelerated Proximal Methods for Nonsmooth Stochastic Convex Programs
}


\author{A.~Jalilzadeh\footnote{University of Arizona, Tucson, AZ 85721 E-mail: afrooz@arizona.edu}~\and
        U.~V.~Shanbhag\footnotemark[\value{footnote}, \footnote{Pennsylvania State University, University Park, PA 16803 E-mail: udaybag@psu.edu}]~\and
                J.~Blanchet\footnotemark[\value{footnote}, \footnote{Stanford University, Stanford, CA 94305 E-mail: jblanche@stanford.edu}]~\and
	P.~W.~Glynn\footnotemark[\value{footnote}, \footnote{Stanford University, Stanford, CA 94305 Email:glynn@stanford.edu}]}




\date{}

\maketitle
\begin{abstract}
We consider the unconstrained minimization of the function $F$, where $F=f+g$,
$f$ is an expectation-valued nonsmooth convex or strongly convex function, and
$g$ is a closed, convex, and proper function. (I) {\bf Strongly convex
$f$.} When ${f}$ is  $\mu$-strongly
convex in $x$, traditional stochastic subgradient schemes ({\bf SSG})
 often display poor behavior, arising in part from noisy subgradients and
diminishing steplengths.  Instead, we apply a variable sample-size accelerated
proximal scheme ({\bf VS-APM}) on $F_{\eta}$, the Moreau envelope of $F$; we
term such a scheme as ({\bf mVS-APM}) and in contrast with ({\bf SSG}) schemes,
({\bf mVS-APM}) utilizes {\em constant} steplengths and {\em increasingly
exact} gradients. We consider two settings. (a) {\em Bounded domains.} In this
setting,  {\bf (mVS-APM)}  displays linear convergence in inexact gradient
steps, each of which requires utilizing an inner {\bf (prox-SSG)} scheme.
Specifically, {\bf (mVS-APM)}  achieves an optimal oracle complexity in {{\bf
prox-SSG} steps} of $\mathcal{O}({1}/{\epsilon})$ with an iteration complexity
of $\mathcal{O}( \log(1/\epsilon))$ in inexact (outer) gradients of $F_{\eta}$
to achieve  an $\epsilon$-accurate solution in mean-squared error, computed via
an increasing number of inner (stochastic) subgradient steps;  (b) {\em
Unbounded domains.} In this regime,  under an assumption of state-dependent
bounds on subgradients, an unaccelerated variant ({\bf mVS-PM}) is linearly
convergent where increasingly exact gradients $\nabla_x F_{\eta}(x)$ are
approximated with increasing accuracy via {\bf (SSG)} schemes.  Notably, {\bf
(mVS-PM)} also displays an optimal oracle complexity of
$\mathcal{O}(1/\epsilon)$; (II) {\bf Convex $f$.} When $f$ is merely convex but
smoothable, by suitable choices of the smoothing, steplength, and batch-size
sequences, smoothed ({\bf VS-APM}) (or {\bf sVS-APM}) achieves an optimal
oracle complexity of $\mathcal{O}(1/\epsilon^2)$ to obtain an
$\epsilon$-optimal solution. Our results can be specialized to two important
cases: (a) {\bf Smooth $f$.} Since smoothing is no longer required,  we observe
that ({\bf VS-APM}) admits the optimal rate and oracle complexity, matching
prior findings; (b) {\bf Deterministic nonsmooth $f$.} In the nonsmooth
deterministic regime, ({\bf sVS-APM}) reduces to a smoothed accelerated
proximal method  ({\bf s-APM}) that is both asymptotically convergent and
optimal in that it displays a  complexity of
$\mathcal{O}(1/\epsilon)$, matching the bound provided by
Nesterov in 2005 for producing $\epsilon$-optimal solutions. Finally, ({\bf
sVS-APM}) and ({\bf VS-APM}) produce sequences that converge almost surely to a
solution of the original problem.  
\end{abstract}
\section{Introduction}
\label{intro}
We consider the following stochastic nonsmooth convex optimization problem 
\begin{align}\label{main problem} \min_{x\in
\mathbb{R}^n} \ F(x), \mbox{ where } { F(x) \triangleq f(x) + g(x)},  \end{align}  
$f(x)\triangleq\mathbb{E}[{\f}(x,\xi(\omega))]$, $\xi : \Omega
\rightarrow \mathbb{R}^o$, ${\f}: \mathbb{R}^n \times \mathbb{R}^o \rightarrow
\mathbb{R}$, {${g}$ is a closed, convex, and proper deterministic function
with an efficient proximal evaluation,} $(\Omega,\mathcal{H},\mathbb{P})$
denotes the associated probability space, and $\mathbb{E}[\bullet]$ denotes the
expectation with respect to the probability measure $\mathbb{P}$. Throughout,
we refer to ${\f}(x,\xi(\omega))$ by ${\f}(x,\omega)$, whereas ${\F}(x,\omega) \triangleq {\f}(x,\omega)+g(x)$. We consider settings where
${{\f}(\cdot,\omega)}$ is nonsmooth strongly convex/convex in $x$ for every
$\omega$, {\bf generalizing the focus} beyond the {\em structured
nonsmooth} setting where the ``stochastic part'' is smooth.  Specifically,
structured nonsmooth problems require minimizing ${f(x)} + g(x)$ where $f$
is smooth and $g$ is nonsmooth with an efficient prox evaluation (allows for capturing constrained problems over closed and convex
sets). 

Amongst the earliest avenues for resolving~\eqref{main problem} is stochastic
approximation~\cite{robbins51sa,kushner03stochastic} and has proven to be effective on a
breadth of stochastic computational problems including convex optimization problems. 
\blue{\cite{Polyak92acceleration} developed {an} averaging
scheme in  convex differentiable settings, deriving the optimal convergence
rate of $\mathcal O(1/\sqrt K)$ under classical assumptions, where $k$ is the
number of iterations. {Amongst the cleanest of early complexity
requirements for the minimization of expectation-valued $\mu$-strongly convex
and convex functions over a closed and convex set $X$ were given by 
$\left(\max
\left\{  {M^2}/{\mu^2},\|x_0-x^*\|^2 \right\} ({1}/{\epsilon})\right)$
(to ensure that $\mathbb{E}[\|x_k-x^*\|^2] \leq \epsilon$) and  $\mathcal
O({M D_X}/\epsilon^2)$ (to ensure that {the expected optimality gap is less than $\epsilon$}), respectively  where $S(x,\omega)$ denotes a measurable selection from $\partial_x \tilde{f}(x,\omega)$, $\sup_{x \in X}
\mathbb{E}[\|S(x,\omega)\|^2] \leq M^2$ and $D_X \triangleq \displaystyle
\max_{x \in X} \|x_0-x\|$.  Of these, the former was presented by
\cite{shapiro09lectures} whereas the latter is the result of an optimal robust
constant steplength SA scheme suggested by~\cite{nemirovski_robust_2009}.} When
$f$ is both $L$-smooth and $\mu$-strongly convex, an improved complexity requirement (from a constant factor standpoint) of
$\mathcal{O}(
\sqrt{{(L\|x_0-x^*\|^2}/{\epsilon)}}+{\nu^2}/(\mu \epsilon))$ was
provided by \cite{2013optimal}.    } 
This contrasts sharply with the deterministic regime where
$\mathcal{O}(\log(1/\epsilon))$ and $\mathcal{O}(1/\sqrt{\epsilon})$ steps are
required in smooth strongly convex and \jb{smooth convex regimes to compute an
\blu{$\epsilon$-accurate solution ($\epsilon$-solution in terms of mean-squared
error) and $\epsilon$-optimal solution ($\epsilon$-solution in terms of
expected sub-optimality)},} respectively. In structured nonsmooth regimes,
there has been an effort to employ the stochastic generalization of an
accelerated proximal gradient method to minimize $\uss{f+g}$ when $f$ is
smooth.  Reliant on a first-order oracle that produces a sampled gradient
$\nabla_x \f(x,\omega)$ and given an {$x_0$}, our proposed
variable sample-size accelerated proximal gradient scheme ({\bf VS-APM}) (also
see~\cite{ghadimi2016accelerated} and~\cite{jofre2017variance}) is stated as
follows where the true gradient is replaced by a sample average $\left(\nabla_x
f(x_k) + \bar{w}_{k,N_k}\right)$ with batch size $N_k$.  
\begin{align} \label{t-VS-APM}
    \begin{aligned}
	y_{k+1} & := {\bf P}_{\gamma_k g} \left(x_k - \gamma_k  \left(\nabla_x f(x_k) + \bar{w}_{k,N_k}\right)\right) \\
	x_{k+1} & := y_{k+1} + \beta_k (y_{k+1}-y_k), 
    \end{aligned}
\end{align} 
where $\bar{w}_{k,N_k} \triangleq \frac{\sum_{j=1}^{N_k} {\left(\nabla_x
\uss{\f}(x_k,\omega_{j,k})-\nabla_x f(x_k)\right)}}{N_k}$, ${\bf P}_{\eta g}(y) \triangleq \arg
\min_x\{{1\over 2} \|x-y\|^2 + \tfrac{1}{2\eta}g(x)\}$,  $\gamma_k$, and $\beta_k$ are suitably
defined steplengths. {Our approach} produces linearly convergent iterates in
strongly convex {regimes} {and achieves {an} iteration complexity of} $\mathcal{O}(1/K^2)$ in
merely convex \jb{and smooth regimes}, \usv{where $K$ is the total number of iterations}, \jb{matching the deterministic results} seen in the work by ~\cite{beck2009fast} and
~\cite{nesterov83}, .
\jb{The avenue represented by \eqref{t-VS-APM}} {has two key distinctions: (i) {\em Increasingly exact  gradients}
through increasing batch-sizes $N_k$ of sampled gradients, allowing for
progressive variance reduction; (ii) {\em Larger (non-diminishing) step-sizes}
in accordance with deterministic accelerated schemes. Collectively, (i) and
(ii) allow  for recovering fast (i.e. deterministic) convergence rates (in an
expected value sense) when $N_k$ grows sufficiently fast}. {Additionally,
such schemes have a more muted reliance on the condition number $\kappa =
L/\mu$ (in $\mu$-strongly convex and $L$-smooth regimes); \blue{specifically,  {in accelerated schemes}, such {dependence} reduces to $\sqrt \kappa$ in comparison with $\kappa$ in unaccelerated {counterparts} (cf.~\cite{nesterov14}). }}
\subsection{Prior \uvv{R}esearch}  ({\bf a}) {\em Stochastic {gradient schemes}.}  In nonsmooth
convex stochastic optimization problems, \cite{nemirovski_robust_2009}
derived an optimal rate of $\mathcal{O}(1/\sqrt{K})$ \usv{in terms of expected sub-optimality} via  an optimal constant
steplength (also see \cite{shamir2013stochastic}) whereas in strongly convex regimes, they derived a rate of
$\mathcal{O}(1/K)$ \usv{in a mean-squared sense}.  Structured nonsmooth problems (or {composite problems}) \jb{as defined by \eqref{main problem}})
have been examined extensively (cf.~\cite{lan2012optimal},\cite{ghadimi2012optimal})
and rates of $\mathcal{O}(\jb{L}/K^2 + 1/\sqrt{K})$  and $\mathcal{O}(\jb{L}/K +
1/\sqrt{K})$  were developed by~\cite{dang2015stochastic} 
{via} a mirror-descent framework for strongly convex and convex problems \jb{with $L$-smooth objectives}, respectively. 
 {In related work,~\cite{devolder14} derive oracle complexities with a deterministic oracle
of fixed inexactness, which was extended to a stochastic oracle by
\cite{dvurechensky2016stochastic}. Randomized smoothing techniques have also been employed by~\cite{yousefian2012stochastic} together with recursive steplengths (see~\cite{newton18recent} for a review) ({\bf b}) {\em Variance reduction.} In strongly
convex regimes (without acceleration), a linear rate of convergence in expected
error was first shown for variance-reduced gradient methods by ~\cite{shanbhag15budget} and revisited by \cite{jofre2017variance}, whereas similar rates were provided for extragradient methods by~\cite{jalilzadeh16egvssa}; the accelerated counterpart ({\bf VS-APM}) mutes the {dependence on $\kappa$, improving the bound} to $\mathcal{O}(\sqrt{L/\mu}\log(1/\epsilon))$.  In  smooth 
regimes, an accelerated scheme was first presented by
~\cite{ghadimi2016accelerated} where every {iteration} requires two prox
evaluations, admitting the optimal \jb{iteration complexity} and oracle complexity of
$\mathcal{O}(1/\jb{\sqrt{\epsilon}})$ and $\mathcal{O}(1/\epsilon^2)$,
respectively.~\cite{jofre2017variance} extended this scheme to allow for
state-dependent noise. 
An extragradient-based variable sample-size framework was suggested
by~\cite{jalilzadeh16egvssa} with a  rate of $\mathcal{O}(1/K)$. ({\bf c}) {\em
Smoothing techniques for nonsmooth problems.} For a subclass of deterministic nonsmooth problems, \cite{nesterov2005smooth} proved that an \redd{$\epsilon$-optimal solution} is
computable in $\mathcal{O}(1/\epsilon)$ gradient steps by applying an
accelerated method to a smoothed problem (primal smoothing with fixed smoothing
parameter). Subsequently, \cite{nesterov2005excessive}
considered primal-dual smoothing \jb{in deterministic regimes} (extended to composite problems by \cite{tran2018smooth})   with a diminishing smoothing parameter,
leading to rates of $\mathcal O(1/K^2)$ and $\mathcal O(1/K)$ for
strongly convex and convex \jb{deterministic} problems, respectively (also see \cite{boct2013double}, \cite{devolder2012double}). Adaptive
smoothing, considered by \cite{tran2017adaptive}, was shown to have an  iteration complexity of $\mathcal O(1/\epsilon)$ while Ouyang and Gray~\cite{ouyang2012stochastic} showed that smoothing-based minimization of $f+g$ where  $f(x) \triangleq \mathbb E[\f(x,\omega)]$ and $g(x) \triangleq \mathbb
E[\jb{\tilde g}(x,\omega)]$ leads to rates $\mathcal O(1/K)$ and  $\mathcal O(1/\sqrt K)$ when $\jb{\tilde g(\cdot, \omega)}$ is nonsmooth \jb{for a.e. $\omega$} whereas $\jb{\tilde f(\cdot,\omega)}$ is either strongly convex or merely convex  \jb{for a.e. $\omega$} (extended by \cite{zhong2014accelerated})\footnote{We would like to thank P.
Dvurechensky for alerting us to 
~\cite{tran2018smooth} and ~\cite{van2017smoothing}. }. 

\subsection{Gaps and \uvv{C}ontributions.} Unfortunately when  $\uss{\f(\cdot,\omega)}$ is a
nonsmooth strongly convex/convex function, \jb{stochastic subgradient schemes, subsequently defined in} ({\bf SSG}), while a
de-facto standard,  generally display poor empirical behavior, {since they
utilize  diminishing steplengths and} noisy gradients. We develop two distinct
avenues for combining {smoothing with acceleration and variance-reduction}
in strongly convex and convex regimes that ameliorate these concerns {while
achieving optimal rates}.   

{\bf(I)  ({\bf mVS-APM}) for strongly convex nonsmooth $f$.}  In Section 2, our
smoothing framework is reliant on a variable sample-size accelerated proximal
method {\bf (VS-APM)} which \usv{requires smoothness of}  $f$ while displaying linear
convergence and optimal oracle complexity. In two distinct settings, we propose applying {\bf (VS-APM)}
(or an unaccelerated variant) on the Moreau envelope of  {$F$}, denoted by
${F}_{\eta}$, where $F_{\eta}$ is $\tfrac{1}{\eta}$-smooth and
retains the minimizers of $F$. 
{{\bf(a) {Compact domains}.} {Under the assumption that \afo{ the domain of $g$ is bounded and $\mathbb{E}[\|\uss{S}(x,\omega)\|^2] \leq M^2$ for all $x \in \mathbb{R}^n$ where  \jb{$S(x,\omega)$ is a measurable selection from $\partial \f(x,\omega)$, i.e.} $\uss{S}(x,\omega) \in \partial \uss{\f}(x,\omega)$}, we show that ({\bf mVS-APM}) produces a linearly convergent sequence with \uss{an} iteration complexity \usv{of $\mathcal{O}(\log(1/\epsilon))$} in inexact gradient steps $\nabla_x
{F_{\eta}(x_k)}$, where increasingly exact gradients $\nabla_{x} F_{\eta}(x)$ are obtained by employing an ({\bf prox-SSG}) scheme. \blue{In
particular, our variance-reduced scheme endeavors to get increasingly exact
gradients {by progressively reducing the bias in the gradients} (since we {utilize  an}
increasing number of SSG steps); such a benefit does not appear in a naive implementation of SSG. Moreover,} the overall complexity in subgradient \jb{evaluations} (and consequently \jb{sample or} oracle
complexity) is $\mathcal{O}(1/\epsilon)$, matching the optimal complexity in subgradient steps achieved by ({\bf SSG})
schemes. {\bf (b) Unbounded domains.} {When domains are possibly unbounded, \jb{assuming that} $\mathbb{E}[\|S(x,\omega)\|^2] \leq
    \bar{M}^2 \|x\|^2 + M^2$, where $S(x,\omega)\in \partial \uss{\F}(x,\omega)$, the proposed (unaccelerated)  variable sample-size
proximal method ({\bf mVS-PM}) achieves an iteration complexity of
$\mathcal{O}(\log(1/\epsilon))$ (in \usv{gradient steps with $\nabla_x F_{\eta}$}) and
overall complexity in subgradient steps of $\mathcal O(1/\epsilon)$. }

\noindent {\bf (II) ({\bf sVS-APM}) for convex nonsmooth $f$.} In this setting, in Section 3, we develop an
iterative smoothing-based extension of ({\bf VS-APM}), denoted by ({\bf sVS-APM}).
By reducing the smoothing and steplength parameters at a suitable rate,  $\mathbb{E}[F(y_K)-F(x^*)] \leq
\mathcal{O}(1/K)$. Notably ({\bf sVS-APM}) produces asymptotically accurate
solutions (unlike the scheme by~\cite{nesterov2005smooth} \jb{which produces approximate solutions via a fixed smoothing parameter}) and is characterized
by the optimal oracle complexity of $\mathcal{O}(1/\epsilon^2)$. \usv{When
$f$ is convex and smooth, we} may
specialize these results to obtain an optimal rate of $\mathcal{O}(1/K^2)$  and displays an optimal \jb{sample} complexity of
$\mathcal{O}(1/\epsilon^2)$. 
When $f$ is deterministic but nonsmooth, ({\bf
s-APM}) matches the rate by ~\cite{nesterov2005smooth} but produces
asymptotically exact solutions.  Additionally, we prove that for suitable (but distinct) choices of steplength and smoothing sequences, ({\bf sVS-APM}) and ({\bf VS-APM}) produce sequences that converge
a.s. to a solution of \eqref{main problem}, a convergence statement that was
unavailable thus far, matching deterministic results by
\cite{orabona2012prisma} and \cite{boct2015variable} which leverage Moreau
smoothing; we provide a result for $(\alpha,\beta)$-smoothable functions
(see \cite{beck17fom}). 

\textbf{Notation:} A vector $x$ is assumed to be
a column vector while 
$\|x\|$ denotes the Euclidean vector norm, i.e., $\|x\|=\sqrt{x^Tx}$.
${\bf P}_{\eta g}(x)$ denotes the prox with respect to $g$ with {prox parameter} $\tfrac{1}{2\eta}$ at $x$. We  abbreviate ``almost
surely'' \usv{by} {\em a.s.} {and} $\mathbb{E}[z]$ denotes the expectation of a random
variable~$z$. {We let $X^*$ denote the set of optimal solutions of the \eqref{main problem}.} 

\begin{table}[htb]
\centering
\tiny
\begin{tabular}{|c|c|c|c|} \hline
{\sc Smooth} &   \tabincell{l}{\sc Conv. Rate \\ \sc Iter. comp.}& \tabincell{l}{\sc Prox. eval. \\ \sc Oracle comp.} & {\sc Comments} \\ \hline
\tabincell{l}{{\bf VS-APM} (2.1)\\
$f$ is $L$-smooth}&  \tabincell{c}{$\mathcal O(\rho^k)$ \\ {$\mathcal O( \sqrt \kappa\log(1/\epsilon))$}} & \tabincell{c}{$\mathcal O({\sqrt{\kappa}}\log(1/\epsilon))$\\
$\mathcal O(\kappa /\epsilon)$}& \tabincell{l}{Optimal rate and complexity} 
\\ \hline \hline
\usv{\sc Nonsmooth} &  \tabincell{l}{\sc Conv. Rate \\ Iter. comp.}& \tabincell{l}{\sc Oracle comp.}& \usv{\sc Comments} \\ \hline
{\tabincell{c}{{\bf mVS-APM} (2.3)\\
\afo{${\rm dom}(g)$ is bounded;} \\
$\mathbb{E}[\|R(x,\omega)\|^2] \leq M^2 $  \\
$\forall R(x,\omega) \in \partial \uss{\f}(x,\omega)$}}
&{\tabincell{c}{$\mathcal O(\rho^k)$ \\ $\mathcal O( \log(1/\epsilon))$}} & $\mathcal O( 1/\epsilon)$  & {\tabincell{l}{Minimize Moreau env. $F_{\eta}(x)$ via {\bf (VS-APM)} \\
Non-diminishing outer steps; \\ \afo{Approx. $\nabla_x F_{\eta}$ by {\bf (prox-SSG)} with increasing exactness;}}}  \\ \hline 
{\tabincell{c}{ {\bf mVS-PM} (2.4)\\
$\mathbb{E}[\|S(x,\omega)\|^2] \leq \bar{M}^2\|x\|^2 + M^2 $  \\
$\forall S(x,\omega) \in \partial \uss{\f}(x,\omega)$}}
&{\tabincell{c}{$\mathcal O(\rho^k)$  \\ $\mathcal O(\log( 1/\epsilon))$}} &{$\mathcal O( 1/\epsilon)$}& {\tabincell{l}{Minimize Moreau env. $F_{\eta}(x)$ via {\bf (VS-PM)}  \\
Non-dminishing outer steps;\\ 
Approx. $\nabla_x F_{\eta}(x)$ by {\bf (SSG)} with increasing exactness;
}}  \\ 
\hline
\end{tabular}

\caption{{Comparison of schemes in nonsmooth (NS) and strongly convex regimes, $\kappa=L/\mu$ and $\tilde \kappa={\mu\eta+1\over \mu\eta}$}}
\label{sc_table_nonsmooth}
\vspace{-0.4in}
\end{table}

\section{Nonsmooth \uvv{S}trongly \uvv{C}onvex \uvv{P}roblems}\label{sec. strong} In this
section,   we develop rate and complexity analysis for nonsmooth strongly
convex optimization problems via techniques that combine smoothing,
acceleration, and variance reduction. In Section~\ref{sec:2.1}, we review a
linearly convergent variance-reduced accelerated proximal scheme ({\bf
VS-APM}) for smooth stochastic convex optimization; this scheme will
serve as our subproblem solver.  In Section~\ref{sec:2.2}, we present a
Moreau-smoothed variant of ({\bf VS-APM}), referred to as ({\bf
mVS-APM}), {which relies on minimizing the Moreau envelope $F_{\eta}(x)$ of the strongly convex nonsmooth function $F(x)$} by {\bf (VS-APM)}. \blu{In Section~\ref{sec:2.3}, we then derive rate and complexity guarantees 
for ({\bf mVS-APM}) 
, where $\nabla_x
F_{\eta}(x)$ is approximated with increasing accuracy by a stochastic subgradient
({\bf SSG}) scheme. Finally, in Section~\ref{sec:2.4}, we derive analogous statements when applying an unaccelerated variable
sample-size proximal method ({\bf mVS-PM}) under possibly non-compact domains and under a (weaker) state-dependent bound on the subgradient (See Table~\ref{sc_table_nonsmooth} for a summary of findings).}

\subsection{Background on ({\bf VS-APM})} \label{sec:2.1}  

Consider \eqref{main problem} where $f, g$, and the initial point \redd{$x_0$} satisfy the following {assumption.} 
\begin{assumption}\label{fistaass2}
(i) $\afo{f}$ is a $\mu$-strongly convex function {and $g$ is a closed, convex, and proper deterministic function.} 
(ii) There exist $C,D>0$ such that $\mathbb{E} [\Vert \redd{x_0}-x^{\ast }\Vert ^{2}]\leq C$ and $\mathbb{E}[\|{F}(\redd{x_0})-{F}(x^*)\|]\leq D$, where {$F(x) \triangleq f(x)+g(x)$ and} {$x^*$ solves \eqref{main problem}}.
\end{assumption}
{In a subset of regimes, we impose an $L$-smoothness assumption on $f$.}  
{\begin{assumption}\label{smooth-f}
 \magg{The function} $\afo{f}$ is 
continuously differentiable   with
Lipschitz continuous gradient with constant $L$ i.e.  $ \|\nabla_x f(x)
-\nabla_x f(y) \| \leq L\|x-y\|$ for all $x,y \ \in \ \mathbb{R}^n.$
\end{assumption}}
 We utilize {a} variable sample-size
accelerated proximal scheme ({\bf VS-APM}), as defined in
Algorithm~\ref{nonsmooth sc scheme}, which can process such problems and
differs from a standard accelerated proximal method in that we employ an
inexact gradient $\nabla_{x} f(x_k) + \bar{w}_{k,N_k}$ where the bound on the second moment of $\bar{w}_{k,N_k}\triangleq \nabla_x f(x_k) - \frac{\sum_{k=0}^{N_k}\nabla_x f(x_k,\omega_k)}{N_k}$ is {diminishing} with $k$, a consequence of using variance reduction.
\begin{algorithm}[htbp]
\caption{\bf {Variable} sample-size accelerated proximal method (VS-APM)}
\em
\label{nonsmooth sc scheme}
(0) Given $\redd{x_0}$, \redd{$y_0=x_0$}, {$\kappa$}, and  positive
sequences $\{\gamma_k, N_k\}${\em ;} Set $\lambda_1\in \left(1,{\sqrt \kappa}\ \right]$;  $k := 1$ {\em ;}  \\
(1) $y_{k+1}:= {{\bf P}_{\gamma_k g}\left( x_k - \gamma_k \left(\nabla_x f(x_k) + \bar{w}_{k,N_k}\right)\right)}$ {\em ;} \\
(2) $\lambda_{k+1}:={{1\over 2} \left(1- \frac{\lambda_k^2}{\kappa}+\sqrt{\left(1-\frac{\lambda_k^2}{\kappa}\right)^2+4\lambda_k^2}\right)}$ {\em ;} \\
(3) $x_{k+1}:=y_{k+1}+\left(\frac{(\lambda_{k}-1)\left(1-\frac{1}{4\kappa}\lambda_{k+1}\right)}{\left(1-\frac{{1}}{4\kappa}\right)\lambda_{k+1}}\right)\left(y_{k+1}-y_{k}\right)$ {\em ;} \\
(4) If $k > K$, then stop{\em ;} else $k := k+1${\em ;} return
to (1).
\end{algorithm}

We outline the assumptions on the first and second moments of $\bar{w}_k$. 
\begin{assumption}\label{ass_error2}
(i) {\bf (Conditional boundedness of second moments)} There exists $\nu>0$ such that {$\mathbb{E}[\|\bar w_k\|^2\mid \mathcal{H}_k] \leq {\nu^2\over N_k}$} holds a.s.  for all $k$ and  $\mathcal{H}_k
	\triangleq \sigma\{x_0, x_1, \hdots, x_{k-1}\}$.  (ii) {\bf (Conditional unbiasedness of first moments)} $\mathbb{E}[w_k \mid \mathcal{H}_k] = 0$ holds a.s., where $w_k \triangleq \nabla_x f(x_k,\omega_k) - \nabla_x f(x_k)$. 
	\end{assumption}
{\bf (VS-APM)} can be shown to achieve linear convergence
akin to that by~\cite{nesterov14} by combining inexact gradients where the
inexactness is driven to zero by increasing the sample-size in estimating the
gradients. This avenue also allows for achieving the optimal oracle complexity
to obtain an \redd{$\epsilon$-accurate} solution. These differences lead to a slightly modified
set of update rules in contrast with that developed by~\cite{nesterov14} and
requires that $\gamma_k = 1/2L$ rather than $1/L$.  This scheme serves as a
subproblem solver in subsequent sections and we now state  a lemma and the associated 
complexity statement of ({\bf VS-APM}). The proof is similar to that by
\cite{nesterov14} and is in the Appendix.  Importantly, this scheme
allows for a possibly {\bf biased} estimate of the gradient.
\begin{lemma} \label{prop-err-bd}
Suppose Assumptions \ref{fistaass2}, \ref{smooth-f} {and \ref{ass_error2}(i)} hold.  Consider the iterates
	generated by {\bf (VS-APM)}, where $\gamma_k = \tfrac{1}{2L}$  for all $k \geq
	0$, $\kappa=\tfrac{L}{\mu}$, and $\bar \alpha=\tfrac{1}{{2}\sqrt \kappa}$
Then
	the following holds for all $K$.
 \begin{align}\label{bound_ac-VSSA_strong}
&  \mathbb{E}[F(y_{K})-F^*]\leq \left(D+\tfrac{\mu}{2}C^2\right)\left(1-\bar \alpha\right)^{K-1} + \sum_{i=0}^{{K}-1}{{\tfrac{\left(1-\bar \alpha\right)^{i}\left({2\over L}+{1\over \mu}\right)\nu^2}{N_{k-i}}}} 
  + \sum_{i=0}^{{K}-2}\tfrac{\left(1-\bar \alpha\right)^{i+1}\left({2\over L}+{1\over \mu}\right)\nu^2}{N_{k-i-1}}.
\end{align}
\end{lemma}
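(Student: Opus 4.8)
The plan is to follow the estimate-sequence (equivalently, Lyapunov-function) argument for Nesterov's accelerated method in the strongly convex setting, as in~\cite{nesterov14}, but to carry the gradient inexactness $\bar w_{k,N_k}$ through every inequality so that its conditionally mean-zero part drops out under $\mathbb{E}[\cdot\mid\mathcal{F}_k]$ while only its second moment contributes an additive error. First I would record the two facts that drive the whole argument. (a) By $L$-smoothness of $f$, the step $y_{k+1}=x_k-\gamma_k(\nabla_x f(x_k)+\bar w_{k,N_k})$ with $\gamma_k=1/(2L)$ obeys a descent inequality of the form $f(y_{k+1})\le f(x_k)-\tfrac{c_1}{L}\|\nabla_x f(x_k)\|^2 + (\text{terms linear in }\bar w_{k,N_k}) + \tfrac{c_2}{L}\|\bar w_{k,N_k}\|^2$ with $c_1,c_2>0$; the choice $\gamma_k=1/(2L)$ rather than $1/L$ is precisely what leaves enough curvature slack to absorb the noise-induced term. (b) $\mathbb{E}[\bar w_{k,N_k}\mid\mathcal{F}_k]=0$ and $\mathbb{E}[\|\bar w_{k,N_k}\|^2\mid\mathcal{F}_k]\le \nu^2/N_k$, the latter from Assumption~\ref{ass_error2} together with independence of the $N_k$ samples used to form $\bar w_{k,N_k}$.

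Next I would introduce the Lyapunov function $V_k \triangleq f(y_k)-f^* + \tfrac{\mu}{2}\|z_k-x^*\|^2$, where $z_k$ is the auxiliary estimate-sequence center implicitly defined by the momentum parameters $\lambda_k$ in steps (2)--(3) (equivalently, $V_k$ is assembled from the quadratic lower model maintained by the scheme). Combining the descent inequality (a) with $\mu$-strong convexity of $f$ used to lower-bound $f(x_k)$ and $f(y_k)$, and invoking the defining recursion for $\lambda_{k+1}$ (chosen so that $1/\lambda_{k+1}^2=(1-1/\lambda_{k+1})/\lambda_k^2$) together with the exact extrapolation weight in step (3), I would derive the one-step estimate
\[ \mathbb{E}[V_{k+1}\mid\mathcal{F}_k]\le (1-\bar\alpha)\,V_k + \tfrac{2\nu^2}{L N_k}, \quad \text{where } \bar\alpha=\tfrac{1}{\sqrt{\kappa}}=\sqrt{\mu/L}. \]
All cross terms linear in $\bar w_{k,N_k}$ vanish because each carries an $\mathcal{F}_k$-measurable coefficient multiplying a conditionally mean-zero vector; the surviving $\|\bar w_{k,N_k}\|^2$-type contributions, which appear both at the $y$-step and through the extrapolated point $x_{k+1}$, are each bounded in conditional expectation by a constant multiple of $\nu^2/(LN_k)$.

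Finally I would unroll the recursion: taking total expectations and iterating $\mathbb{E}[V_{k+1}]\le(1-\bar\alpha)\mathbb{E}[V_k]+\tfrac{2\nu^2}{LN_k}$ from $k=1$ to $K-1$ gives $\mathbb{E}[V_K]\le(1-\bar\alpha)^{K-1}V_1+\sum_i(1-\bar\alpha)^i\tfrac{2\nu^2}{LN_{K-i}}$. Bounding $V_1=f(y_1)-f^*+\tfrac{\mu}{2}\|x_1-x^*\|^2\le D+\tfrac{\mu}{2}C^2$ via Assumption~\ref{fistaass2}(ii), and dropping the nonnegative distance term so that $\mathbb{E}[f(y_K)-f^*]\le\mathbb{E}[V_K]$, yields the claimed estimate; the two geometric sums with shifted indices in~\eqref{bound_ac-VSSA_strong} simply reflect that the per-step error enters at two stages of the accelerated update, and one keeps them separate rather than merging them.

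The main obstacle I expect is the bookkeeping in the one-step estimate: forcing the contraction factor to come out exactly as $1-\bar\alpha$ requires the recursion for $\lambda_{k+1}$ and the extrapolation weight in step (3) to interact in a precisely coordinated way (the delicate core of Nesterov's argument), and one must simultaneously check that the $\gamma_k=1/(2L)$ slack dominates \emph{every} noise-induced term — those from the gradient step and those created in passing to $x_{k+1}$ — after conditioning on $\mathcal{F}_k$. Handling the inexact gradient is otherwise routine, provided one is disciplined about conditioning on $\mathcal{F}_k$ before invoking the variance bound from Assumption~\ref{ass_error2}.
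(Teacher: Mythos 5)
Your plan follows essentially the same route as the paper's appendix proof: a Nesterov-style estimate-sequence/potential argument (as in Lemmas~\ref{lemma_ac-vssa}--\ref{bound lambda}) in which the conditionally mean-zero part of $\bar w_{k,N_k}$ vanishes under $\mathbb{E}[\cdot\mid\mathcal{F}_k]$, its second moment is absorbed through the slack created by $\gamma_k = 1/(2L)$ and accumulates geometrically with factor $(1-\bar\alpha)$, and the initial potential is bounded by $D+\tfrac{\mu}{2}C^2$. The only presentational difference is that the paper tracks two coupled quantities — the estimate-sequence values $\phi_k(x^*)$ and the noise accumulator $p_k$ with $\phi_k^*\geq f(y_k)-p_k$, which is exactly where the two shifted geometric sums come from — whereas you compress this into a single potential recursion while correctly noting that the per-step noise enters at those two stages, so this is bookkeeping rather than a different idea.
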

The following theorem characterizes the iteration and oracle complexity of {\bf (VS-APM)}.
\begin{theorem}[{\bf Rate and oracle  complexity of {\bf (VS-APM)} {under biased oracles}}]\label{th-rate-itercomp-sc}
Suppose Assumptions~\ref{fistaass2},~\ref{smooth-f}, and \ref{ass_error2}(i)   hold. Consider
the iterates generated by {\bf (VS-APM)}, where $\gamma_k \triangleq
\tfrac{1}{2L}$, $N_k \triangleq \lfloor \rho^{-k}\rfloor$, $\theta \triangleq
\left(1-\tfrac{1}{{2}\sqrt{\kappa}}\right)$,  $\rho \triangleq
\left(1-{1\over {2}a\sqrt{\kappa}}\right)$  for all $k \geq 0$ and $a > 2$.
 \begin{align}\label{bd-K} \hspace{-0.4in}
\mbox{(i) For all $K$, we have that }
\mathbb{E}[{F}(y_K)-{F}^*]
 \leq \tilde C \rho^{K-1}
\mbox{ where } \tilde C   \triangleq \left(D+\tfrac{\mu}{2}C^2\right) & +\tfrac{4 \nu^2}{\mu}+\tfrac{2\nu^2\sqrt\kappa}{\mu}.  \end{align}
{In addition,{\bf(VS-APM)} needs $\mathcal{O}(\sqrt{\kappa} \log (\tfrac{1}{\epsilon}))$ steps to obtain an \redd{$\epsilon$-accurate} solution, i.e. $\mathbb{E}[{F}(y_{{K+1}})-{F}^*]\leq \epsilon$.} \\ 
(ii) To compute an \redd{$\epsilon$-accurate} solution, {$\sum_{k=1}^K N_k \leq  {\left(\left(D+ \frac{\mu C^2}{2} \right) + \frac{4 \nu^2 }{ \mu}+{2\nu^2\sqrt\kappa\over \mu}\right)} \mathcal O\left({\sqrt \kappa\over \epsilon}\right).$}  
\end{theorem}
We know of no other result for variance-reduced accelerated proximal schemes in strongly convex (or even convex) smooth regimes that allows for biased oracles. For instance, {\cite{scholschmidt2011convergence}  impose unbiasedness in strongly convex regimes.}  
Next, we show that by adding the unbiasedness requirement, i.e. $\mathbb{E}[w_k \mid {\mathcal H_k}] = 0$ a.s. for all $k$, improves the constants in these bounds.

\begin{corollary}[{\bf Rate and oracle  complexity of {\bf (VS-APM)} under unbiased {oracles}}]
Suppose Assumptions \ref{fistaass2}, \ref{smooth-f}, and \ref{ass_error2}(i,ii)  hold. Consider the iterates
generated by {\bf (VS-APM)}, {where $\gamma_k \triangleq \tfrac{1}{2L}$, $N_k
\triangleq \lfloor \rho^{-k}\rfloor$, $\theta \triangleq
\left(1-\tfrac{1}{{2}\sqrt{\kappa}}\right)$,  $\rho \triangleq
\left(1-{1\over {2}a\sqrt{\kappa}}\right)$  for all $k \geq 0$ and $a > 2$.}
 \begin{align}\label{bd-K} \hspace{-0.4in}
\mbox{(i) For all $K$, we have that }
\mathbb{E}[{F}(y_K)-{F}^*]
 \leq \tilde C \rho^{K-1}
\mbox{ where } \tilde C   \triangleq \left(D+\tfrac{\mu}{2}C^2\right) +\tfrac{4 \nu^2}{\mu}.  \end{align}
{In addition, {\bf (VS-APM)} needs $\mathcal{O}(\sqrt{\kappa} \log (1/\epsilon))$ steps to obtain an \redd{$\epsilon$-accurate} solution.}\\ 
(ii) To compute an \redd{$\epsilon$-accurate} solution, $\sum_{k=1}^K N_k \leq  \left(\left(D+ \tfrac{\mu C^2}{2} \right) + \tfrac{4 \nu^2 }{ \mu}\right) \mathcal O\left({\sqrt \kappa\over \epsilon}\right).$  
\end{corollary} 

\blue{The application of ({\bf VS-APM}) is afflicted by the need for the $L$-smoothness of $f$ as well as the availability of $L$, the Lipschitz constant. Naturally, in many settings, the problem may not be smooth and even if $L$-smoothness holds, an estimate of $L$ may be unavailable. Consequently to broaden the reach of the scheme, an approach that obviates the need for $L$ or the imposition of the smoothness assumption is necessitated. 
This prompts the subsequent smoothed scheme ({\bf mVS-APM}). \blu{This scheme can
    always be implemented if  the strong convexity modulus (denoted by $\mu$) is known but the function is either nonsmooth or smooth with an unknown Lipschitz constant $L$. It is worth noting that estimating $\mu$ is challenging and if $\mu$ is indeed unknown, then in Section \ref{sec. convex}, we introduce an iteratively smoothed VS-APM (sVS-APM) method which {necessitates neither} the knowledge of the Lipschitz constant $L$, {nor} the smoothness of $f$,  {nor} the strong convexity modulus $\mu$. 
}}    
\subsection{A Moreau-smoothed \uvv{I}nexact \uvv{A}ccelerated \uvv{F}ramework ({\bf mVS-APM})}\label{sec:2.2}
When $\afo{\uss{\f}(\cdot,\omega)}$ is a nonsmooth {strongly} convex function {for
almost every $\omega$}, then the standard approach lies in utilizing stochastic
subgradient schemes {\bf (SSG)} where convergence relies on choosing square-summable
but non-summable steplength sequences. The choice of the parameters in such
sequences can have debilitating impact on performance in some settings
(cf.~\cite{shapiro09lectures}).  Specifically, {while choosing
$\gamma_k$ as $\tfrac{1}{\mu k}$ minimizes the mean-squared error but
over-estimating $\mu$ can have catastophic impact as seen in~\cite[Sec 5.9,
Ex.~5.36]{shapiro09lectures}. More generally, such choices
are often characterized by poor asymptotic behavior, a consequence that arises
in part from the diminishing nature of steplength sequences and the noisy subgradients.} We consider a
{\bf distinct avenue} {reliant} on {minimizing  the
Moreau envelope of a closed, convex, and proper function $F$ (cf.~\cite{moreau1965proximite}), denoted by $F_{\eta}(x)$ and defined next.} 
\begin{align}\label{prox moreau}
{F}_\eta(x) \triangleq \min_{u} \ \left\{{F}(u)+{1\over 2\eta}\|u-x\|^2 \right\}.
\end{align}
Notably, this smoothing {\bf retains} the {minimizer} {of $F(x)$} when $F$ is strongly convex. 
\begin{lemma}\cite[Lemma~2.19]{planiden2016strongly}\label{feta}
Consider a convex, closed, and proper function ${F}$ and its Moreau envelope
${F}_{\eta}(x)$. Then the following hold: (i) $x^*$ is a minimizer of ${F}$ over
$\mathbb{R}^n$ if and only if $x^*$ is a minimizer of ${F}_{\eta}(x)$; (ii) $F$
is $\mu$-strongly convex on $\mathbb{R}^n$ if and only if ${F}_{\eta}$ is
$\bar{\mu}$-strongly convex on $\mathbb{R}^n$ where $\bar{\mu} \triangleq \tfrac{\mu}{\eta\mu+1}$. 
\end{lemma}
{ Consequently, we minimize the $\bar{\mu}$-strongly convex
and ${1\over \eta}$-smooth function ${F}_{\eta}$, which is not
necessarily an easy task since computing $\nabla_x {F}_{\eta}(x)$
necessitates solving nonsmooth stochastic optimization problems.} We adopt
an inexact accelerated proximal scheme for minimizing
${F}_{\eta}$. But in contrast with ({\bf SSG}) schemes {applied to minimizing $F$}, we 
control the {smoothness of the outer problem by choosing $\eta$ and utilize
{\bf (i) larger non-diminishing steplengths}, {\bf (ii) acceleration}, and {\bf (iii)  increasingly exact
gradients}}, all of which are distinct from ({\bf SSG}), as shown next.  
\begin{align}\notag
\overbrace{\left[\begin{aligned} 
x_{k+1} &:= x_k - \gamma_k u_k \\
u_k & \in \afo{\partial {\tilde F}(x_k,\omega_k).}
\end{aligned} \qquad (\mbox{\bf SSG}) \right]}^{\scriptsize \gamma_k \to 0, \quad u_k \mbox{ is noisy subgradient.} } \qquad 
\overbrace{\left[\begin{aligned} 
y_{k+1} & := x_k - \gamma_k(\nabla_x {{F}_{\eta}} (x_k) + {\bar w}_{k,N_k}),  \\
x_{k+1} & := y_{k+1} + \beta_k (y_{k+1}-y_k).
\end{aligned} \, \quad  (\mbox{\bf {mVS-APM}})\right]}^{\scriptsize \mbox{Non-diminishing $\gamma_k$ + increasingly exact gradients + Acceleration}}
\end{align}
Importantly, $\nabla_x {F}_{\eta}(x_k) + {\bar w}_{k,N_k}$ represents an {\em approximation} of the gradient of the Moreau envelope. The true gradient of the Moreau envelope ${F}_{\eta}(x)$ is defined as $\nabla_x {F}_{\eta}(x) = \tfrac{1}{\eta}(x-\mbox{prox}_{\eta {F}}(x))$, where 
\begin{align}\label{def-prox} \mbox{prox}_{\eta {F}}(x) \triangleq \mbox{arg} \min_{u} \left\{ {F}(u) + {1\over 2\eta} \|x-u\|^2\right\}. \end{align}  
But $\mbox{prox}_{\eta {F}}(x)$ cannot be computed in finite time since ${F}$
is a nonsmooth \uss{expectation-valued} convex function. Instead, via stochastic
approximation, we compute an approximate solution of $\mbox{prox}_{\eta
{F}}(x)$, denoted by $\widehat{\mbox{prox}}_{\eta {F}}(x)$, implying the inexact
gradient of ${F}_{\eta}(x)$ is given by $\tfrac{1}{\eta}(x-\widehat{\mbox{prox}}_{\eta {F}}(x))$. In Algorithm~\ref{nonsmooth sc scheme}, the inexact gradient $\nabla_x {F}_{\eta}(x_k) + {\bar w}_{k,N_k}$ is defined as 
\begin{align}
\nabla_x {F}_{\eta}(x_k) + {\bar w}_{k,N_k}  =  \tfrac{1}{\eta}(x_k - \mbox{prox}_{\eta {F}}(x_k)) + \overbrace{\tfrac{1}{\eta}(\mbox{prox}_{\eta {F}}(x_k)-\widehat{\mbox{prox}}_{\eta {F}}(x_k))}^{\triangleq {\bar w}_{k,N_k}}.
\end{align}

{We now proceed to develop {\bf (mVS-APM)} for compact domains in Section~\ref{sec:2.3} and then weaken compactness requirements in Section~\ref{sec:2.4} for an unaccelerated variant.}
\subsection{{Linear \uvv{C}onvergence of ({\bf mVS-APM}): \uvv{C}ompact \uvv{D}omains}} \label{sec:2.3}
When $F(x) = \mathbb{E}[\uss{\f}(x,\omega)]+g(x)$, 
$\uss{\mbox{prox}_{\eta F}}(x)$, defined as \eqref{def-prox},  is
generally unavailable in closed-form and requires solving a strongly convex
nonsmooth stochastic optimization problem exactly.   Instead, one may
solve \eqref{prox moreau} {\bf inexactly} using {({\bf prox-SSG}), a slightly extended variant of  ({\bf SSG}) scheme~\cite{shapiro09lectures}. \blue{In particular, we propose ({\bf mVS-APM}) with the following update rules for $k\geq 1$,
\begin{subequations}\label{alg:mVS-SSG}
\begin{align}
y_{k+1} & := x_k - \frac{\gamma_k}{\eta}(x_k-\widehat{\mbox{prox}}_{\eta {F}}(x_k)),  \\
x_{k+1} & := y_{k+1} + \beta_k (y_{k+1}-y_k), 
\end{align}
\end{subequations}
\blu{where $\widehat{\mbox{prox}}_{\eta {F}}(x_k)$ is obtained by taking finite number of steps of ({\bf prox-SSG}) with  a sample size of one at each step} and having the following update rule for $j=0,\hdots,N_k-1$,
\begin{align}
\tag{{\bf prox-SSG}}
z_{k,j+1} := {\bf P}_{\eta/j, g}(z_{k,j} - \tfrac{\eta}{j} u_j),\quad u_j  \in \partial {\uss{\f}}(z_{k,j},\omega_j).
\end{align}}}%
\afo{Next, we state our assumptions and present the main result of this section. The constant in the rate and complexity bounds is dependent on $\tilde \kappa$; \uss{unlike, the condition number $\kappa$ in smooth regimes, $\tilde \kappa$ is user-specified and can be relatively small. For instance,  $\tilde \kappa = 2$ when $\eta = 1/\mu$}.} \jb{We employ a measurable selection from $\partial \f(x,\omega)$ as a stochastic subgradient in ({\bf SSG}) and impose the following assumption.} 

\afo{ \begin{assumption}\label{assum_sgd_general_1}
    \jb{For any $x \in \mathbb{R}^n$, consider a measurable selection} $R(x,\omega) \in \partial \uss{\f}(x,\omega)$.
        {(Unbiasedness).  We have that $\mathbb{E}[R(x,\omega)] = 
R(x) \in \partial f(x).$}
{(Subgradient boundedness).} There exists $M>0$  {such that for any $x$}, $\mathbb E[{\|R(x,\omega)\|^2}]\leq  M^2$. (Compact domain). The function $g$ has a compact domain, i.e., there exists $\Delta>0$ such that $\|x\|\leq \Delta$ for any $x\in {\rm dom}(g)$.
\end{assumption}}
 

\begin{theorem}[{\bf Rate and oracle  complexity of ({\bf mVS-APM})}]\label{th-rate-itercomp-sc-sgd}
{Suppose Assumptions~\ref{fistaass2} and \ref{assum_sgd_general_1} hold. 
Consider the iterates generated by {\bf (VS-APM)} applied
on $F_{\eta}(x)$ defined as \eqref{prox moreau} where $\theta \triangleq \left(1-{1
\over {2}\sqrt{\tilde \kappa}}\right)$,  $\rho \triangleq \left(1-{1\over
{2}a\sqrt{\tilde \kappa}}\right)$,  $\tilde
\kappa={\mu\eta+1\over \mu\eta}$, $a > 2$, and $\gamma_k =
\eta/{2}$, $N_k = \lfloor \rho^{-k}\rfloor$  for all $k \geq 0$. Then the following hold for $Q
\triangleq \max\left\{{\eta^2 M^2},4\Delta^2\right\}$.}\\
\noindent(i) {\bf (Rate).}  For all $K\geq 1$, we have that 
 \begin{align}\label{bd-K-sgd}
\mathbb{E}[\|y_K-x^*\|^2]
 \leq \widehat C \rho^{K-1}
\mbox{ where } \widehat C  \triangleq {2D \eta \tilde{\kappa}+ C^2 + {8 \tilde{\kappa}^{5/2}Qa}}. \end{align}
\noindent (ii) {\bf (\afo{Outer iteration complexity}).}  { The  iteration complexity of {\bf (mVS-APM)} in gradient steps {$($of $\nabla_x f_{\eta}(x_k))$}  to obtain an \redd{$\epsilon$-accurate} solution is $\mathcal O(\sqrt{\tilde \kappa} \log(\widehat C/\epsilon))$.} \\
\noindent (iii) {\bf (\afo{Oracle  complexity})}. {To compute $y_{K}$ such that $\mathbb{E}[\|y_K-x^*\|^2]\leq \epsilon$, the complexity of SSG steps is bounded as follows: $\sum_{k = 1}^K N_k \leq {2a^2\sqrt{\tilde \kappa} \widehat C\over (a-1)\epsilon}= \mathcal O(1/\epsilon).$}
\end{theorem}
\begin{proof}   {\bf (i)}  Recall that {${F}_\eta$} is $\tfrac{\mu}{
\mu\eta+1}$-strongly convex with $\tfrac{1}{\eta}$-Lipschitz continuous
gradients. At iteration $k$ of Algorithm~\ref{nonsmooth sc scheme}, ({\bf
prox-SSG}) \redd{with single sampling} can be used to inexactly solve \afo{$\displaystyle \min_{u}\left\{\mathbb
E[\uss{\f}(u,\omega)]+g(u)+\tfrac{1}{2\eta}\|u-x_k\|^2 \right\}$}. In particular, let
$\{z_{k,j}\}_{j=1}^{N_k}$ be the sequence generated by {({\bf prox-SSG})}
starting from  $z_{k,0}=x_k$ and {let} $z^*_k$ {denote the unique} optimal
solution of the subproblem.  Therefore, at step (1) of Algorithm
\ref{nonsmooth sc scheme}, $\bar w_{k,N_k}=\tfrac{1}{\eta} (z^*_k- z_{k,N_k})$ {and} by
the convergence rate of ({\bf prox-SSG})\cite{shapiro09lectures}, $\mathbb E[\|\bar w_{k,N_k}\|^2]\leq
{\bar Q_k\over {\eta^2} N_k}$, where $\bar Q_k \triangleq \max\left\{{\eta^2 M^2},\|
z_{k,0}- z^*_k\|^2\right\}\leq Q$, since $\|
z_{k,0}- z^*_k\|^2\leq 4\Delta^2$.  The results in Lemma \ref{prop-err-bd} hold
when $F(x)$ is replaced by $F_\eta(x)$, by letting $L=\tfrac{1}{\eta}$, replacing $\mu$ by $\tfrac{\mu}{
\mu\eta+1}$, $\nu^2$ by $\tfrac{Q}{\eta^2}$, and setting $\bar \alpha=1/({2}\sqrt {\tilde \kappa})$, where $\tilde
\kappa={\mu\eta+1\over \eta\mu}$:
\begin{align}\label{bound_ac-VSSA_strong-nonsmooth}
  \mathbb{E}[F_\eta(y_{K})-{F}_\eta^*]\leq \left(D+\tfrac{\mu}{2(\mu\eta+1)}C^2\right)\Big(1-\bar \alpha\Big)^{K-1} + \sum_{i=0}^{K-1}\tfrac{\left(1-\bar \alpha\right)^{i}{\left(2\eta+\tfrac{1}{\mu}\right)Q}}{{\eta^2N_{\afo{K}-i}}} 
  + \sum_{i=0}^{K-2}\tfrac{\left(1-\bar \alpha\right)^{i+1}{\left(2\eta+\tfrac{1}{\mu}\right)Q}}{{\eta^2N_{\afo{K}-i-1}}}.
\end{align} 
From Lemma \ref{feta}, $x^*$ is minimizer of function {${F}$} if and only if $x^*$ is a minimizer of function {${F}_{\eta}$}. Since {${F}_\eta$} is ${\mu\over \mu\eta+1}$-strongly convex, ${\mu\over 2(\mu\eta+1)}\|y_K-x^*\|^2\leq {F}_{\eta}(y_K)-{F}_{\eta}(x^*)$, implying \eqref{bound_ac-VSSA_strong-nonsmooth} can be written as 
  \begin{align}\label{bound_ac-VSSA_strong-nonsmooth2}
 \tfrac{\mu\mathbb{E}[\|y_K-x^*\|^2]}{2(\mu\eta+1)} \leq \left(D+\tfrac{\mu}{2(\mu\eta+1)}C^2\right)\left(1-\bar \alpha\right)^{K-1} + \sum_{i=0}^{K-1}\tfrac{\left(1-\bar \alpha\right)^{i}{\left(2\eta+\tfrac{1}{\mu}\right)Q}}{{\eta^2N_{\afo{K}-i}}} 
  + \sum_{i=0}^{K-2}\tfrac{\left(1-\bar \alpha\right)^{i+1}\left(2\eta+\tfrac{1}{\mu}\right)Q}{\eta^2N_{\afo{K}-i-1}}.
\end{align} 
 From \eqref{bound_ac-VSSA_strong-nonsmooth}, by definition of $\theta$ {and recalling the increasing nature of $\{N_k\}$}, we may claim the following:
\begin{align}\label{bound N_k nonsmooth}
 \tfrac{\mu\mathbb{E}[\|y_K-x^*\|^2]}{2(\mu\eta+1)}& \leq (D+\tfrac{\mu}{2(\mu\eta+1)}C^2)\theta^{K-1}+ \sum_{j=0}^{K-1} \theta^{j} \tfrac{\left(2\eta+\tfrac{1}{\mu}\right)Q}{\eta^2N_{K-j-1}}+\sum_{j=0}^{K-1}\theta^{j+1}\tfrac{\left(2\eta+\tfrac{1}{\mu}\right)Q}{{\eta^2N_{K-j-1}}} \notag \\ 
&= (D+\tfrac{\mu}{2(\mu\eta+1)}C^2)\theta^{K-1}+ \sum_{j=0}^{K-1} \tfrac{\theta^j (1+\theta)\left(2\eta+\tfrac{1}{\mu}\right)Q}{
	{\eta^2N_{K-j-1}}} \notag \\
& \overset{\scriptsize (1+\theta) \leq 2}{\leq} (D+\tfrac{\mu}{2(\mu\eta+1)}C^2)\theta^{K-1}+ \sum_{j=0}^{K-1}  \tfrac{2\theta^{j}\left(2\eta+\tfrac{1}{\mu}\right)Q}{{{\eta^2N_{K-j-1}}}}.
 \end{align}
If $ N_{K-j-1}=\lfloor\rho^{-(K-j-1)}\rfloor$, by using Lemma \ref{fistabound for floor}, we have the following:
\begin{align}\label{b_ac2 nonsmooth}
& \quad  \sum_{i=0}^{K-1}{\tfrac{2\theta^{j} (2\eta+1/\mu)Q}{\eta^2\lfloor \rho^{-(K-j-1)}\rfloor}}
 \leq \sum_{i=0}^{K-1}\tfrac{\theta^{j} \left(2\eta+\tfrac{1}{\mu}\right)Q}{{\eta^2 \rho^{-(K-j-1)}}}  \leq \tfrac{\left(2\eta+\tfrac{1}{\mu}\right)Q \rho^{K-1}}{\eta^2} \sum_{i=0}^{K-1}{ \left(\tfrac{\theta}{\rho}\right)^{i} } 
 \leq \left(\tfrac{{(2\eta+\tfrac{1}{\mu})Q}\rho}{\eta^2( \rho-\theta)}\right){\rho^{K-1}}.
\end{align}
By substituting \eqref{b_ac2 nonsmooth} in \eqref{bound N_k nonsmooth} and {using $\tfrac{\rho}{\rho-\theta}  = {\tfrac{1-\tfrac{1}{2a \sqrt{\tilde \kappa}}}{\tfrac{1}{2 \sqrt{\tilde \kappa}}-{\tfrac{1}{2a \sqrt{\tilde \kappa}}}} = \tfrac{(2a \sqrt{\tilde \kappa}-1)}{a-1}}  \leq 2a\sqrt{\tilde\kappa}$}, \eqref{bound N_k nonsmooth} becomes 
\begin{align}\label{bound for K nonsmooth}
\mathbb{E}[\|y_K-x^*\|^2]
  & \leq\tfrac{2(\mu\eta+1)}{\mu} \left(D+\tfrac{\mu}{2(\mu\eta+1)}C^2\right)\theta^{K-1}  + \left( \tfrac{2(\mu\eta+1)}{\mu}\right)
 \tfrac{2}{\eta^2}\left(2\eta+\tfrac{1}{\mu}\right)Qa\sqrt{\tilde \kappa}{\rho^{K-1}}  \notag \\
& \leq 	{\left(\left(D \tfrac{2 (\eta\mu+1)}{
\mu}\right)+C^2  + \left(8\left(\tfrac{1+\eta \mu }{\eta\mu}\right)^2Qa\right)\sqrt{\tilde \kappa} \right)} \rho^{K-1} \notag \\
& = \widehat{C} \rho^{K-1}, \mbox{ where } \widehat{C} \triangleq  {2D \eta \tilde{\kappa}+ C^2 + {8 \tilde{\kappa}^{5/2}Qa}}.
\end{align}
\noindent {\bf (ii)} {We may derive the number of gradient steps ${K}$ {(of $\nabla_x f_{\mu}$)} to obtain an \redd{$\epsilon$-accurate} solution:} 
{\begin{align*}
	\tfrac{1}{\rho} & = \tfrac{1}{(1-\tfrac{1}{{2}a \sqrt{{\tilde \kappa}}})} 
			 = \tfrac{{2}a \sqrt{{\tilde \kappa}}}{({2}a\sqrt{{\tilde \kappa}}-1)} \implies \tfrac{ \log(\widehat C) - \log(\epsilon)} {\log(1/\rho)} \leq \tfrac{ \log(\widehat C) - \log(\epsilon)} {(1-\rho)} = {{(2a\sqrt{{\tilde \kappa}})} \log({\widehat C}/\epsilon) \leq K}.
\end{align*}}
\noindent {\bf (iii)} {To compute a vector $y_{K}$} satisfying $\mathbb{E}[\|y_K-x^*\|^2]\leq \epsilon$, we have $\widehat C{\rho}^{K}\leq \epsilon$ implying that  
{$K = \lceil \log_{(1/  {\rho})}(\widehat C/\epsilon)\rceil \leq 1+\log_{(1/  {\rho})}(\widehat C/\epsilon)$.}
To obtain the oracle complexity, we require $\sum_{k=1}^{K} N_k$ gradients. If $N_k=\lfloor \rho^{-k}\rfloor\leq \rho^{-k}$, we obtain the following since $(1-\rho) = (1 \slash ({2}a \sqrt{{ {\tilde \kappa}}}))$.
{\begin{align}\label{rho tag}
 \quad \sum_{k=1}^{K} \rho^{-k} 
 &\leq \tfrac{\left(\tfrac{1}{\rho}\right)^{2+K}}{\left(\tfrac{1}{{\rho}} -1\right)} \leq \tfrac{\left(\tfrac{1}{\rho}\right)^{3+\log_{1/\rho}(\widehat{C}/\epsilon)}}{\left(\tfrac{1}{{\rho}} -1\right)}  \leq \tfrac{\widehat C}{\rho^2(1-{\rho})\epsilon}
 = \tfrac{ 2a \sqrt{{\tilde \kappa}} \widehat C}{\rho^2\epsilon}. \end{align}}
Note that $\rho=1-{1\over {2}a\sqrt{ {\tilde \kappa}}}$, implying that
\begin{align*}
 \rho^2 & = 1-2/({2}a\sqrt {\tilde \kappa})+1/({4}a^2{\tilde \kappa})= \tfrac{{4}a^2{\tilde \kappa}-{4}a\sqrt {\tilde \kappa}+1}{{4}a^2{\tilde \kappa}}\geq \tfrac{ {4}a^2{\tilde \kappa}-{4}a{\tilde \kappa}}{{4}a^2{\tilde \kappa}}=\tfrac{(a^2-a)}{a^2}\\
  \implies & \tfrac{\sqrt{\tilde \kappa}}{\rho^2}\leq \tfrac{a^2 \sqrt {\tilde \kappa}}{(a^2-a)}=\tfrac{a}{a-1}\sqrt{\tilde \kappa}
 \implies  {\mbox{by \eqref{rho tag}},}\ \sum_{k=1}^{\log_{(1/{\rho})}\left(\widehat C/\epsilon\right)+1} \rho^{-k} \leq \tfrac{{2}a^2\sqrt{\tilde \kappa} \widehat C}{(a-1)\epsilon}. 
\end{align*}
 \end{proof}
%

\begin{remark}
In Theorem~\ref{th-rate-itercomp-sc-sgd}, choosing $\eta={1}/{\mu}$ leads to 
 $\mathbb{E}[\|y_K-x^*\|^2] \leq \left({4D\over \mu}+C^2+12\sqrt 2aQ\right)\rho^{K-1},$
 and {an} oracle complexity {of} $\mathcal O\left(\tfrac{\max\left\{{M^2}/{\mu^2},{\|\redd{\tilde x_0}-\tilde x^*\|^2}\right\}}{\epsilon}\right)$, matching the result by \cite{shapiro09lectures}.
 \end{remark}
\begin{figure}
\begin{center}
	\includegraphics[width=0.5\textwidth]{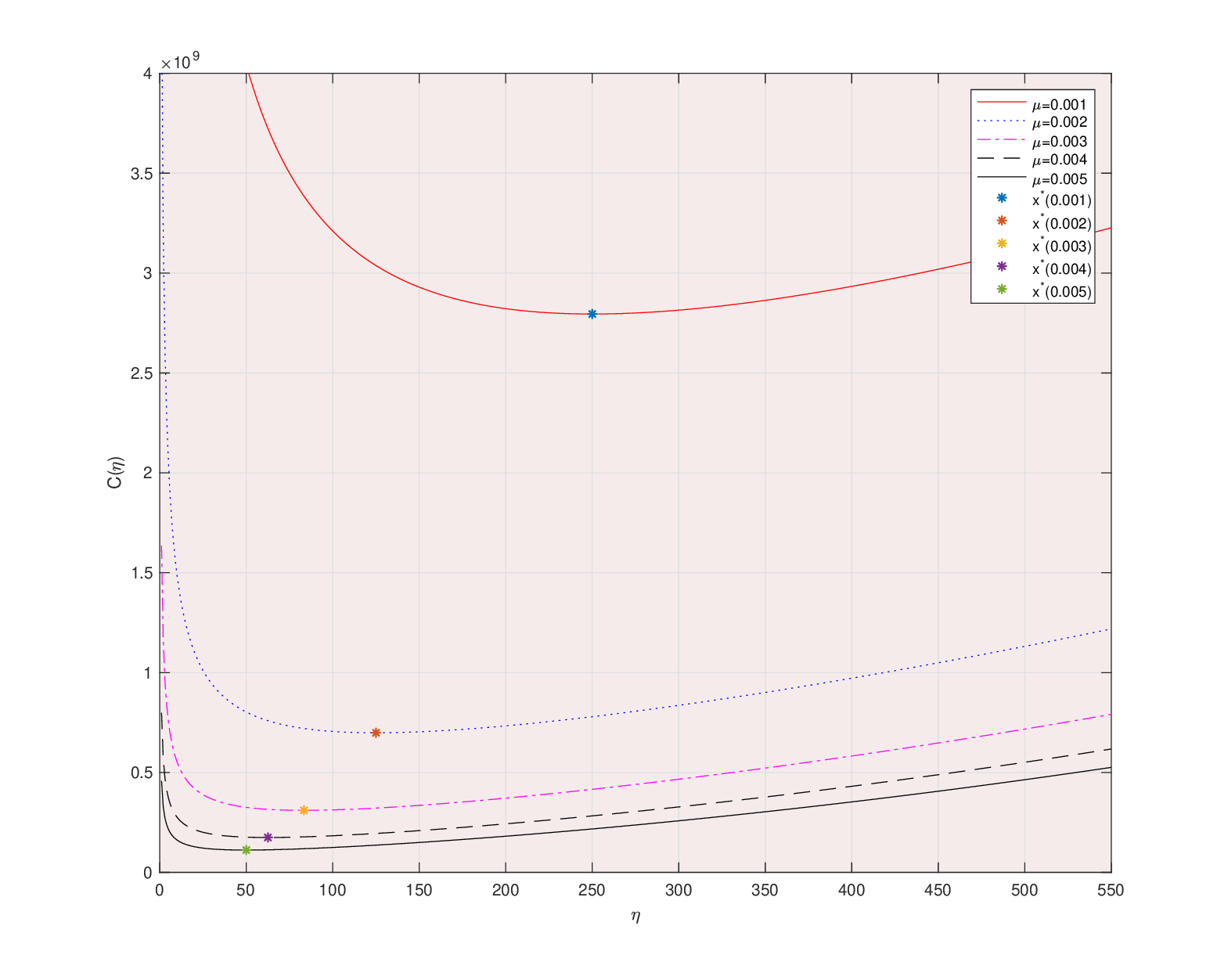}
\end{center}
	\caption{Schematic of $\widehat{C}(\eta)$ when $D = 10, M = 10, C = 100, a = 2.1, \Delta = 1$ for $\mu \in \{0.001, \cdots, 0.005\}$}
	\label{fig-eta}
	\end{figure}
 {Minimizing the convergence bound {in \eqref{bound for K nonsmooth}} in $\eta$ is possible via a less obvious coercivity and strict convexity claim for the nonsmooth function $\widehat{C}(\eta)$ (See Appendix for proof).}
{\begin{lemma}\label{char-eta}
Consider $\widehat{C}(\eta)$ defined as 
$\widehat C(\eta)  \triangleq  2D \eta \tilde{\kappa}(\eta) + C^2 + {8 \tilde{\kappa}(\eta)^{5/2}Q(\eta)a}$,  where $Q \triangleq \max\{\eta^2 M^2,4 \Delta^2\}$. 
Then the following hold. 

(i) $\widehat{C}(\eta)$ is a coercive function on $\{\eta \mid \eta \geq 0\}$. 

(ii) $\widehat{C}(\eta)$ is a strictly convex function on $\{\eta \mid \eta \geq 0\}$. 

(iii) The minimizer of $\widehat{C}(\eta)$ on $\{\eta \mid \eta \geq 0\}$ is unique. 
\end{lemma}}

{\begin{remark}\label{remark2}
{Lemma~\ref{char-eta} allows for claiming that $\widehat{C}(\eta)$ has a unique minimizer $\eta^*$; in fact, such a minimizer can be computed
by a standard semismooth Newton method~\cite{facchinei02finite}.
Fig.~\ref{fig-eta} provides a schematic of $\widehat{C}(\eta)$ for different
values of $\mu$ while $\eta^*$ is computed by  semismooth Newton method. We
note that when $\mu$ is larger, $\eta^*(\mu)$ tends to be smaller. In such
cases, obtaining an optimal $\eta^*$ is particularly useful.  However, when
$\mu \ll 1$, we observe that $\eta^*(\mu) \gg 1$; consequently, this leads to
rescaling of the step $\gamma_k$ to $\tfrac{\gamma_k}{\eta}$, resulting in
poorer behavior. Therefore, if $\mu \ll 1$, we employ $\eta = 1$ and this has
far better empirical behavior as seen in the numerics. }  
\end{remark}}

\subsection{{Linear \uvv{C}onvergence of ({\bf mVS-PM}): \uvv{N}on-compact \uvv{D}omains}} \label{sec:2.4}
In this subsection, we derive rate and complexity guarantees when {\bf (VS-PM)}, an unaccelerated variant of {\bf (VS-APM)}, is 
applied on a Moreau-smoothed problem under {possibly non-compact domains and under a (weaker)} state-dependent bound on 
the \afo{subgradient} (Assumption~\ref{assum_sgd_general}). \afo{When the subgradient of $g$ \uss{is characterized by} a state-dependent bound, the \uss{bound on the} cumulative error in the accelerated method \uss{builds up} due to a recursive relation, see \eqref{def-pk} in the Appendix. 
Hence, in this section,  we \uss{consider} a more general case \usv{in which Assumption~\ref{assum_sgd_general} imposes a state-dependent bound, weakening Assumption~\ref{assum_sgd_general_1}.}  \uss{By employing} an unaccelerated method, we \us{derive} a similar oracle complexity as in section \ref{sec:2.3}.} To obtain rate results, we {apply ({\bf VS-PM})} with the following update rule:
\begin{align}\tag{\bf VS-PM}x_{k+1}  := x_k - \gamma(\nabla_x {{F}_{\eta}} (x_k) + {\bar w}_{k,N_k}),\end{align}
where $\nabla_x {{F}_{\eta}} (x_k) + {\bar w}_{k,N_k}$ can be obtained by solving $\displaystyle \min_{u \in \mathbb{R}^n }\left[\mathbb E[\afo{\tilde F(u,\omega)}]+\tfrac{1}{2\eta}\|u-x_k\|^2 \right]$ inexactly taking $N_k$ (stochastic)
subgradient steps. {Consider the sequence of iterates $\{x_k\}$ generated by applying an inexact gradient scheme on the following strongly convex smooth optimization problem. 
 \begin{align*}
\min_{x \in \mathbb{R}^n} \ F_{\eta}(x),\ \mbox{where}\   F_{\eta}(x)\triangleq \min_{u \in \mathbb{R}^n} \ \left[\mathbb{E}[\uss{\f}(u,\uss{\omega})] + \uss{g(u)}+ \tfrac{1}{2\eta} \|x-u\|^2\right].  
\end{align*}
In effect, given an $x_0 \in \mathbb{R}^n$, the inexact gradient scheme generates a sequence $\{x_k\}$ such that 
\begin{align}\label{IG}
x_{k+1} := x_k - \gamma \left(\nabla_x F_{\eta}(x_k) + \bar{w}_k\right). \tag{IG}
\end{align}
Given an $x_k$, we denote the update with the exact gradient by $\bar{x}_{k+1}$, which is defined as follows.
\begin{align*}
\bar{x}_{k+1} := x_k - \gamma \nabla_x F_{\eta}(x_k). 
\end{align*}
Recall that $\nabla_x F_{\eta}(x_k)$ is defined as $\nabla_x F_{\eta}(x_k) = \tfrac{1}{\eta}(x_k - z^*_{k})$ where $z^*_{k}$ is the unique minimizer of the following problem, i.e. 
\begin{align}\label{prox-prob}
	z^*_{k} \ \triangleq \  \mbox{arg}\hspace{-0.02in}\min_{u \in \mathbb{R}^n} \   \left[\mathbb{E}[\uss{\F}(u,\uss{\omega})]+\tfrac{1}{2\eta} \|x_k-u\|^2\right]. 
\end{align}
In other words, $z^*_{k}$ is defined as 
\begin{align*}
	z^*_{k} \ \triangleq \ \mbox{prox}_{\eta F}(x_k) \mbox{ while } x^* = \mbox{prox}_{\eta F}(x^*). 
\end{align*}
{Since $\mbox{prox}_{\eta F}(x_k)$ is unavailable in closed form, we may compute increasingly  exact analogs;  given $z_{k,0} = x_k$, we construct the sequence $\{z_{k,j}\}_{j=1}^{N_k}$ based on ({\bf SSG}).} 
\begin{align} \tag{\bf SSG}\label{SA}
	z_{k,j+1} & = z_{k,j} - \sigma_j G(z_{k,j},\uss{\omega}_{k,j}), \quad j \geq 0, \mbox{ where } G(z_{k,j},\uss{\omega}_{k,j}) \in {\partial \uss{\F}(z_{k,j}, \uss{\omega}_{k,j}) + \tfrac{1}{\eta} {(z_{k,j}-x_k)}}. 
\end{align} 
Consequently, at major iteration $k$,  the inexact gradient of $F_{\eta}(x)$ is given by $\tfrac{1}{\eta}(x_k-z_{k,N_k})$ implying that $\bar{w}_k$ is defined as $\tfrac{1}{\eta}(z_{k}^*-z_{k,N_k}).$ Consequently, we have that
\begin{align*}x_{k+1} = x_k - \gamma (\tfrac{1}{\eta}(x_k-z_{k,N_k})) = (1-\tfrac{\gamma}{\eta})x_k + \tfrac{\gamma}{\eta} z_{k,N_k}. \end{align*}
We proceed to derive a bound on \uss{the conditional second moment of} $G(z_{k,j},\omega_{k,j}) \jb{= S(z_{k,j},\omega_{k,j})+ \tfrac{1}{\eta} {(z_{k,j}-x_k)}}$ where ${S(z_{k,j},\omega_{k,j})}\in
\partial \uss{\F}(z_{k,j}, \uss{\omega}_{k,j}) $, $M_1^2\triangleq 2 \bar{M}^2 + \tfrac{4}{\eta^2}$, $M_2^2\triangleq \tfrac{4}{\eta^2}$, and $M_3^2\triangleq2M^2$. \uss{This requires defining the history upto iteration $j$ at outer iteration $k$ by $\mathcal{F}_{k,j}$ as follows.
\begin{align}
    \mathcal{F}_{0} & = \{x_0\}, \mathcal{F}_{0,j} = \mathcal{F}_0 \cup \left\{S(z_{0,0},\omega_{0,0}), \cdots, S(z_{0,j-1}, \omega_{k,j-1})\right\}, \ \qquad \qquad j = 1, \cdots, N_0 \\
    \label{def-Fk}
    \mathcal{F}_k & = \mathcal{F}_{k-1, N_{k-1}} \cup \{x_k\}, \mathcal{F}_{k,j} = \afo{\mathcal{F}_{k}} \cup  \left\{  S(z_{k,0},\omega_{k,0}), \cdots, S(z_{k,j-1}, \omega_{k,j-1})\right\}, \ j = 1, \cdots, N_k, \ k \geq 1. 
\end{align}}
We now outline an
assumption on the bound on the stochastic subgradient  that scales with the size of $x$ allowing for non-compact domains.
\blue{ \begin{assumption}\label{assum_sgd_general} 
        \redd{Let} $\{x_k\}$ be a sequence generated by $(${\bf VS-PM}$)$ where $\nabla_{x} F_{\eta}(x_k)+\bar{w}_{k,N_k}$ is computed by taking $N_k$ steps of $(${\bf SSG}$)$ leading to a set of iterates $\{z_{k,1}, \cdots, z_{k,N_k}\}$. \redd{Let} $\mathcal{F}_{k,j}$ \usv{be} defined as \eqref{def-Fk} for $k \geq 1$ and $j = 1, \cdots, N_k$. {For any $z_{k,j}$, \redd{let} $S(z_{k,j},\omega_{k,j})$ denote a measurable selection $S(z_{k,j},\omega_{k,j})\in {\partial \tilde F(z_{k,j},\omega_{k,j})}$.} With these \uvv{constructs}, the following are assumed to hold.

\noindent (a) {$($Unbiasedness$)$. We have that $\mathbb{E}[S(z_{k,j},\omega_{k,j}) \mid \mathcal{F}_{k,j}] = S(z_{k,j}) \in \partial F(z_{k,j})$ {almost surely}.}

\noindent (b) {$($Subgradient boundedness$)$.} There exists $M,\bar M>0$  {such that for any $x$}, $\mathbb E[{\|S(z_{k,j},\omega_{k,j})\|^2}\mid \mathcal{F}_{k,j}]\leq \bar M^2\|z_{k,j}\|^2+M^2$ {almost surely}. 
\end{assumption}}

\uss{Consequently, we have that}
\begin{align}\notag \|G(z_{k,j},\uss{\omega}_{k,j})\|^2  \leq 2\|{S}(z_{k,j},\uss{\omega}_{k,j})\|^2 &+ \tfrac{2}{\eta^2}\|z_{k,j} - x_k\|^2 \leq 2\|{S}(z_{k,j},\uss{\omega}_{k,j})\|^2 + \tfrac{4}{\eta^2}\|z_{k,j}\|^2 + \tfrac{4}{\eta^2} \|x_k\|^2 \\
    \uss{\implies  \mathbb{E}[\|G(z_{k,j},\uss{\omega}_{k,j})\|^2 \mid \mathcal{F}_{k,j}] } &  \notag\overset{\mbox{\scriptsize Assump.}~\ref{assum_sgd_general}}{\leq} (2 \bar{M}^2 + \tfrac{4}{\eta^2}) \|z_{k,j}\|^2  + 2M^2 + \tfrac{4}{\eta^2} \|x_k\|^2 \\
                                                                                            &  \uss{\ =: \ } M_1^2 \|z_{k,j}\|^2+ M_2^2\|x_k\|^2 + M_3^2.\label{bd-sub-G}
\end{align}
{Based on Assumption~\ref{assum_sgd_general} and inspired by a proof technique from~\cite{chombolle10firstorder} amongst others, we derive a rate statement for ({\bf SSG}) (See Appendix for proof).}
{ \begin{proposition}\label{bd-SSG} Consider \eqref{prox-prob} where $F(\uss{\cdot},\uss{\omega})$ is a $\mu$-strongly convex function and ${S}(z,\omega)
\in \partial \uss{\F}(z,\omega)$ \uss{for any $z$}. {Suppose Assumption ~\ref{assum_sgd_general} holds and } $\hat a^2  \triangleq 4+4M_1^2+2M_2^2$ and $\hat b^2 \triangleq (4M_1^2+2M_2^2) [\|x^*\|^2]+M_3^2.$ {Given $x_k$}, consider a sequence generated by \eqref{SA} where {$\tilde \mu = \mu+\tfrac{1}{\eta}$}, $\bar J \triangleq \lceil \tfrac{2M_1^2}{{\tilde \mu}^2}-1\rceil$, and $$\sigma_j \triangleq \begin{cases}
\min \left\{\tfrac{1}{(j+1)\log(j+1)},\tfrac{\tilde \mu}{M_1^2}\right\}, & j<\bar J \\
\tfrac{1}{(j+1)\log(j+1)}. & j\geq \bar J
\end{cases}$$ 
Then  the following holds for $j \geq \bar J$. 
  \begin{align}\label{sub_g_2}
      \mathbb{E}[\|z_{k,j} - z_{k}^*\|^2 \uss{ \ \mid \mathcal{F}_{k}}]  \leq \tfrac{\hat a^2\|x_k-x^*\|^2+\hat b^2}{j}. 
\end{align}
\end{proposition}}

We now show the convergence of {\bf (mVS-PM}) when $\nabla_x F_{\eta}(x)$ is approximated via {\bf (SSG)} (See Appendix for proof).

{\begin{theorem}[{\bf ({\bf mVS-PM}) {under state-dependent bound on subgradients}}]\label{th-rate-itercomp-sc-sgd-general}
Suppose Assumptions~\ref{fistaass2} and~\ref{assum_sgd_general} hold. Consider the iterates
generated by {\bf (VS-PM)} {applied on $F_{\eta}(x)$}, where $\tilde{\kappa} \triangleq 1+\tfrac{1}{\eta \mu}$, $\gamma = \eta$, and $N_k \triangleq \lfloor N_0 \rho^{-k}\rfloor$  for all $k \geq
0$,  {$N_0> \max\{\tfrac{{2}\hat a^2}{(1-q/{2})},\bar J\}$}, $q \triangleq 1-\tfrac{1}{\tilde \kappa}$, {$p_0 \triangleq \tfrac{q}{2}+\tfrac{2\hat{a}^2}{N_0}$}, and
$\bar J \triangleq \lceil \tfrac{2M_1^2}{\bar \mu^2}-1\rceil$. Then the
following hold.\\
\noindent(i) {\bf (Rate).}  For all $k\geq 1$, we have that the following holds.
{\begin{align*}
\mathbb{E}[\|{x_k}-x^*\|^2]
 \leq \mathcal{C} \hat{p}^k \mbox{ where } \mathcal{C}   \triangleq \left( \mathbb{E}[\|x_{0}-x^*\|^2]+\tfrac{\hat b \widehat{D}}{N_0}\right),  \begin{cases}
	\rho \neq p_0, & \hat{p} = \max\{\rho,p_0\}, \widehat{D} \triangleq \tfrac{1}{{1-\tfrac{\min\{\rho,p_0\}}{\max\{\rho,p_0\}}}}\\ 
	\rho = p_0. & \hat{p} \in (p_0,1), \widehat{D} >\tfrac{1}{\ln(p_0/\hat p)^e}
\end{cases}
\end{align*}}
\noindent (ii) {\bf (Iteration complexity).}  The  iteration complexity of {$(${\bf mVS-PM}$)$} in gradient steps {$($of $\nabla_x {F}_{\eta}(x_k))$}  to obtain an \redd{$\epsilon$-accurate} solution is $\mathcal O({\tilde \kappa} \log(\mathcal C/\epsilon))$. 

\noindent (iii) {\bf (Oracle  complexity in {\bf (SSG)} steps)}. {To compute {$x_{K}$} such that $\mathbb{E}[\|{x_K}-x^*\|^2]\leq \epsilon$, the complexity in subgradient steps  is bounded as  $\sum_{k = 1}^K N_k \leq \mathcal O\left(\tilde \kappa \left(\tfrac{\mathcal C}{\epsilon}\right)^{\log_{1/\hat p}(1/\rho)}\right)$ for $\hat{p} \in [p_0,1)$, $\rho  \leq p_0$  and 
$\sum_{k = 1}^K N_k \leq \mathcal O\left(\tilde \kappa \left(\tfrac{\mathcal C}{\epsilon}\right)\right)$ for $\rho > p_0$.}
\end{theorem}
}

{\begin{remark}
We observe that when $\rho > p_0$, we achieve the optimal oracle
complexity in subgradient steps akin to the statement in the regime of bounded
subgradients. Notably, $\tilde{\kappa}$ can be controlled since $\eta$ is any
nonnegative scalar. For instance, if  $\eta=\tfrac{1}{\mu}$, 
$\tilde \kappa=2$. \end{remark}}

\section{Iteratively Smoothed VS-APM for Nonsmooth Convex Problems}\label{sec. convex}
Thus far, we have considered settings where $f$ is a strongly convex
function. However, there are many instances when the function $f$ is
neither smooth nor strongly convex. \usv{In fact, in strongly convex regimes, estimating the strong convexity parameter may often be challenging.}  In such settings,  if the function $f$ is
subdifferentiable, then subgradient methods provide an avenue for resolving
such problems in stochastic regimes but display a significantly poorer rate of convergence. \cite{nesterov2005smooth}
showed that for a subclass of problems, an accelerated gradient scheme may be
applied to a suitably {\em smoothed} problem where the smoothing leads to a
differentiable problem with Lipschitz continuous gradients (with known
Lipschitz constants). If the smoothing parameter is chosen suitably, the
convergence rate to an approximate solution can be improved to
$\mathcal{O}(1/K)$ from $\mathcal{O}(1/\sqrt{K})$ \usv{in terms of expected sub-optimality}. However, since the smoothing parameter is maintained
as fixed, Nesterov's approach can provide approximate solutions at best
but not asymptotically exact solutions. Subsequently,
~\cite{nesterov2005excessive} considered a primal-dual smoothing technique
where the smoothing parameter is reduced at every step while extensions and
generalizations have been considered more
recently by~\cite{tran2018smooth} and ~\cite{van2017smoothing}.  In this section, we
develop an {\em iteratively smoothed variable sample-size accelerated proximal
gradient} scheme that can contend with expectation-valued objectives and is asymptotically convergent. {This can be viewed as a variant of the primal smoothing
scheme introduced by~\cite{nesterov2005smooth} where 
the smoothing parameter} is reduced after every step; this scheme is shown to
admit a rate of $\mathcal{O}(1/K)$, matching the finding by
~\cite{nesterov2005smooth}; however, our scheme is blessed with asymptotic guarantees rather than providing approximate solutions.  
In Section~\ref{sec:4.1}, we  derive rate and complexity statements in Section~\ref{sec:4.2}
for the iteratively smoothed {\bf VS-APM} (or {\bf sVS-APM}), recovering the optimal rate of $\mathcal{O}(1/K^2)$ with the optimal oracle
complexity of $\mathcal{O}(1/\epsilon^2)$ under smoothness. Finally, in
Section~\ref{sec:4.3}, under suitable choices of smoothing sequences, ({\bf
sVS-APM}) produces sequences that converge a.s. to an optimal solution.
\subsection{Smoothing \uvv{T}echniques}\label{sec:4.1}        
{In this section, we consider minimizing  $ F(x)\triangleq \mathbb E[\uss{\F}(x,\omega)]$, where $\uss{\f}(x,\omega)=\uss{\f}(x,\omega)+g(x)$ such that $\afo{f}$ and  $\afo{g}$ are convex and may be nonsmooth while  $g$ has an efficient prox evaluation (or ``proximable'') but $f$ is {\bf not  proximable.} Note that this setting is more general than structured nonsmooth problems, where \uss{the} function $\afo{f}$ is considered to be convex and smooth. In contrast to the previous section, we assume that $\nabla_x \uss{\f}_{\eta_k}(x_k,\omega_k)$ is generated from the
stochastic oracle, where $\eta_k$ is a smoothing parameter at iteration $k$ such that its sequence is diminishing.} \cite{beck12smoothing} define an $(\alpha,\beta)$-smoothable function as follows.
\begin{definition}[{\bf $(\alpha,\beta)$-smoothable~\cite{beck17fom}}] A
convex function $\uss{h}: \mathbb{R}^n \to \mathbb{R}$ is referred to as
$(\alpha,\beta)$-smoothable if \usv{for any $\eta > 0$}, there exists a convex differentiable function
$\uss{h}_{\eta}: \mathbb{R}^n \to \mathbb{R}$ that satisfies the following: (i)
$\uss{h}_{\eta}(x) \leq \uss{h}(x) \leq \uss{h}_{\eta}(x)+\eta \beta$ for all $x$; and (ii)
$\uss{h}_{\eta}$ is $\alpha/\eta$ smooth.  \end{definition}
There are a host of smoothing functions based on the nature of $\uss{h}$. For
instance, when $\uss{h}(x) = \|x\|_2$, then $\uss{h}_\eta(x) = \sqrt{\|x\|_2^2 + \eta^2} -
\eta$, implying that $\uss{h}$ is $(1,1)$-smoothable function. If $\uss{h}(x) =
\max(x_1,x_2, \hdots, x_n)$, then $\uss{h}$ is $(1,\log(n))$-smoothable and $\uss{h}_{\eta}(x) = \eta
\log(\sum_{i=1}^n e^{x_i/\eta})-\eta \log(n).$ ~(see~\cite{beck12smoothing} for
more examples). Recall that when $\uss{h}$ is a proper, closed, and convex function, the Moreau envelope is defined as 
$\uss{h}_\eta(x) \triangleq \min_{u} \ \left\{\uss{h}(u)+{1\over 2\eta}\|u-x\|^2 \right\}.$
{In fact, $\uss{h}$ is $(1,B^2)$-smoothable when $\uss{h}_\eta$ is given by the Moreau
envelope~(see~\cite{beck12smoothing}) and $B$ denotes a uniform
bound on $\|s\|$ in $x$ where $s \in \partial \uss{h}(x)$. There are a range of
other smoothing techniques including Nesterov
smoothing~(see~\cite{nesterov2005smooth}) and inf-conv smoothing (see~\cite{beck17fom}); our approach is agnostic to the choice of smoothing. \uss{In particular, if}
$\uss{\f(\cdot,\omega)}$ is a proper, closed, and convex function in $x$ for every $\omega$,
then $\uss{\f(\cdot,\omega)}$ is $(1,B^2)$-smoothable for every $\omega$ where $\uss{\f_{\eta}(\cdot,\omega)}$ is a suitable smoothing. \uss{In fact, if $\f(\cdot,\omega)$ satisfies the following smoothability assumption, then smoothability of $f$ follows, as shown by Lemma~\ref{smooth_f}. It is worth emphasizing that the smoothing of $f$, denoted by $f_{\eta}$ is defined as 
\begin{align}
f_{\eta}(x) \triangleq \mathbb{E}[\f_{\eta}(x,\omega)],
\end{align}
where $\f_{\eta}(\cdot,\omega)$ is a smoothing of $\f(\cdot,\omega)$.} 

\blu{\begin{assumption}\label{ass-smooth} {The function}  ${\f}(\cdot,\omega)$ is an $(\alpha(\omega),\beta(\omega))$-smoothable function for every $\omega \in \Omega$ where $\mathbb{E}[\alpha(\omega)] \leq \tilde{\alpha}$ and $\mathbb{E}[\beta(\omega)] \leq \tilde{\beta}$ {with $\tilde{\alpha},\tilde{\beta}> 0;$} {i.e. for any $\eta>0$},  there exists a convex differentiable function ${\f}_{\eta}(\cdot,\omega)$ {for every  $\omega \in \Omega$} such that 
    \begin{align*}
    {\f}_{\eta}(x,\omega) \leq {\f}(x,\omega) & \leq {\f}_{\eta}(x,\omega) + {\eta}\beta(\omega),  \quad \mbox{for all } x\\ \mbox{ and } \|\nabla_x {\f}_{\eta}(x,\omega)-\nabla_x {\f}_{\eta}(y,\omega) \| & \leq \tfrac{\alpha(\omega)}{\eta} \|x-y\|, \quad \qquad \mbox{for all } \ x, y  
     \end{align*}
     {where $\mathbb{E}[\alpha(\omega)] \leq \tilde{\alpha}$ and  $\mathbb{E}[\beta(\omega)] \leq \tilde{\beta}$.}
     \end{assumption}}

Based on the following Lemma, we observe that $f$ is $(\tilde{\alpha},\tilde{\beta})$-smoothable \uss{if} $\f(\cdot,\omega)$ satisfies suitable smoothability requirements for almost every $\omega \in \Omega$.   
\blu{\begin{lemma}\label{smooth_f}
        {Suppose  Assumption~6 holds.}  Then {there exist $\tilde\alpha,\tilde\beta>0$ such that} $f$ is $(\tilde{\alpha},\tilde{\beta})$-smoothable {where $f(x) \triangleq \mathbb{E}[{\f}(x,\omega)]$}. 
\end{lemma}}

 We proceed to develop a smoothed variant of ({\bf
VS-APM}), referred to as ({\bf sVS-APM}),  in which $\nabla_x \uss{\f}_{\eta_k}(x_k,\omega_k)$ is generated from the
stochastic oracle and $\eta_k$ is driven to zero at a sufficient rate (See Algorithm~\ref{smooth scheme}). 

 \begin{algorithm}
 \caption{\bf Iteratively smoothed VS-APM ({sVS-APM})}
\em
\label{smooth scheme}
(0) Given budget $M$, $\redd{x_0} \in X$, \redd{$y_0=x_0$} and  positive
sequences $\{\gamma_k,N_k\}$; Set $\lambda_0=0$, $\lambda_1=1$;  $k := 1$. \\
(1) $y_ {k+1}={\bf P}_{\gamma_k,g}\left(x_k- \gamma_k  (\nabla_x f_{\eta_k}(x_k) + {\bar w}_{k,N_k})\right)$;\\
(2) $\lambda_{k+1}=\frac{1+\sqrt{1+4\lambda_k^2}}{2}$;\\
(3) $x_{k+1}=y_ {k+1}+\frac{(\lambda_{k}-1)}{\lambda_{k+1}}\left(y_ {k+1}-y_ {k}\right)$;\\
(4) If $\sum_{j=1}^k N_j > M$, then stop; else $k := k+1$; return
to (1).
\end{algorithm}
\subsection{Rate and Complexity \uvv{A}nalysis}\label{sec:4.2}
{In this subsection, we develop rate and oracle complexity statements for Algorithm~\ref{smooth scheme} when $f$ is $(1,B^2)$ smoothable and then specialize these results to both the deterministic nonsmooth and the stochastic smooth regimes. We begin with a modified assumption.}
\begin{assumption}\label{fistaass3}
(i) The function $g$ is lower semicontinuous and convex with effective domain denoted by $\mbox{dom}(g)$; (ii) \uss{$f$} is proper, closed,  convex, and $(1,B^2)$-smoothable on an open set containing $\mbox{dom}(g)$;
(iii) There exists $C>0$ such that $\mathbb{E} [\Vert \redd{x_0}-x^{\ast }\Vert]\leq C$ {for all $x^* \in X^*$}. 
\end{assumption}
\blu{Note that Assumption~\ref{ass-smooth} represents a set of sufficiency conditions for $f$ to be smoothable; here, we directly assume that $f$ is smoothable to ease the exposition.}
\begin{lemma}\label{vs-apm smooth}
{Suppose Assumption~\ref{fistaass3} holds. Consider the iterates generated by {\bf (sVS-APM)} on $F(x)$. Suppose Assumption~\ref{ass_error2} holds for $f_{\eta_k}(x)$}. If $\{\gamma_k\}$ is a decreasing sequence {and $\gamma_k\leq \eta_k/2$}, then the following holds  
for all $K\geq 2$:
\begin{align*}
\mathbb{E}[F_{\eta_k}(y_K)-F_{\eta_k}({x^*})]\leq  {2\over \gamma_{K-1}(K-1)^2}\sum_{k=1}^{K-1} \gamma_k^2k^2 \frac{\nu^2}{N_k} 
 +  {2C^2\over \gamma_{K-1}(K-1)^2}.
\end{align*}
\end{lemma}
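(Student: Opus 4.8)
The plan is to emulate the standard FISTA convergence argument (cf.~\cite{beck2009fast}), but tracking the errors introduced by the inexact gradient $\nabla_x f_{\eta_k}(x_k) + \bar w_{k,N_k}$ of the smoothed function $f_{\eta_k}$. The first step is to establish the one-step descent inequality for the proximal-gradient update $y_{k+1} = {\bf P}_{\gamma_k,g}(x_k - \gamma_k(\nabla_x f_{\eta_k}(x_k) + \bar w_{k,N_k}))$. Since $f_{\eta_k}$ is $(1/\eta_k)$-smooth and $\gamma_k \le \eta_k/2 \le 1/(2\cdot(1/\eta_k))$ ensures $\gamma_k \le 1/(2L_k)$ with $L_k = 1/\eta_k$, the descent lemma gives, for any $z$,
\begin{align*}
F_{\eta_k}(y_{k+1}) \le F_{\eta_k}(z) + \tfrac{1}{2\gamma_k}\left(\|x_k - z\|^2 - \|y_{k+1}-z\|^2\right) + \langle \bar w_{k,N_k}, z - y_{k+1}\rangle + \text{(error term)},
\end{align*}
where the inner-product term with $\bar w_{k,N_k}$ and a residual term of order $\gamma_k\|\bar w_{k,N_k}\|^2$ collect the inexactness; the extra slack $\gamma_k \le \eta_k/2$ rather than $\eta_k$ is precisely what absorbs the $\gamma_k\|\bar w_{k,N_k}\|^2$ contribution after applying Young's inequality.

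Next I would run the classical telescoping with the momentum weights $\lambda_k$ satisfying $\lambda_{k+1}^2 - \lambda_{k+1} = \lambda_k^2$ (which is exactly the update in step (2)). Applying the descent inequality at $z = y_k$ and at $z = x^*$, forming the convex combination with weights $(\lambda_k-1)/\lambda_k$ and $1/\lambda_k$ as in FISTA, and introducing $u_k \triangleq \lambda_k y_k - (\lambda_k - 1) y_{k-1} - x^*$, one obtains a recursion of the form
\begin{align*}
\tfrac{2\gamma_k}{1}\lambda_k^2\left(F_{\eta_k}(y_{k+1}) - F_{\eta_k}(x^*)\right) + \|u_{k+1}\|^2 \le \tfrac{2\gamma_{k-1}}{1}\lambda_{k-1}^2\left(F_{\eta_{k-1}}(y_k) - F_{\eta_{k-1}}(x^*)\right) + \|u_k\|^2 + (\text{noise terms}),
\end{align*}
where monotonicity of $\{\gamma_k\}$ is used so that $\gamma_k\lambda_k^2 \le \gamma_{k-1}\lambda_k^2$ and to handle the change from $F_{\eta_{k}}$ to $F_{\eta_{k-1}}$ in the telescoped term (here the assumption that $\eta_k$ is decreasing and the sandwich $f_{\eta}(x)\le f(x)\le f_{\eta}(x)+\eta B^2$ lets one compare $F_{\eta_k}$ and $F_{\eta_{k-1}}$; alternatively one keeps everything at a single smoothing level and pays the gap only at the end). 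Summing from $k=1$ to $K-1$ and using $\lambda_k \ge k/2$ (with $\lambda_1=1$), the left side dominates $\tfrac{\gamma_{K-1}(K-1)^2}{2}\left(F_{\eta_{K-1}}(y_K) - F_{\eta_{K-1}}(x^*)\right)$ while the initial $\|u_1\|^2 = \|x_1 - x^*\|^2$ term contributes the $2C^2/(\gamma_{K-1}(K-1)^2)$ after dividing through; here I take expectations and use Assumption~\ref{fistaass3}(iii).

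The remaining task is to control the accumulated noise terms. Taking conditional expectations and using $\mathbb{E}[\bar w_{k,N_k}\mid\mathcal F_k]=0$ from Assumption~\ref{ass_error2}, the cross terms $\langle \bar w_{k,N_k}, x^* - \cdot\rangle$ vanish in expectation once the multiplying vector is shown to be $\mathcal F_k$-measurable — this requires a small rearrangement so that $\bar w_{k,N_k}$ is paired with an $\mathcal F_k$-measurable quantity (the standard trick of splitting $y_{k+1}$ into an $\mathcal F_k$-measurable "shadow" iterate plus a remainder, or equivalently absorbing the mismatch into another $\gamma_k\|\bar w_{k,N_k}\|^2$ term). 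What survives is $\sum_{k=1}^{K-1}\gamma_k^2\lambda_k^2\,\mathbb{E}[\|\bar w_{k,N_k}\|^2] \le \sum_{k=1}^{K-1}\gamma_k^2 k^2 \nu^2/N_k$, using $\mathbb{E}[\|\bar w_{k,N_k}\|^2\mid\mathcal F_k]\le\nu^2/N_k$ and $\lambda_k^2 \le k^2$; dividing by $\gamma_{K-1}(K-1)^2/2$ yields exactly the first term in the claimed bound. I expect the main obstacle to be the bookkeeping around measurability in the cross terms — ensuring that every appearance of $\bar w_{k,N_k}$ is either paired with an $\mathcal F_k$-measurable vector (so it drops in expectation) or bounded by a quadratic term the step-size slack $\gamma_k\le\eta_k/2$ can pay for — since a naive FISTA telescoping couples $\bar w_{k,N_k}$ with $y_{k+1}$, which is not $\mathcal F_k$-measurable; this is the delicate point and the reason the lemma is stated in expectation rather than pathwise.
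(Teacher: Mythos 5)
Your proposal follows essentially the same route as the paper's proof: the inexact proximal-gradient inequality for the $1/\eta_k$-smooth $f_{\eta_k}$, the FISTA weights with $\lambda_k\in[k/2,k]$ and the auxiliary vector $u_k$, monotone $\{\gamma_k\}$, and, exactly as you anticipate, the paper splits the noise term $\bar w_{k,N_k}^T\big((\lambda_k-1)y_k+x^*-\lambda_k y_{k+1}\big)$ into a part paired with the $\mathcal{F}_k$-measurable $u_k$ (vanishing in expectation) and a part $\lambda_k\bar w_{k,N_k}^T(x_k-y_{k+1})$ absorbed via Young's inequality into the negative quadratic created by $\gamma_k\le\eta_k/2$, which is precisely where the $\gamma_k^2\lambda_k^2\|\bar w_{k,N_k}\|^2$ contribution comes from. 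The only divergence is at the point you flag about telescoping across smoothing levels: where you would pay a sandwich correction of order $\eta B^2$ (or freeze the smoothing level), the paper simply identifies $\delta_{k+1}$ across levels $\eta_k$ and $\eta_{k+1}$ without comment and defers all $B^2$ bookkeeping to Theorem~\ref{rate-sVS-APM}, so on that step your treatment is, if anything, more explicit than the paper's.
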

\begin{proof}
{By the update rule in Algorithm}~\ref{smooth scheme}, we have 
\begin{align}\label{fistaobjective func smooth}
y_  {k+1}=\arg \hspace{-0.025in}\min_x \ g(x)+{1\over 2\gamma_k}\|x-x_k\|^2+\left(\nabla_x f_{\eta_k}(x_k)+\bar w_k\right)^Tx.
\end{align}
{From the optimality condition} for \eqref{fistaobjective func smooth}, $0\in \partial g(y_  {k+1}) +{1\over \gamma_k}(y_  {k+1}-x_k)+\nabla_x f_{\eta_k}(x)+\bar w_k$. {By} convexity {of} $g(x)$, we {have} that $g(x)\geq g(y_k)+s^T(x-y_  {k+1})$ for all $s\in \partial g(y_k)$. {Hence}, we obtain the following.
\begin{align*}
g(x)+(\nabla_x f_{\eta_k}(x_k)+\bar w_k)^Tx\geq g(y_  {k+1})+(\nabla_x f_{\eta_k}(x_k)+\bar w_k)^Ty_  {k+1}-{1\over \gamma_k}(x-y_  {k+1})^T(y_  {k+1}-x_k).
\end{align*}
Now by using Lemma \ref{fistalem1}, we obtain that
\begin{align}\label{fistag(x) smooth}
&\qquad \nonumber g(x)+\left(\nabla_x f_{\eta_k}(x_k)+\bar w_k\right)^Tx+{1\over 2\gamma_k}\|x-x_k\|^2\\
&\geq g(y_  {k+1})+\left(\nabla_x f_{\eta_k}(x_k)+\bar w_k\right)^Ty_  {k+1}+{1\over 2\gamma_k}\|x_k-y_  {k+1}\|^2+{1\over 2\gamma_k}\|x-y_  {k+1}\|^2.
\end{align}
{By invoking the} convexity of {$f_{\eta_k}$} and {by} using the Lipschitz continuity {of} $\nabla_x f_{\eta_k}$, we obtain
\begin{align}\label{fistaf(x) smooth}
f_{{\eta_k}}(x)&\nonumber \geq f_{\eta_k}(x_k)+\nabla_x f_{\eta_k}(x_k)^T(x-x_k)\\\nonumber
&\geq f_{\eta_k}(y_  {k+1})+\nabla_x f_{\eta_k}(x_k)^T(x-y_  {k+1})-{1\over 2\eta_k}\|x_k-y_  {k+1}\|^2\\
&=f_{\eta_k}(y_  {k+1})+\left(\nabla_x f_{\eta_k}(x_k)+\bar w_k\right)^T(x-y_  {k+1})-{1\over 2\eta_k}\|x_k-y_  {k+1}\|^2-\bar w_k^T(x-y_  {k+1}),
\end{align}
where {the last equality follows from adding and subtracting} $\bar w_k$. By adding \eqref{fistag(x) smooth} and \eqref{fistaf(x) smooth}, we obtain
\begin{align}\label{fistaF(x) smooth}
F_{\eta_k}(y_  {k+1})-F_{\eta_k}(x)&\nonumber\leq {1\over 2\gamma_k}\|x-x_k\|^2-{1\over 2\gamma_k}\|x-y_  {k+1}\|^2+{1\over 2}\left({1\over \eta_k}-{1\over \gamma_k}\right)\|x_k-y_  {k+1}\|^2-\bar w_k^T(y_  {k+1}-x)\\ 
&= \left({1\over 2\eta_k}-{1\over \gamma_k}\right)\|x_k-y_  {k+1}\|^2+{1\over \gamma_k}(x_k-y_  {k+1})^T(x_k-x)-\bar w_k^T(y_  {k+1}-x),
\end{align}
where {the} last inequality {follows} from Lemma \ref{fistalem1} by choosing $Q=I$, $v_1=x_k$, $v_2=x$, and $v_3=y_k$.  By {setting} $x=y_{k}$ in \eqref{fistaF(x) smooth}, we {have}
\begin{align}\label{fistabound y smooth}
F_{\eta_k}(y_  {k+1})-F_{\eta_k}(y_  {k}) & \nonumber \leq \Big({1 \over 2\eta_k}-{1\over \gamma_k}\Big)\|x_k-y_  {k+1}\|^2+{1\over \gamma_k}(x_k-y_  {k+1})^T(x_k-y_  {k})\\ &-\bar w_{k,N_k}^T(y_  {k+1}-y_  {k}).
\end{align}
Similarly, by letting $x={x^*}$, we can obtain
\begin{align} \label{fistaeq3 smooth}
F_{\eta_k}(y_  {k+1})-F_{\eta_k}({x^*}) &\nonumber \leq \Big({1 \over 2\eta_k}-{1\over \gamma_k}\Big)\|x_k-y_  {k+1}\|^2+{1\over \gamma_k}(x_k-y_  {k+1})^T(x_k-x^*)\\ &-\bar w_{k,N_k}^T(y_  {k+1}-{x^*}).
\end{align}
 {By invoking Lemma \ref{fistalem1} where $v_1=x_k$, $v_2=y_{k+1}$ and $v_3=y_k$, we obtain
\begin{align*}
{1\over \gamma_k}(y_{k+1}-x_k)^T(y_k-x_k) = {1\over 2\gamma_k} \left(\|y_k-x_k\|^2 +\|y_{k+1}-x_k\|^2 - \|y_{k+1}-y_k\|^2\right). 
\end{align*}
Consequently, \eqref{fistabound y smooth} can further bounded as follows:
\begin{align}\label{bound y2}
\notag
&  F_{\eta_k}(y_{k+1})-F_{\eta_k}(y_k)   \leq \Big({1 \over 2\eta_k}-{1\over
		\gamma_k}\Big)\|x_k-y_{k+1}\|^2+{1\over
	\gamma_k}(x_k-y_{k+1})^T(x_k-y_k)
 -\bar w_{k,N_k}^T(y_{k+1}-y_k) \\
\notag 	& = \Big({1 \over 2\eta_k}-{1\over
		\gamma_k}\Big)\|x_k-y_{k+1}\|^2+{1\over 2\gamma_k} \left(\|x_k-y_k\|^2 +\|y_{k+1}-x_k\|^2 -\|y_{k+1}-y_k\|^2\right) -\bar w_{k,N_k}^T(y_{k+1}-y_k) \\
 	& = \Big({1 \over 2\eta_k}-{1\over
		2\gamma_k}\Big)\|x_k-y_{k+1}\|^2+{1\over 2\gamma_k} \left(\|x_k-y_k\|^2 - \|y_{k+1}-y_k\|^2\right)  -\bar w_{k,N_k}^T(y_{k+1}-y_k). 
\end{align}
Similarly, we have that 
\begin{align}\label{bound y3}
\notag
F_{\eta_k}(y_{k+1})-F_{\eta_k}({x^*}) &  \leq  \Big({1 \over 2\eta_k}-{1\over
		2\gamma_k}\Big)\|x_k-y_{k+1}\|^2+{1\over 2\gamma_k} \left(\|x_k-{x^*}\|^2 - \|y_{k+1}-{x^*}\|^2\right) \\ 
	& -\bar w_{k,N_k}^T(y_{k+1}-{x^*}). 
\end{align}
}
By multiplying \eqref{bound y2} by $(\lambda_k-1)$ and adding to \eqref{bound y3}, where $\delta_k\triangleq F_{\eta_k}(y_k)-F_{\eta_k}({x^*})$, we have 
  {\begin{align}
  \label{eqlam}  & \quad \lambda_k \delta_{k+1}-(\lambda_k-1)\delta_k 
 \leq\Big({1 \over 2\eta_k}-{1\over
		2\gamma_k}\Big)\lambda_k\|y_{k+1}-x_k\|^2   \\
\label{eqx}	& + {1\over 2\gamma_k} (\lambda_k - 1) \left(\|x_k-y_k\|^2 - \|y_{k+1}-y_k\|^2\right)  +{1\over 2\gamma_k} \left(\|x_k-{x^*}\|^2 - \|y_{k+1}-{x^*}\|^2\right) \\ 
	\label{eq4}	& + \bar w_{k,N_k}^T\left((\lambda_k-1)y_k+{x^*}-\lambda_ky_{k+1}\right).
\end{align}
Again by using Lemma \ref{fistalem1}, we may express the terms in \eqref{eqx} as follows:
\begin{align*}
	& \  {1\over 2\gamma_k} (\lambda_k - 1) \left(\|x_k-y_k\|^2 - \|y_{k+1}-y_k\|^2\right)  +{1\over 2\gamma_k} \left(\|x_k-{x^*}\|^2 - \|y_{k+1}-{x^*}\|^2\right) \\ 
	& =  {1\over 2\gamma_k} \left( \lambda_k \|x_k-y_k\|^2 - \lambda_k\|y_{k+1}-y_k\|^2  - \|x_k-y_k\|^2 + \|y_{k+1}-y_k\|^2
	 + \|x_k -{x^*}\|^2 - \|y_{k+1}-{x^*}\|^2\right) \\
	& = {1\over 2 \gamma_k} \left( -\lambda_k \|y_{k+1}-x_k\|^2 + 2\lambda_k (y_{k+1}-x_k)^T(y_k-x_k) + \|y_{k+1}-x_k\|^2 - 2(y_{k+1}-x_k)^T (y_k-x_k)\right. \\
	& \left. - \|y_{k+1}-x_k\|^2 + 2(y_{k+1}-x_k)^T({x^*}-x_k)\right)\\
	& = {1\over 2\gamma_k} \left( -\lambda_k \|y_{k+1}-x_k\|^2 + 2(y_{k+1}-x_k)^T((\lambda_{k}-1)y_k -\lambda_k x_k + {x^*})\right).	
\end{align*}
In addition,  
\begin{align*}
& \quad	\bar w_{k,N_k}^T\left((\lambda_k-1)y_k+{x^*}-\lambda_ky_{k+1}\right)  =\bar w_{k,N_k}^T\left((\lambda_k-1)y_k+{x^*}-\lambda_kx_k \right) +   
\bar w_{k,N_k}^T\left(\lambda_kx_k - \lambda_k y_{k+1} \right).
\end{align*}
From the update rule, $\lambda_{k-1}^2=\lambda_k(\lambda_k - 1)
	= \lambda_k^2-\lambda_k$. Now by
		multiplying \eqref{eqlam} by $\lambda_k$, we obtain the following, where $u_k=(\lambda_k-1)y_k-\lambda_kx_k+x^*$:
\begin{align}\label{bound lamk}
&   \quad \lambda_k^2\delta_{k+1}-\lambda_{k-1}^2\delta_k \leq \lambda_k^2 \left( \frac{1}{2\eta_k} - {1\over 2\gamma_k}\right) \|y_{k+1}-x_k\|^2\\ \nonumber
& + {1\over 2\gamma_k}\left(-\|\lambda_k y_{k+1}-\lambda_k x_k\|^2 + 2 (\lambda_k y_{k+1}-\lambda_k x_k)^T ((\lambda_k-1)y_k + x^* -\lambda_k x_k) \right) \\ \nonumber
& - \lambda_k^2 \bar{w}_{k,N_k}^T(x_k-y_{k+1}) -\lambda_k w_k^T u_{k}  = \lambda_k^2 \left( \frac{1}{2\eta_k} - {1\over 2\gamma_k}\right) \|y_{k+1}-x_k\|^2- \lambda_k^2 \bar{w}_{k,N_k}^T(x_k-y_{k+1})\\ \nonumber
& + {1\over 2\gamma_k}\left(\|\lambda_k x_k - (\lambda_k - 1) y_k -{x^*}\|^2 - \|\lambda_k y_{k+1}- (\lambda_k-1) y_k -{x^*}\|^2\right) 
  -\lambda_k w_k^T u_{k} \\ \nonumber
& \leq \frac{\lambda_k^2}{{2 \over \gamma_k} - {2 \over \eta_k}} \|\bar{w}_{k,N_k}\|^2 
 + {1\over 2\gamma_k}\left(\|u_k\|^2 - \|u_{k+1}\|^2\right)
  -\lambda_k w_k^T u_{k}, 
\end{align}
where in the last inequality we used the update rule of algorithm, $x_{k+1}=y_{k+1}+\frac{\lambda_k-1}{\lambda_{k+1}}(y_{k+1}-y_k)$, to obtain the following:
$$u_{k+1}=(\lambda_{k+1}-1)y_{k+1}-\lambda_{k+1}x_{k+1}{+}{x^*}=(\lambda_k-1)y_k-\lambda_ky_{k+1}{+}{x^*}.$$
By multiplying both sides by $\gamma_k$ and assuming $\gamma_k\leq \gamma_{k-1}$, we obtain
\begin{align}\label{moreau a.s.}
&\gamma_k\lambda_k^2\delta_{k+1}-\gamma_{k-1}\lambda_{k-1}^2\delta_{k} \leq \frac{\gamma_k \lambda_k^2}{{2 \over  \gamma_k} - {2 \over \eta_k}} \|\bar{w}_{k,N_k}\|^2 
 + {1\over 2}\left(\|u_k\|^2 - \|u_{k+1}\|^2\right)
  -\gamma_k\lambda_k w_k^T u_{k}.
  \end{align}
  By assuming $\gamma_k\leq {\eta_k\over 2}$, we obtain ${1\over \gamma_k}-{1\over \eta_k}\geq {1\over 2\gamma_k}$,  {implying that}
 \begin{align} \label{ineqk}
 &\gamma_k\lambda_k^2\delta_{k+1}-\gamma_{k-1}\lambda_{k-1}^2\delta_{k} \leq \gamma_k^2 \lambda_k^2\|\bar{w}_{k,N_k}\|^2 
 + {1\over 2}\left(\|u_k\|^2 - \|u_{k+1}\|^2\right)
  -\gamma_k\lambda_k w_k^T u_{k}.
 \end{align} 
  Summing {\eqref{ineqk} from} $k = 1$ to $K-1$, we have the following:
\begin{align*}
&\gamma_{K-1}\lambda_{K-1}^2 \delta_K \leq  \sum_{k=1}^{K-1} \gamma_k^2\lambda_k^2\|\bar{w}_{k,N_k}\|^2 
 +  {1\over 2}\|u_1\|^2
  -\sum_{k=1}^{K-1}\gamma_k \lambda_k w_k^T u_{k} \\
&\implies  \delta_K \leq  {1\over \gamma_{K-1}\lambda_{K-1}^2}\sum_{k=1}^{K-1} \gamma_k^2\lambda_k^2 \|\bar{w}_{k,N_k}\|^2 
 +  {1\over 2\gamma_{K-1}\lambda_{K-1}^2}\|u_1\|^2
 -{1\over \gamma_{K-1}\lambda_{K-1}^2}\sum_{k=1}^{K-1}\gamma_k\lambda_k w_k^T u_{k}. 
\end{align*}
Taking expectations, we note that the last term on the right is zero (under a zero bias assumption), leading to the following:
\begin{align*}
\mathbb{E}[\delta_K] \leq  {1\over \gamma_{K-1}\lambda_{K-1}^2}\sum_{k=1}^{K-1} \gamma_k^2\lambda_k^2 \frac{\nu^2}{N_k} 
 +  {1\over 2\gamma_{K-1}\lambda_{K-1}^2}\mathbb{E}[\|u_1\|^2\|]&\leq  {2\over \gamma_{K-1}(K-1)^2}\sum_{k=1}^{K-1} \gamma_k^2k^2 \frac{\nu^2}{N_k} 
 \\&+  {2C^2\over \gamma_{K-1}(K-1)^2},
\end{align*}}
where in the last inequality we used the fact that  $\|y-x^*\|\leq C$ for all $y\in {\rm dom}(g)$ and  ${k\over 2}\leq \lambda_k\leq k$ which may be shown inductively.
\end{proof}
We are now ready to prove our main rate result and oracle complexity bound for ({\bf sVS-APM}). 
{\begin{theorem}[{\bf Rate Statement and Oracle Complexity Bound for (sVS-APM)}]\label{rate-sVS-APM}
{Suppose Assumption~\ref{fistaass3} holds. Consider the iterates generated by {\bf (sVS-APM)} on $F(x)$. Suppose Assumption~\ref{ass_error2} holds for $f_{\eta_k}$}. {Suppose $\{\lambda_k\}$ is specified in {\bf (sVS-APM)}, 
$\eta_k={1 \slash k}$, $\gamma_k=1/  {2}k$, and $N_k = \lfloor k^a\rfloor$.} \\
\noindent (i) The following holds for
any $K\geq 1$: 
{\begin{align*}
 \mathbb{E}[F(y_{K+1})-F(x^*)]\leq \begin{cases}
			\tfrac{ \left(\tfrac{2\nu^2a}{a-1}+4C^2+B^2\right)}{K}, & a = 1+\delta, \delta \in [\delta_L,\delta_U] \\
			\tfrac{ 2\nu^2(1+\log(K))+4C^2+B^2}{K}, & a = 1 
			\end{cases}
\end{align*}
\noindent (ii) Let $\epsilon \leq \tilde C/2$ and $K$ is such that $\mathbb{E}[F(y_{K+1})-F(x^*)]\leq \epsilon$. Then the following holds.
 $$\sum_{k=1}^K N_k\leq \begin{cases}
		{\mathcal O}\left(\tfrac{1}{\epsilon^{2+\delta_L}}\right), & a = 1+\delta, \delta \in [\delta_L, \delta_U] \\
		 {\mathcal O}\left(\tfrac{1}{\epsilon^2} \log^2(1/\epsilon)\right). & a = 1 
		\end{cases} $$}
\end{theorem}}
\begin{proof}
{\bf (i)} If $N_k=\lfloor k^{a}\rfloor\geq {1\over 2} k^{a}$ and $\gamma_k=1/(2k)$ {is utilized in Lemma~\ref{vs-apm smooth}}, we obtain the following
\begin{align}\label{bound nk}
\mathbb{E}[\delta_{K+1}] \leq  {2\nu^2\over K}\sum_{k=1}^{K}\frac{1}{k^{a}} 
 +  {4C^2\over K}. 
\end{align}
\noindent {(a) {\em $a = 1+ \delta$ where $\delta \in [\delta_L,\delta_U]$}. Consequently, we may derive the next bound.}
\begin{align*}
\sum_{k=1}^{K} k^{-a}=1+\sum_{k=2}^{K} k^{-a}\leq 1+\int_1^{K} k^{-a} dk= 1+ {1-K^{1-a}\over a-1}\leq 
\frac{ {1+\delta_U}}{\delta_L}.
\end{align*}
{By invoking $(1,B^2)$-smoothability of $f$ and $\eta_K=1/K$, we have that 
$F_{{\eta_K}}(y_{K+1}) \leq F(y_{K+1})$ and $-F_{\eta_K}(x^*) \leq -F(x^*) + \eta B^2$.}
Hence, the required bound follows from \eqref{bound nk}  
\begin{align*}
\mathbb{E}[F(y_{K+1})-F(x^*)]&\leq  {2\nu^2 a\over (a-1)K}
 +  {4C^2+ {B^2}\over K}\leq {\bar C \over K}, \mbox{ where } \bar C \triangleq  {2\nu^2 a\over (a-1)}
 +  {4C^2}+{B^2}. 
\end{align*}
\noindent {(b)} {{\em $a = 1$.} Recall that the convergence rate is given by the following: \begin{align*}
 \mathbb{E}[F(y_{K+1})-F(x^*)]\leq \frac{{2\nu^2 (a-K^{1-a})\over (a-1)}
 +  {4C^2}+{B^2}}{K}.
\end{align*}
Taking limits, we obtain that
$$ \lim_{a\to 1} \frac{a-K^{1-a}}{a-1}= \lim_{a \to 1}\frac{1+K^{1-a}\log(K)}{1} =   1+\log(K).$$ Therefore, we have that
\begin{align*}
 \mathbb{E}[F(y_{K+1})-F(x^*)]\leq \frac{{2\nu^2 \log(K)}
 +  {4C^2}+{B^2}}{K} {\ \triangleq \ } \frac{a+b \log(K)}{K}.
\end{align*}}
{\bf (ii)} Consider $y_{K+1}$ satisfying $\mathbb{E}[F(y_{K+1})-F(x^*)]\leq
\epsilon$. {We again consider two cases.} 
{(a) {\em $a = 1+ \delta$ where $\delta \in [\delta_L,\delta_U]$}}. Since 
we have ${\bar C\over K}\leq \epsilon$ which implies that $K=\lceil
\bar C/\epsilon\rceil$. To obtain the optimal oracle complexity we require
$\sum_{k=1}^K N_k$ gradients. Hence, the following holds for sufficiently small
$\epsilon$ such that $2\leq \bar C/\epsilon$: \begin{align*}
\sum_{k=1}^K N_k\leq \sum_{k=1}^K k^{a}=\sum_{k=1}^{1+\bar C/\epsilon} k^{a}\leq \int_0^{2+\bar C/\epsilon} k^a da=\frac{{(2+\bar C/\epsilon)^{1+a}}}{1+a}\leq 
{\left({\bar C\over \epsilon}\right)^{1+a}} \leq \mathcal{O}\left( \frac{1}{\epsilon^{1+a}}\right)\leq 
\mathcal{O}\left( \frac{1}{\epsilon^{{2+\delta_L}}}\right).
\end{align*}
\noindent {(b) {\em $a = 1$.}}  
{To compute $K$ such that $\frac{a+b \log(K)}{K} \leq \epsilon$ is not immediately obvious but may be obtained via the Lambert function\footnote{The Lambert function $W(x)$ is the inverse function of $y e^y = x$ and is denoted by $y = W(x)$. This function has two real branches: an upper branch $W_0(x)$ for $x \in [-\tfrac{1}{e},+\infty]$ and a lower branch $W_{-1}(x)$ for $x \in [-\tfrac{1}{e},0]~$\cite{veberic2010having}.}~\cite{chatzigeorgiou2013bounds}. For purposes of simplicity, suppose $a = 0$ and $b=1$. Then we have the following.
\begin{align*}
        \tfrac{\log(K)}{K} \leq \epsilon &  \Leftrightarrow \tfrac{-\log(K)}{K} \geq -\epsilon \\
                &  \Leftrightarrow W_{-1}\left(\tfrac{-\log(K)}{K}\right) \leq W_{-1}(-\epsilon), \mbox{ since } W_{-1}(\cdot) \mbox{ is decreasing}.
\end{align*}
But $W_{-1}(-\tfrac{\log(x)}{x}) = -\log(x)$ for $x > e$. Consequently, we have that
\begin{align*}
        -\log(K) \leq  W_{-1}(-\epsilon) \Leftrightarrow K \geq e^{-W_{-1}(-\epsilon)}.
\end{align*}
By definition of the Lambert function, we have that $e^{W(x)} = \tfrac{x}{W(x)}$, implying that
\begin{align*}
K \geq e^{-W_{-1}(-\epsilon)} = \tfrac{W_{-1}(-\epsilon)}{\epsilon} \geq \mathcal{O}\left(\tfrac{\log(\epsilon)}{-\epsilon}\right) = \mathcal{O}\left(\tfrac{1}{\epsilon}\log(1/\epsilon)\right).
\end{align*}
where the first inequality follows from (3) in ~\cite{chatzigeorgiou2013bounds}.} 
 Hence, the oracle complexity for $a = 1$ will be
$\mathcal O\left(\frac{\log^2(1/\epsilon)}{\epsilon^2}\right)$, which is near
optimal (where optimal is $\mathcal {O}(1/\epsilon^2))$.\end{proof}

{We now consider two cases of Theorem~\ref{rate-sVS-APM} for which similar rate statements are available.}

\noindent {\bf Case 1. Structured stochastic nonsmooth optimization with $f$ smooth.} 
{Now consider problem \eqref{main problem}, where $f(x)$ is a smooth
function. Recall that we considered such a problem in Section~\ref{sec. strong}
for strongly convex $f$ and in this case, we consider the merely convex case.
When $f$ is deterministic, accelerated gradient methods first proposed by~\cite{nesterov83} and
their proximal generalizations suggested by~\cite{beck2009fast} were characterized by the
optimal rate of convergence of $\mathcal{O}(1/K^2)$. When $f$ is
expectation-valued, ~\cite{ghadimi2016accelerated} presented the
first known accelerated scheme for stochastic convex optimization where the
optimal rate of $1/k^2$ was shown for the expected sub-optimality error.  This
rate required choosing the simulation length $K$ and choosing $N_k=\lfloor k^2
K\rfloor$ which led to the optimal oracle complexity of
$\mathcal{O}(1/\epsilon^2)$. However, this method is somewhat different from ({\bf VS-APM}). In
particular, every step requires two prox evaluations (rather than one for
{\bf VS-APM}).\footnote{While pursuing submission of the present work, we were informed of
related work by ~\cite{jofre2017variance} through a private
communication.} ~\cite{jofre2017variance} developed an accelerated
proximal scheme for convex problems with a similar algorithm but allow for
state dependent noise. The weakening of the noise requirement still allows for
deriving the optimal rate of $\mathcal{O}(1/K^2)$ but necessitates  choosing
$N_k=\lfloor k^3(\ln k)\rfloor$. As a consequence, the oracle complexity is
slightly poorer than the optimal level and is given by $\mathcal
O\left(\epsilon^{-2} \ln^2 (\epsilon^{-0.5})\right)$. We note that ({\bf
VS-APM}) displays the optimal oracle complexity $\mathcal O(\epsilon^{-2})$ by
choosing $N_k=\lfloor k^2 K\rfloor$ while by choosing $N_k=\lfloor k^a\rfloor$
for $a=3+\delta$, then the oracle complexity can be made arbitrarily close to
optimal and is given by $\mathcal O(\epsilon^{-2{-}\delta/2})$. However, ({\bf
VS-APM}) imposes a stronger assumption on noise, as formalized next.}
\begin{corollary} {\bf (Rate and oracle complexity bounds with smooth $f$ for ({\bf VS-APM}))} Suppose {Assumptions~~\ref{smooth-f}, \ref{ass_error2},} and \ref{fistaass3} hold. Suppose $\gamma_k=\gamma\leq 1/2L$ for all $k$. \\
\noindent (i) Let  $N_k=\lfloor k^a\rfloor$ where $a=3+\delta$ and $\widehat C \triangleq {2\nu^2\gamma (a-2)\over a-3}+{4C^2 \over \gamma}$. Then the following holds.
\begin{align*}
\mathbb{E}[F(y_{{K}+1}-F(x^*))] & \leq \frac{\widehat C}{K^2} \mbox{ for all } K \mbox{ and } 
\sum_{k=1}^{K(\epsilon)} N_k  \leq {\mathcal O\left({1\over \epsilon^{2+\delta/2}}\right)}, 
\end{align*}
{ where } $\mathbb{E}[F(y_{{K(\epsilon)}+1})-F(x^*)]\leq \epsilon.$ \\
\noindent (ii) {Given a $K >$ 0}, let  $N_k=\lfloor k^2 K\rfloor$ where $a>3$ and $\tilde C 
\triangleq 2\nu^2\gamma+{4C^2\over \gamma}$. Then the following holds.
\begin{align*}
\mathbb{E}[F(y_{{K}+1}-F(x^*))] \leq  \frac{\tilde C}{K^2} \mbox{ and }  
\sum_{k=1}^{K} N_k  \leq  
\mathcal{O}\left({1\over {\epsilon^{2}}}\right), \mbox{ where } \mathbb{E}[F(y_{{K}+1})-F(x^*)]\leq \epsilon. 
\end{align*}
\end{corollary}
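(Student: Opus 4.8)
The plan is to deduce the corollary from Lemma~\ref{vs-apm smooth} by observing that when $f$ is $L$-smooth no smoothing is required: one takes $f_{\eta_k}\equiv f$, equivalently $\eta_k\equiv 1/L$, so that $F_{\eta_k}=F$ and the smoothability gap $\eta_k B^2$ is simply absent (this is why no $B^2$ appears in $\widehat C$ or $\tilde C$). The only hypotheses the proof of Lemma~\ref{vs-apm smooth} actually uses are that $\{\gamma_k\}$ be non-increasing and that $\gamma_k\le\eta_k/2$, the latter entering exactly when passing from \eqref{moreau a.s.} to \eqref{ineqk}; here $\gamma_k\equiv\gamma\le 1/(2L)=\eta_k/2$ satisfies both. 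Reading that lemma at index $K+1$ then gives
\[
\mathbb{E}[F(y_{K+1})-F(x^*)]\ \le\ \frac{2\gamma\nu^2}{K^2}\sum_{k=1}^{K}\frac{k^2}{N_k}\ +\ \frac{2C^2}{\gamma K^2},
\]
and everything after this is elementary bookkeeping of the two sums $\sum_k k^2/N_k$ and $\sum_k N_k$, as in Theorem~\ref{rate-sVS-APM}.

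For part (i) I would use $N_k=\lfloor k^a\rfloor\ge k^a/2$ (valid for all $k\ge1$) and $a=3+\delta$ with $\delta>0$, so that $\sum_{k=1}^{K}k^2/N_k\le 2\sum_{k=1}^{K}k^{2-a}\le 2\bigl(1+\int_1^\infty t^{2-a}\,dt\bigr)=2(a-2)/(a-3)$; inserting this above produces $\mathbb{E}[F(y_{K+1})-F(x^*)]\le\widehat C/K^2$ with $\widehat C$ of the stated form up to the usual constant juggling. For the oracle count, solving $\widehat C/K^2\le\epsilon$ gives $K(\epsilon)=\mathcal O(\epsilon^{-1/2})$, whence $\sum_{k=1}^{K(\epsilon)}N_k\le\sum_{k=1}^{K(\epsilon)}k^a\le\int_0^{K(\epsilon)+1}t^a\,dt=\mathcal O\bigl(K(\epsilon)^{a+1}\bigr)=\mathcal O\bigl(\epsilon^{-(a+1)/2}\bigr)=\mathcal O\bigl(\epsilon^{-2-\delta/2}\bigr)$.

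For part (ii) the template is identical: with $N_k=\lfloor k^2K\rfloor\ge k^2K/2$ one gets $\sum_{k=1}^{K}k^2/N_k\le 2$, hence $\mathbb{E}[F(y_{K+1})-F(x^*)]\le\tilde C/K^2$ with $\tilde C$ as stated; then $K(\epsilon)=\mathcal O(\epsilon^{-1/2})$ and $\sum_{k=1}^{K(\epsilon)}N_k\le K(\epsilon)\sum_{k=1}^{K(\epsilon)}k^2=\mathcal O\bigl(K(\epsilon)^4\bigr)=\mathcal O(\epsilon^{-2})$, the optimal level.

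The main obstacle is really the first step: checking that Lemma~\ref{vs-apm smooth} applies verbatim in the smooth, constant-steplength regime, i.e.\ that nothing in its proof relied on a strictly decreasing $\gamma_k$ or a genuinely shrinking $\eta_k$ and that the $\eta_k B^2$ term that elsewhere separates $F_{\eta_k}$ from $F$ vanishes here. Once that reduction is granted, the remaining work is two $p$-series/integral comparisons; the points needing care are the floor bound $\lfloor k^a\rfloor\ge k^a/2$ at small $k$ and the necessity of $\delta>0$ (equivalently $a>3$) for $\sum_k k^{2-a}$ to converge, which is precisely why the exponent is written $a=3+\delta$.
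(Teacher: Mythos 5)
Your proposal is correct and follows essentially the same route as the paper: the paper likewise obtains the bound $\mathbb{E}[F(y_{K+1})-F(x^*)]\le \tfrac{2\nu^2\gamma}{K^2}\sum_{k=1}^K k^2/N_k+\tfrac{4C^2}{\gamma K^2}$ "similar to the proof of Lemma~\ref{vs-apm smooth}" with $\delta_k=F(y_k)-F(x^*)$ (i.e., the smooth case with $1/\eta_k$ replaced by $L$ and $\gamma\le 1/(2L)$, exactly your $\eta_k\equiv 1/L$ specialization and with constant, hence non-increasing, $\gamma_k$), and then performs the same $p$-series/integral bound for $a=3+\delta$ and the same $K(\epsilon)=\mathcal{O}(\epsilon^{-1/2})$, $\sum_k N_k$ computations for both parts. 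The only differences are cosmetic constant bookkeeping (a factor of 2 from $\lfloor k^a\rfloor\ge k^a/2$), which the paper itself handles loosely.
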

\begin{proof}
{\bf (i)} Similar to the proof of Lemma \ref{vs-apm smooth}, by defining $\delta_k=F(y_k)-F(x^*)$ we can prove:
\begin{align*}
\mathbb{E}[F(y_{K+1})-F(x^*)] \leq  {2\nu^2\gamma\over K^2}\sum_{k=1}^{K}\frac{k^2}{k^{a}} 
 +  {4C^2\over \gamma K^2}.
\end{align*}
Let $N_k=\lfloor k^{a}\rfloor\geq {1\over 2} k^{a}$ and $\gamma_k=\gamma$. {Then}  we have that the following holds where $\widehat C \triangleq {2\nu^2\gamma (a-2)\over a-3}+{4C^2 \over \gamma}.$
\begin{align}\label{bound nk nonsmooth}
\mathbb{E}[F(y_{K+1})-F(x^*)] \leq  {2\nu^2\gamma\over K^2}\sum_{k=1}^{K}\frac{k^2}{k^{a}} 
 +  {4C^2\over \gamma K^2}\leq  {2\nu^2\gamma (a-2)\over (a-3)K^2}
 +  {4C^2\over \gamma K^2}=
{\frac{\widehat C}{K^2}}, 
\end{align}
where the first inequality follows from bounding the summation as follows:
\begin{align*}
\sum_{k=1}^{K} k^{2-a}=1+\sum_{k=2}^{K} k^{2-a}\leq 1+\int_1^{K} {x}^{2-a} d{x}={1\over a-3}-\frac{K^{3-a}}{a-3}+1\leq {1\over a-3}+1={a-2\over a-3}.
\end{align*}
{Suppose} $y_{K+1}$ satisfies $\mathbb{E}[F(y_{K+1})-F(x^*)]\leq \epsilon$,
implying that ${\widehat C\over K^2}\leq \epsilon$ or $K=\lceil\widehat
C^{1/2}~\slash~\epsilon^{1/2}\rceil$. If $\epsilon \leq \widehat C/2$, then the oracle complexity can be bounded as follows:\begin{align*}
\sum_{k=1}^K N_k\leq \sum_{k=1}^K k^{a}=\sum_{k=1}^{1+\sqrt{\widehat C/\epsilon}} k^{a}\leq \int_0^{2+\sqrt{\widehat C/\epsilon}} k^a da=\frac{{(2+\sqrt{\widehat C/\epsilon})^{1+a}}}{1+a}\leq {\left({\sqrt {\hat C}\over 2\sqrt\epsilon}\right)^{1+a} = \mathcal{O}\left(1\over \epsilon^{2+\delta/2}\right)}.
\end{align*}
{(ii) Let $N_k=\lfloor k^2 K\rfloor \geq {1\over 2} k^2 K$. Then similar to part (i), we may bound the expected sub-optimality as follows where $\tilde C \triangleq 2\nu^2\gamma+{4C^2\over \gamma}$.}
\begin{align*}
\mathbb{E}[F(y_{K+1})-F(x^*)] \leq  {2\nu^2\gamma\over K^2}\sum_{k=1}^{K}\frac{k^2}{k^2K}
 +  {4C^2\over \gamma K^2}={2\nu^2\gamma\over K^2} +  {4C^2\over \gamma K^2}\leq {\tilde C \over  K^2}.
\end{align*}
Since $K = \lceil \tilde C^{1/2} \slash \epsilon^{1/2}\rceil$, the oracle complexity may be bounded as follows:
\begin{align*}
\sum_{k=1}^K N_k\leq \sum_{k=1}^K k^{2}K={1\over 6} K^2(K+1)(2K+1)= {1\over 6}K^2(2K^2+3K+1)\leq K^4\leq \mathcal{O} \left({1\over \epsilon^{2}}\right) . 
\end{align*}
\end{proof}
\noindent {\bf Case 2: Deterministic nonsmooth convex optimization.}
{When the function $f$ in \eqref{main problem} is deterministic but possibly
nonsmooth, ~\cite{nesterov2005smooth} showed that by applying an
accelerated scheme to a suitably smoothed problem (with a fixed smoothing
parameter)  leads to a convergence rate of $\mathcal O(1/K)$. In contrast with
Theorem~\ref{rate-sVS-APM}, utilizing a fixed smoothing parameter leads to an
approximate solution at best and such a scheme is not characterized by
asymptotic convergence guarantees. In addition, we observe that the rate
statement for  \usv{the deterministic counterpart of ({\bf sVS-APM}), denoted by ({\bf s-APM}),} is global (valid for all $k$) while \usv{any statement with} constant smoothing
holds for the prescribed $K$. We observe that the rate statements by using an
appropriately chosen smoothing and steplength parameter matches that by using a
selecting a suitable smoothing and steplength sequence.} 
\begin{corollary}\label{cor-detns} {\bf (Iterative vs constant smoothing for deterministic nonsmooth convex optimization)} Consider \eqref{main problem} and assume $f(x)$ is a deterministic function. Suppose Assumption \ref{fistaass3} holds. 
\noindent {\bf (i)}  Iterative smoothing: {Suppose $\gamma_k=1/2k$ and $\eta_k=1/k$. Then, $F(y_{k+1})-F(x^*)\leq  {4C^2+B^2\over k},$ for all $k > 0.$}
\noindent {\bf (ii)} Fixed smoothing: For a given $K > 0$, suppose {$\eta_k=1/K$ and $\gamma_k=1/2K$. Then, $F(y_{K+1})-F(x^*)\leq  {4C^2+B^2 \over K}.$}
\end{corollary}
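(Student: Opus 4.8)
The plan is to observe that in the deterministic regime the stochastic oracle returns the \emph{exact} gradient of the smoothed objective, so that $\bar w_{k,N_k}\equiv 0$ and Assumption~\ref{ass_error2} holds with $\nu=0$. Consequently the batch sizes $N_k=\lfloor k^a\rfloor$ play no role (there is no variance to reduce), and both claims collapse to a deterministic accelerated proximal (FISTA-type) rate for the smoothed problem, combined with the $(1,B^2)$-smoothability bias correction $f_\eta\le f\le f_\eta+\eta B^2$. I would therefore not re-derive anything from scratch but instead specialize the results already established.

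For part (i), I would invoke Theorem~\ref{rate-sVS-APM}(i) verbatim with the stated choices $\eta_k=1/k$, $\gamma_k=1/(2k)$, $N_k=\lfloor k^a\rfloor$, $a>1$: setting $\nu=0$ kills the stochastic term $2\nu^2a/((a-1)K)$, and the constant $\bar C = 2\nu^2 a/(a-1)+4C^2+B^2$ reduces to $4C^2+B^2$, giving $\mathbb{E}[F(y_{k+1})-F(x^*)]\le (4C^2+B^2)/k$ for every $k>0$, which is exactly the asserted global rate. For part (ii), with the \emph{fixed} smoothing parameter, I would apply Lemma~\ref{vs-apm smooth} to the single function $F_{1/K}=f_{1/K}+g$ using the constant schedule $\eta_k\equiv 1/K$, $\gamma_k\equiv 1/(2K)$. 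One checks the hypotheses: $\{\gamma_k\}$ is (weakly) decreasing, $\gamma_k\le\eta_k/2$ holds with equality, Assumption~\ref{ass_error2} is trivial since $\nu=0$, and by $(1,B^2)$-smoothability $f_{1/K}$ is $K$-smooth so Assumption~\ref{fistaass3} holds for $F_{1/K}$. Evaluating the bound of Lemma~\ref{vs-apm smooth} at index $K+1$, with $\nu=0$ and $\gamma_K=1/(2K)$, yields $\mathbb{E}[F_{1/K}(y_{K+1})-F_{1/K}(x^*)]\le 2C^2/(\gamma_K K^2)=4C^2/K$. I then pass back to $F$ using $F_{1/K}\le F$ pointwise and $F(x^*)\le F_{1/K}(x^*)+B^2/K$, which together give $F(y_{K+1})-F(x^*)\le 4C^2/K + B^2/K = (4C^2+B^2)/K$, establishing the claim and showing the two constants coincide.

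The argument is essentially bookkeeping, so there is no deep obstacle. The one point requiring care is, in part (ii), confirming that the constant-smoothing schedule is a legitimate special case of the varying-$\eta_k$ framework of Lemma~\ref{vs-apm smooth}: namely that the constraint $\gamma_k\le\eta_k/2$ and the monotonicity of $\{\gamma_k\}$ used in its proof both survive (they do, with equality resp.\ constancy), and that the reindexing $K\mapsto K+1$ together with $\gamma_K=1/(2K)$ reproduces \emph{exactly} the $4C^2/K$ leading term while the smoothing bias $\eta_K B^2$ with $\eta_K=1/K$ contributes precisely $B^2/K$. The comparison with part (i) — that iterative smoothing with a diminishing $\eta_k$ gives the same $\mathcal{O}(1/k)$ constant as a fixed $\eta_k=1/K$ tuned to the horizon, but remains valid for \emph{all} $k$ rather than only at the prescribed $K$ — then follows simply by contrasting the two displayed bounds.
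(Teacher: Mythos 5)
Your proposal is correct and follows essentially the intended route: the paper gives no separate proof of Corollary~\ref{cor-detns} precisely because it is the $\nu=0$ specialization of Theorem~\ref{rate-sVS-APM}(i) for part (i) and of Lemma~\ref{vs-apm smooth} with the constant schedule $\eta_k=1/K$, $\gamma_k=1/(2K)$ plus the $\eta B^2$ smoothing bias for part (ii), with the constant $\bar C$ collapsing to $4C^2+B^2$ exactly as you compute. The only nit is in part (ii): the smoothability sandwich must be paired as $F(y_{K+1})\leq F_{1/K}(y_{K+1})+B^2/K$ and $F_{1/K}(x^*)\leq F(x^*)$ (you stated the two inequalities with the evaluation points swapped), a trivial fix since both directions follow from $(1,B^2)$-smoothability.
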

\blue{\begin{remark} {{By recalling that} $f_\eta(x)\triangleq 
\mathbb E[{\f}_\eta(x,\omega)]$, {by} using Theorem 7.47 in \cite{shapiro09lectures} (interchangeability of the derivative and the expectation), and noting that ${\f_{\eta}(\cdot,\omega)}$ is differentiable in $x$ for every $\omega$, 
 we have $\nabla 
f_\eta(x)=\nabla \mathbb E[{\f}_\eta(x,\omega)]=\mathbb E[\nabla 
{\f}_\eta(x,\omega)]\implies \mathbb E[\nabla f_\eta(x)-\nabla 
{\f}_\eta(x,\omega)]=0.$  
Therefore, such a gradient estimator is unbiased and our assumption holds. {We now derive bounds on the second moments for some common smoothings in Table~\ref{bd-sec-moment}.}} 
\begin{table}[htpb]
\tiny \hspace{-0.3in}
{\begin{tabular}{|c|c|c|c|c|} \hline
$\uss{\f}(x,\omega)$&$\uss{\f}_\eta(x,\omega)$&$\nabla \uss{\f}_\eta(x,\omega)$ & $\mathbb{E}[\|\nabla_x \uss{\f}_{\eta}(x,\omega)-\nabla_x f_{\eta}(x)\|^2]$\\ \hline
$\uss{\f}_{1}(x,\omega)=\lambda(\omega)\|x\|_1$& $\sum_{i=1}^n h_{\uss{\eta}}(x_i,\omega)$, where&  $[\nabla_{x_i}h_{\uss{\eta}}(x_i,\omega)]_{i=1}^n$, where& \\
&
 $  \uss{h_{\eta}}(x_i,\omega)$ = $\left\{\begin{array}{lr}
        \lambda^2(\omega){x_i^2\over 2\eta}, &\hspace{-0.9cm} \lambda(\omega)|x_i|<\eta\\
        \lambda(\omega)|x_i|-\eta/2, &  o.w.
        \end{array}\right\}$&   $ \nabla_{x_i} \uss{h_{\eta}}(x_i,\omega)$ = $\left\{\begin{array}{lr}
        \lambda^2(\omega){x_i\over \eta}, &\hspace{-0.6cm} \lambda(\omega)|x_i|<\eta\\
        \lambda(\omega)x_i/|x_i|, &  o.w.
        \end{array}\right\}$&{$4n\mathbb{E}[\lambda^2(\omega)]$} \\ \hline
$\uss{\f}_{2}(x,\omega)=\lambda(\omega)\|x\|_2$&$\sqrt{\lambda^2(\omega)\|x\|^2+\eta^2}-\eta$&${\lambda^2(\omega)x\over \sqrt{\lambda^2(\omega)\|x\|^2+\eta^2}}$&{$ 4 \mathbb{E}[\lambda^2(\omega)]$}\\ \hline
\tabincell{l}{$\uss{\f}_{3}(x,\omega)$ $=\displaystyle \max_{1 \leq i \leq n}\{h_i(x,\omega)\}$ \\ where 
$h_i(x,\omega) = v_i+s_ic(\omega)^Tx$} &$\eta \log\left(\sum_{i=1}^n
\exp({h_i(x,\omega)/ \eta})\right)$ &
${\sum_{i=1}^n \nabla_{x}
h_i(x,\omega)\exp({h_i(x,\omega)/ \eta})\over \sum_{i=1}^n\exp({h_i(x,\omega)/
\eta})}$&{$ 
4\mathbb{E}\left[\left(\displaystyle \max_{1 \leq i \leq n} \|s_i c(\omega)\|\right)^2\right]$},\\ &&&
\\ \hline
\end{tabular}}
\caption{Bounding the second moments for certain smoothings}
\label{bd-sec-moment}
\end{table}
\end{remark}}
\subsection{Almost-sure \uvv{C}onvergence}\label{sec:4.3}
While the previous subsection focused on providing rate statements for expected sub-optimality, we now consider the open question of whether the sequence of iterates produced by ({\bf sVS-APM}) \usv{converges} a.s. to a solution. Schemes employing a constant smoothing parameter preclude such guarantees. Proving a.s. convergence requires using the following lemma. 
\begin{lemma}[Supermartingale convergence lemma (\cite{Polyak87})] \label{almost sure}
Let $\{v_k\}$ be a sequence of nonnegative random variables, where $\mathbb
E{[v_0]}<\infty$ and let $\{\alpha_k\}$ and $\{\eta_k\}$ be deterministic scalar
sequences such that $0\leq \alpha_k \leq 1$ and $\eta_k\geq 0$ for all $k\geq
0$, $\sum_{k=0}^\infty\alpha_k=\infty$, $\sum_{k=0}^\infty \eta_k<\infty $, and
$\lim_{k\rightarrow \infty}{\eta_k\over \alpha_k}=0$, and $\mathbb
E{[v_{k+1}\mid \mathcal H_k]\leq (1-\alpha_k)v_k+\eta_k}$ a.s. for all $k\geq
0$. Then, $v_k\rightarrow 0$ a.s. as $k\rightarrow \infty$.
\end{lemma}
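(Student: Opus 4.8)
The plan is to split the argument into two parts: (a) show that $\{v_k\}$ converges almost surely to some nonnegative limit $v_\infty$, and (b) identify that limit as $0$. For part (a) I would absorb the summable perturbation into the iterate. Since $\sum_{j\ge 0}\eta_j<\infty$, set $r_k\triangleq\sum_{j\ge k}\eta_j$, so $r_k\downarrow 0$ and $r_k=\eta_k+r_{k+1}$, and define $u_k\triangleq v_k+r_k$. Then $u_k$ is $\mathcal F_k$-measurable, nonnegative, and $\mathbb E[u_0]=\mathbb E[v_0]+r_0<\infty$. From the hypothesis $\mathbb E[v_{k+1}\mid\mathcal F_k]\le(1-\alpha_k)v_k+\eta_k\le v_k+\eta_k$ one gets $\mathbb E[u_{k+1}\mid\mathcal F_k]\le v_k+\eta_k+r_{k+1}=u_k$, so $\{u_k\}$ is a nonnegative supermartingale and, by Doob's supermartingale convergence theorem, converges a.s. to an (integrable, a.s. finite) limit; since $r_k\to 0$ deterministically, $v_k=u_k-r_k$ also converges a.s. to some $v_\infty\ge 0$.

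For part (b) I would pass to expectations. Writing $a_k\triangleq\mathbb E[v_k]$, the tower property and the recursion give $a_{k+1}\le(1-\alpha_k)a_k+\eta_k$, and $a_k\le a_0+\sum_j\eta_j<\infty$ for all $k$. It then remains to prove the elementary deterministic fact that $a_{k+1}\le(1-\alpha_k)a_k+\eta_k$ with $0\le\alpha_k\le 1$, $\eta_k\ge 0$, $\sum_k\alpha_k=\infty$ and $\eta_k/\alpha_k\to 0$ forces $a_k\to 0$. Fix $\epsilon>0$, choose $K_0$ so that $\eta_k\le(\epsilon/2)\alpha_k$ for $k\ge K_0$, so that $a_{k+1}\le(1-\alpha_k)a_k+(\epsilon/2)\alpha_k$ thereafter. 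If $a_k>\epsilon$ this yields $a_{k+1}\le a_k-(\epsilon/2)\alpha_k$, so $a_k$ cannot remain above $\epsilon$ on an infinite stretch (it would be driven below $0$ since $\sum\alpha_k=\infty$), hence $a_{k_1}\le\epsilon$ for some $k_1\ge K_0$; and if $a_k\le\epsilon$ then $a_{k+1}\le(1-\alpha_k)\epsilon+(\epsilon/2)\alpha_k\le\epsilon$, so the bound is self-perpetuating. Thus $\limsup_k a_k\le\epsilon$ for every $\epsilon>0$, i.e. $a_k\to 0$. Finally, Fatou's lemma gives $\mathbb E[v_\infty]\le\liminf_k\mathbb E[v_k]=0$, and since $v_\infty\ge 0$ this forces $v_\infty=0$ a.s., so $v_k\to 0$ a.s.

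The hard part is really just the deterministic recursion lemma in part (b) --- specifically the dichotomy ``forced strict decrease while $a_k>\epsilon$'' versus ``trapped at or below $\epsilon$ once it drops there'', together with the use of $\sum_k\alpha_k=\infty$ to rule out the first regime lasting forever; the rest (checking the supermartingale property, the a priori integrability of the $v_k$, and the Fatou step) is routine bookkeeping. As an alternative route that avoids passing through expectations, one could invoke the Robbins--Siegmund theorem directly on $\{v_k\}$ to obtain simultaneously that $v_k$ converges a.s. and $\sum_k\alpha_k v_k<\infty$ a.s.; combined with $\sum_k\alpha_k=\infty$ the latter forces $\liminf_k v_k=0$ a.s., which with a.s. convergence again yields $v_k\to 0$ a.s.
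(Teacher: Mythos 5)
Your argument is correct. Note, however, that the paper does not prove this lemma at all: it is imported verbatim from Polyak's 1987 monograph and used as a black box in Proposition 1, so there is no in-paper proof to compare against. Your two-stage route is a clean self-contained substitute: the shift $u_k = v_k + \sum_{j\ge k}\eta_j$ turns the perturbed recursion into a genuine nonnegative supermartingale (integrability of each $v_k$ follows inductively from $\mathbb{E}[v_{k+1}]\le \mathbb{E}[v_k]+\eta_k$, as you note), Doob gives a.s. convergence, the deterministic dichotomy argument on $a_k=\mathbb{E}[v_k]$ — forced decrease by $(\epsilon/2)\alpha_k$ while $a_k>\epsilon$, which $\sum_k\alpha_k=\infty$ and $a_k\ge 0$ rule out indefinitely, versus absorption once $a_k\le\epsilon$ — gives $\mathbb{E}[v_k]\to 0$, and Fatou identifies the a.s. limit as zero. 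Your alternative via Robbins--Siegmund is also valid and arguably sharper: it delivers a.s. convergence of $v_k$ together with $\sum_k\alpha_k v_k<\infty$ a.s. in one stroke, and it shows that the hypothesis $\eta_k/\alpha_k\to 0$ is actually not needed once $\sum_k\eta_k<\infty$ and $\sum_k\alpha_k=\infty$ are assumed; that condition earns its keep only in variants of the lemma where $\{\eta_k\}$ is not summable. Either version would serve the paper's purposes in Proposition 1 without change.
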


\begin{proposition}\label{a.s. smoothing}{{\bf (a.s. convergence of {\bf (sVS-APM)})}} Suppose Assumptions~\ref{ass_error2} and \ref{fistaass3} hold and $\{{y}_k\}$ is a sequence generated by ({\bf sVS-APM}). Suppose $\gamma_k=k^{-b}  <  \eta_k$, where $b\in (0,1/2]$, $\{\eta_k\}$ is a decreasing sequence, and $N_k=\lfloor k^{a}\rfloor$ such that $(a+b)>1$. Then $\{{y}_k\}$ converges to {a} solution of~\eqref{main problem} a.s. .  
\end{proposition}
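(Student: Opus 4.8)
The plan is to derive a stochastic quasi-Fej\'er recursion for $\|y_k-x^*\|^2$ about an arbitrary solution $x^*$ of~\eqref{main problem}, pass it through the supermartingale convergence framework of Lemma~\ref{almost sure} (in its Robbins--Siegmund form), and then promote the resulting subsequential statement to full a.s.\ convergence via an Opial-type argument.

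\emph{Step 1: one-step recursion.} I would begin from inequality~\eqref{fistaF(x) smooth} in the proof of Lemma~\ref{vs-apm smooth}; with $x=x^*$ and $\gamma_k\le \eta_k/2$ it rearranges to
\begin{align*}
\|y_{k+1}-x^*\|^2 \le \|x_k-x^*\|^2 - 2\gamma_k\big(F_{\eta_k}(y_{k+1})-F_{\eta_k}(x^*)\big) - \tfrac12\|x_k-y_{k+1}\|^2 - 2\gamma_k\,\bar w_{k,N_k}^T(y_{k+1}-x^*).
\end{align*}
I would then (a) use $(1,B^2)$-smoothability to replace $F_{\eta_k}(y_{k+1})-F_{\eta_k}(x^*)$ by the nonnegative gap $F(y_{k+1})-F(x^*)$ up to an $O(\gamma_k\eta_k)$ smoothing surplus --- a surplus that vanishes for a minimizer-preserving smoothing such as the Moreau envelope (Lemma~\ref{feta}); (b) split $\bar w_{k,N_k}^T(y_{k+1}-x^*)=\bar w_{k,N_k}^T(y_{k+1}-x_k)+\bar w_{k,N_k}^T(x_k-x^*)$, absorb the first summand into $-\tfrac12\|x_k-y_{k+1}\|^2$ by Young's inequality at the cost of a $+O(\gamma_k^2\|\bar w_{k,N_k}\|^2)$ term, and retain the second, which is $\mathcal F_k$-conditionally centered since $x_k\in\mathcal F_k$; and (c) insert $x_k=y_k+\beta_{k-1}(y_k-y_{k-1})$ with $\beta_{k-1}=(\lambda_{k-1}-1)/\lambda_k\in(0,1)$ to rewrite $\|x_k-x^*\|^2$ in terms of $\|y_k-x^*\|^2$, $\|y_{k-1}-x^*\|^2$ and $\|y_k-y_{k-1}\|^2$. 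Taking conditional expectations and using $\mathbb E[\|\bar w_{k,N_k}\|^2\mid\mathcal F_k]\le \nu^2/N_k$, the errors are of orders $\gamma_k^2/N_k=k^{-(a+2b)}$ and $\gamma_k/N_k=k^{-(a+b)}$, both summable since $a+b>1$ and $b>0$, while $\sum_k\gamma_k\,\bar w_{k,N_k}^T(x_k-x^*)$ is a martingale with square-summable increments, hence a.s.\ convergent.

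\emph{Steps 2--4: supermartingale and Opial.} With the recursion cast as $\mathbb E[\Phi_{k+1}\mid\mathcal F_k]\le \Phi_k - 2\gamma_k\big(F(y_{k+1})-F(x^*)\big)+\varepsilon_k$, where $\Phi_k$ is a (second-order) functional of $\|y_k-x^*\|^2$ and $\sum_k\varepsilon_k<\infty$, Lemma~\ref{almost sure} gives that $\{\|y_k-x^*\|^2\}$ converges a.s.\ for every solution $x^*$ and that $\sum_k\gamma_k\big(F(y_{k+1})-F(x^*)\big)<\infty$ a.s.; since $b\le\tfrac12$ makes $\sum_k\gamma_k=\infty$, it follows that $\liminf_k\big(F(y_{k+1})-F(x^*)\big)=0$ a.s. The a.s.\ convergence of $\{\|y_k-x^*\|^2\}$ bounds $\{y_k\}$, so a subsequence satisfies $y_{k_j+1}\to\bar x$; lower semicontinuity of $F$ with $F(y_{k_j+1})\le F_{\eta_{k_j}}(y_{k_j+1})+\eta_{k_j}B^2$ then yields $F(\bar x)=F(x^*)$, i.e.\ $\bar x$ solves~\eqref{main problem}. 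Applying the convergence of $\{\|y_k-x^*\|^2\}$ with $x^*=\bar x$ and noting it tends to $0$ along $\{k_j+1\}$, its limit is $0$, whence $y_k\to\bar x$ a.s.

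\emph{Main obstacle.} The delicate point is Step~1(c): since $\beta_{k-1}\to 1$, eliminating $\|x_k-x^*\|^2$ produces a genuine second-order inequality $\phi_{k+1}-\phi_k\le \beta_{k-1}(\phi_k-\phi_{k-1}) + c\,\|y_k-y_{k-1}\|^2 - c'\,\|x_k-y_{k+1}\|^2 + \varepsilon_k$ that does not match the one-step supermartingale template verbatim, and whose displacement terms $\|y_k-y_{k-1}\|^2$ are not manifestly summable. Handling this requires squeezing enough negative $\|x_k-y_{k+1}\|^2$ (hence $\|y_{k+1}-y_k\|^2$, modulo $\|x_k-y_k\|^2=\beta_{k-1}^2\|y_k-y_{k-1}\|^2$) out of the proximal--gradient step, using the identity $\lambda_{k+1}^2-\lambda_{k+1}=\lambda_k^2$ to make the displacement contributions telescope or stay summable, and then passing to the compensated potential $\phi_k-\beta_{k-1}\phi_{k-1}$ (plus a correction) to obtain a bona fide nonnegative supermartingale; a.s.\ boundedness of $\{x_k\},\{y_k\}$ used when estimating the martingale increments is secured by a short bootstrapping step, and the smoothing surplus $\sum_k\gamma_k\eta_k$ is controlled via the decay of $\eta_k$ (trivially if the smoothing preserves minimizers).
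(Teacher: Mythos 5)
There is a genuine gap, and it sits exactly where you flag it: Step 1(c) and the ``main obstacle'' paragraph. After substituting $x_k=y_k+\beta_{k-1}(y_k-y_{k-1})$ with $\beta_{k-1}=(\lambda_{k-1}-1)/\lambda_k\to 1$, the quantity $\|x_k-x^*\|^2$ expands as $(1+\beta_{k-1})\|y_k-x^*\|^2-\beta_{k-1}\|y_{k-1}-x^*\|^2+(\beta_{k-1}+\beta_{k-1}^2)\|y_k-y_{k-1}\|^2$, so your recursion is second order with an inertial coefficient tending to $1$ and a positive displacement term $\|y_k-y_{k-1}\|^2$ that is indexed differently from the negative term $\|x_k-y_{k+1}\|^2$ you hope to cancel it against. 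Making these displacement contributions telescope or be summable under the standard rule $\lambda_{k+1}=\bigl(1+\sqrt{1+4\lambda_k^2}\bigr)/2$ is precisely the hard part (it is the same difficulty that makes convergence of FISTA iterates delicate even deterministically, and is usually resolved only by modifying the momentum rule); your proposal names the fix (``squeezing enough negative $\|x_k-y_{k+1}\|^2$,'' a ``compensated potential,'' a ``bootstrapping step'') but does not carry it out, and it is not evident it can be carried out in this stochastic, iteratively smoothed setting. A secondary mismatch: Lemma~\ref{almost sure} as stated requires \emph{deterministic} error sequences and delivers $v_k\to 0$; to retain the martingale cross term $\gamma_k\bar w_{k,N_k}^T(x_k-x^*)$ as you propose, you would need a Robbins--Siegmund-type lemma that the paper does not supply (fixable by citation, but it is not the cited tool).

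The paper avoids the momentum obstruction altogether by not working with $\|y_k-x^*\|^2$. It starts from the accelerated estimate-sequence inequality \eqref{bound lamk}, which already packages the momentum into the vector $u_k=(\lambda_k-1)y_k-\lambda_kx_k+x^*$ and the weights $\lambda_k^2$, and builds the Lyapunov function $v_k=\delta_k+\tfrac{1}{2\gamma_{k-1}\lambda_{k-1}^2}\|u_k\|^2$ with $\delta_k=F_{\eta_k}(y_k)-F_{\eta_k}(x^*)$. With $\gamma_k=k^{-b}$, $b\in(0,1/2]$, the one-step contraction factor satisfies $\alpha_k=1-\tfrac{\lambda_{k-1}^2\gamma_{k-1}}{\lambda_k^2\gamma_k}\geq (1-b)/k$, the noise contribution is bounded by a deterministic summable sequence of order $\gamma_k/N_k=k^{-(a+b)}$ after conditioning, and the smoothing bias is absorbed by passing to $\bar v_k=v_k+\eta_{k-1}B^2$, which is nonnegative by $(1,B^2)$-smoothability. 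Lemma~\ref{almost sure} then gives $\bar v_k\to 0$ a.s., hence $F(y_k)\to F(x^*)$ a.s.; note this is a statement about function values (your Opial-type Steps 2--4 aim at iterate convergence, which the paper's displayed argument does not actually reach either), so the unresolved inertial step is the load-bearing part of your plan and cannot be waved through. If you want to salvage your route, you would need either a modified momentum sequence for which the displacement terms provably telescope, or to switch to the paper's potential built from \eqref{bound lamk}.
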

\begin{proof}
From inequality~\eqref{bound lamk}, we have that the following holds. 
\begin{align*}
\gamma_k\delta_{k+1}
&\leq {\lambda_{k-1}^2\over \lambda_k^2}\gamma_k\delta_{k} +{1\over 2 \lambda_k^2}\left( \|u_{k}\|^2-\|u_{k+1}\|^2\right)+\left({\gamma_k\over {2\over \gamma_k}-{2\over \eta_k}}\right)\|\bar w_{k,N_k}\|^2 -{1\over \lambda_k} \bar w_{k,N_k}^T u_{k}\\
&\leq {\lambda_{k-1}^2\over \lambda_k^2}\gamma_{k-1}\delta_{k} +{1\over 2 \lambda_k^2}\left( \|u_{k}\|^2-\|u_{k+1}\|^2\right)+ \left({\gamma_k\over {2\over \gamma_k}-{2\over \eta_k}}\right)\|\bar w_{k,N_k}\|^2 -{1\over \lambda_k} \bar w_{k,N_k}^T u_{k}.
\end{align*}
Dividing both sides of the previous inequality by $\gamma_k$,  we obtain the following relationship.
\begin{align*}\delta_{k+1}+{1\over 2 \gamma_k\lambda_k^2}\|u_{k+1}\|^2&\leq {\lambda_{k-1}^2\over \lambda_k^2 \gamma_k}\gamma_{k-1}\delta_{k}+{1\over 2 \gamma_k \lambda_k^2}\|u_{k}\|^2  + \left({1\over {2\over \gamma_k}-{2\over \eta_k}}\right)\|\bar w_{k,N_k}\|^2-{1\over {\gamma_k} \lambda_k} \bar w_{k,N_k}^T u_{k}\\
&={\lambda_{k-1}^2\gamma_{k-1}\over \lambda_k^2\gamma_k}\left(\delta_k+{\|u_k\|^2\over 2\gamma_{k-1}\lambda_{k-1}^2}\right)+ \left({1\over {2\over \gamma_k}-{2\over \eta_k}}\right)\|\bar w_{k,N_k}\|^2-{1\over {\gamma_k} \lambda_k} \bar w_{k,N_k}^T u_{k}.
\end{align*}
By defining $v_{k+1}\triangleq \delta_{k+1}+{1\over 2\gamma_k\delta_k^2}\|u_{k+1}\|^2$ and $\alpha_k\triangleq 1-  {\lambda_{k-1}^2\gamma_{k-1}\over \lambda_k^2\gamma_k}$, we have the following {recursion}.
\begin{align} \notag
v_{k+1} & \leq (1-\alpha_k)v_k+\left({1\over {2\over \gamma_k}-{2\over \eta_k}} \right)\|\bar w_{k,N_k}\|^2 -{1\over {\gamma_k} \lambda_k} \bar w_{k,N_k}^T u_{k} \iff \\ 
 v_{k+1}+\eta_kB^2 & \leq (1-\alpha_k)(v_k+\eta_{k-1}B^2)+\eta_kB^2-(1-\alpha_k)\eta_{k-1}B^2+\left(\tfrac{1}{ {2\over \gamma_k}-{2\over \eta_k}} \right)\|\bar w_{k,N_k}\|^2
 \label{lyap} -{1\over {\gamma_k} \lambda_k} \bar w_{k,N_k}^T u_{k}.
\end{align}
{Let $\bar v_{k+1} \triangleq v_{k+1}+\eta_kB^2$. From $(1,B^2)$ smoothability and the decreasing nature of $\{\eta_k\}$, 
$$ 0 \leq F(y_{k+1}) - F(x^*) \leq F_{\eta_{k+1}}(y_{k+1}) - F_{\eta_{k+1}}(x^*) + \eta_{k+1} B^2 \leq  F_{\eta_{k+1}}(y_{k+1}) - F_{\eta_{k+1}}(x^*) + \eta_{k} B^2. $$ 
 Then \eqref{lyap} can be rewritten as  follows:
\begin{align*}
\bar v_{k+1}\leq (1-\alpha_k)\bar v_k+\eta_kB^2-(1-\alpha_k)\eta_{k-1}B^2+\left({1\over {2\over \gamma_k}-{2\over \eta_k}} \right)\|\bar w_{k,N_k}\|^2 -{1\over {\gamma_k} \lambda_k} \bar w_{k,N_k}^T u_{k}
\end{align*}}
{Recall by the} definition of $\lambda_k$, we have $\lambda_{k-1}^2={(2\lambda_k-1)^2-1\over 4}$ and ${k\over 2}\leq\lambda_k\leq k$, if $\gamma_k=k^{-b}$, $b\in (0,1/2]$, we obtain the following relationship.
\begin{align}\label{alpha}\alpha_k&\nonumber=1-{\lambda_{k-1}^2\gamma_{k-1}\over \lambda_k^2\gamma_k}=1-{\gamma_{k-1}(4\lambda_k^2-4\lambda_k)\over 4\lambda_k^2 \gamma_k}={\lambda_k^2\gamma_k-\gamma_{k-1}\lambda_k^2+\gamma_{k-1}\lambda_k\over \lambda_k^2\gamma_k}={\gamma_k-\gamma_{k-1}\over \gamma_k}+{\gamma_{k-1}\over\lambda_k\gamma_k}\\
&\geq {k^{-b}-(k-1)^{-b}\over k^{-b}}+{(k-1)^{-b}\over k^{1-b}}={k^{1-b}-(k-1)^{1-b}\over k^{1-b}}\geq {{(1-b)}\over k}, \quad {b\in(0,1/2]},\end{align}
where in the last inequality we use $b\in (0,1/2]$:
{\begin{align*}
& \quad k \left({k^{1-b}-(k-1)^{1-b}\over k^{1-b}}\right) 
  = k - k \left( {{k-1} \over k }\right)^{1-b} 
				 = k - k^b (k-1)^{1-b}= k - (k-1) \left( {k \over k-1}\right)^b  \\ 
				& 
				 = k - (k-1) \left( 1+ {1 \over k-1}\right)^b 
				 = k - (k-1) - {b} - \frac{b(b-1)}{2 ! (k-1)^2} - \frac{b(b-1)(b-2)}{3 ! (k-1)^3} - \hdots   \\
		& = (1-b)  + \frac{b(1-b)}{2! (k-1)^2} \left(1-\frac{ (2-b)}{3 (k-1)}\right) + 
+ \frac{b(1-b)(2-b)(3-b)}{4! (k-1)^4} \left(1-\frac{ (4-b)}{5(k-1)}\right) + \hdots \\
		& \geq (1-b), \mbox{ since } k \geq 2 \geq 1 + \max\left\{ \frac{2}{3},\frac{4}{5}, \frac{6}{7}, \hdots \right\}. 
\end{align*}}
{By taking conditional expectations and recalling that $\eta_k = c \gamma^k$ where $c > 1$, we obtain the following.}
\begin{align*}
\mathbb E[{\bar v_{k+1}}\mid \mathcal H_k]&\leq (1-\alpha_k) {\bar v_k}{+\eta_kB^2-(1-\alpha_k)\eta_{k-1}B^2} +\left({1\over {2\over \gamma_k}-{2\over \eta_k}}\right) {\nu^2\over N_k}\\&
\leq (1-\alpha_k) v_k {+\eta_kB^2-(1-\alpha_k)\eta_{k-1}B^2}+ \left({c \over 2(c-1)} \right) {\gamma_k\nu^2\over N_k}.
\end{align*}
If $\gamma_k=k^{-b}$ {where } $b\in (0,1/2]$ and $N_k=\lfloor k^a\rfloor $ {where} {$a+b>1$}, by Lemma~\ref{fistabound for floor}, {we have that} $\sum_{k=1}^\infty  {\gamma_k\nu^2\over N_k}<\infty$ and the following holds for $\eta_k=ck^{-b}$, $c>1$ and $b\in (0,1/2]$:
{\begin{align*}
\eta_k-(1-\alpha_k)\eta_{k-1}&=\eta_k-{\lambda_{k-1}^2\gamma_{k-1}\over \lambda_k^2\gamma_k}\eta_{k-1}=ck^{-b}-\left(1-{1\over \lambda_k}\right){c(k-1)^{-2b}\over k^{-b}}\\ &\leq ck^{-b}-\left(1-{1\over \lambda_k}\right)ck^{-b}\leq {2c\over k^{1+b}}
\implies \sum_{k=1}^\infty (\eta_kB^2-(1-\alpha_k)\eta_{k-1}B^2)<\infty.
\end{align*}}
  {Furthermore,} from \eqref{alpha}, {it follows that} $\sum_{k=1}^\infty \alpha_k=\infty$ and 
$$\lim_{k\rightarrow \infty} \left({1\over \alpha_k}\right) \left({c \over 2(c-1)} \right)\left(\nu^2\over
k^{a+b}\right) { \ \leq } \lim_{k\rightarrow \infty} \left({c \over 2(c-1)} \right) \left(\nu^2\over {(1-b)}
k^{a+b-1}\right)=0$$ for $b\in (0,1/2]$ and {$a+b>1$}. Additionally, we have the following:
\begin{align*}
& \lim_{k\rightarrow \infty}{\eta_kB^2-(1-\alpha_k)\eta_{k-1}B^2\over \alpha_k}=\lim_{k\rightarrow \infty}{ck^{-b}B^2-c(1-\alpha_k)({k-1})^{-b}B^2\over \alpha_k}\\
&\leq \lim_{k\rightarrow \infty} {ck^{-b}B^2-c(1-\alpha_k){k}^{-b}B^2\over \alpha_k}=\lim_{k\rightarrow \infty} {cB^2\over k^b }=0,
\end{align*}
where  $\eta_kB^2-(1-\alpha_k)\eta_{k-1}B^2\geq0$ can be concluded as follows. For any $b\in (0,1/2]$, we have:
\begin{align*}
{\lambda_{k-1}^2\over \lambda_k^2}=\left(1-{1\over \lambda_k}\right)\leq {k-1\over k}\leq {(k-1)^{2b}\over k^{2b}}& \implies {\lambda_{k-1}^2\over \lambda_k^2} {k^b\over (k-1)^b}\leq {(k-1)^b\over k^b}\implies {\lambda_{k-1}^2\gamma_{k-1}\over \lambda_k^2\gamma_k}\leq {\eta_k\over \eta_{k-1}}\\
&\implies (1-\alpha_k)\leq {\eta_k\over \eta_{k-1}}\implies \eta_k-(1-\alpha_k)\eta_{k-1}\geq 0.
\end{align*}
Therefore, Lemma~\ref{almost sure} can be applied and
{$\bar v_{k} = F_{\eta_k}(x_k)-F_{\eta_k}(x^*)+\eta_kB^2 \rightarrow 0$} a.s.. {By $(1,B^2)$ smoothness of $f$,} 
${\ 0 \ \leq \ } F(x_k)-F(x^*)\leq F_{\eta_k}(x_k)-F_{\eta_k}(x^*)+\eta_k B^2$,
implying that $F(x_k)\rightarrow
F(x^*)$ a.s.  
\end{proof}
{The next proposition provides a similar a.s.  convergence for ({\bf VS-APM})
that can accommodate structured nonsmooth optimization where $f(x)$ is a smooth
merely convex function. The proof of this result is similar to Proposition
\ref{a.s. smoothing}, but $\delta_k$ in this case is defined as $\delta_k
=F(y_k)-F(x^*)$. }  
\begin{proposition}{{\bf (Almost sure convergence theory for {\bf (VS-APM)})}} 
{ Suppose Assumptions~\ref{smooth-f},~\ref{ass_error2}, and \ref{fistaass3}
hold.  Suppose $\{{y}_k\}$ defines a sequence generated by {\bf (VS-APM)}}.
Suppose {$\gamma_k=\gamma\leq 1/(2L)$ and
$N_k=\lfloor k^{a}\rfloor$ for $a>1$}. Then $\{{y}_k\}$ converges to {a} solution of \eqref{main
problem} \usv{almost surely}.   
\end{proposition}
section{Numerical \uvv{R}esults}
We now compare the performance of {\bf (mVS-APM)} and {\bf (sVS-APM)} with existing solvers on Matlab running on a 64-bit macOS 10.13.3 with Intel i7-7Y75 @1.4GHz with 16GB RAM.

{\noindent {\bf 1. {\bf mVS-APM}: Strongly convex and nonsmooth $f$.}

{\bf Example 1.} Consider the following constrained problem.
\begin{align}\label{example2 sc}
\min_{x\in [-1,1]} f(x), \mbox{where} f(x) \triangleq \mathbb E\left[{1\over 2}x^TA(\omega)x+\beta(\omega)^Tx+\lambda(\omega)\|x\|_1\right],
\end{align}
$A(\omega)=\bar A+W\in \mathbb R^{n\times n}$ and the elements of $W$
have an i.i.d. normal distribution with mean zero and {standard deviation (std) $0.1$}.
Similarly, $\beta(\omega)=\bar \beta+w\in \mathbb R^n$, where $w$ is a random
vector. {Since, tractable prox evaluations are not available for
\eqref{example2 sc}, we compute approximate gradients $\nabla_x f_{\eta}$  
using {\bf (SSG)}}. \afo{We set $N_k = \lfloor \rho^{-k}\rfloor$, where $\rho \triangleq \left(1-{1\over
{2}a\sqrt{\tilde \kappa}}\right)$ and $a = 2.01.$} Using a budget of $1e5$ and $10$ replications, we provide results in Table \ref{example_sc_tab_ex2} (L) while 
Figure \ref{nonsmooth_sc_ex2} shows the behavior of {\bf (mVS-APM)}
with different smoothing parameters $\eta$ versus {\bf (SSG)}.   When the strong convexity
modulus $\mu$ is small, {\bf mVS-APM} performs significantly better than 
{\bf (SSG)} and is far more stable. For instance, when $\eta=1$,  {\bf (mVS-APM)}
terminates with an empirical error of approximately $4.8e$-$3$ and $5.5e$-$3$ for
$\mu=1$ and $\mu=1e$-$4$ while corresponding errors for {\bf (SSG)} are
$7.8e$-$3$ to $6.3$. {As one can see, $\eta=1$ for  {\bf (mVS-APM)} seems to be
a reasonable practical choice for different problem settings. Note that in this
table, $\eta^*$ is chosen according to Lemma~\ref{char-eta} {where we note
that as $\mu \ll 1$, the benefit of utilizing $\eta^*$ is muted.} {Next, we
consider the unconstrained variant \eqref{example2 sc}, where $x\in
\mathbb R^n$. Since the subgradient is unbounded, we use
unaccelerated method {\bf (mVS-PM)}. In Table \ref{example_sc_tab_ex2}
(R), the behavior of {\bf (mVS-PM)} is compared with {\bf (SSG)} for different choices of $\mu$. As suggested after Theorem \ref{th-rate-itercomp-sc-sgd-general}, we
set $\eta=\tfrac{1}{\mu}+1e$-$3>\tfrac{1}{\mu}$. }
}
\begin{figure}[htb]
\centering
	\includegraphics[scale=0.12]{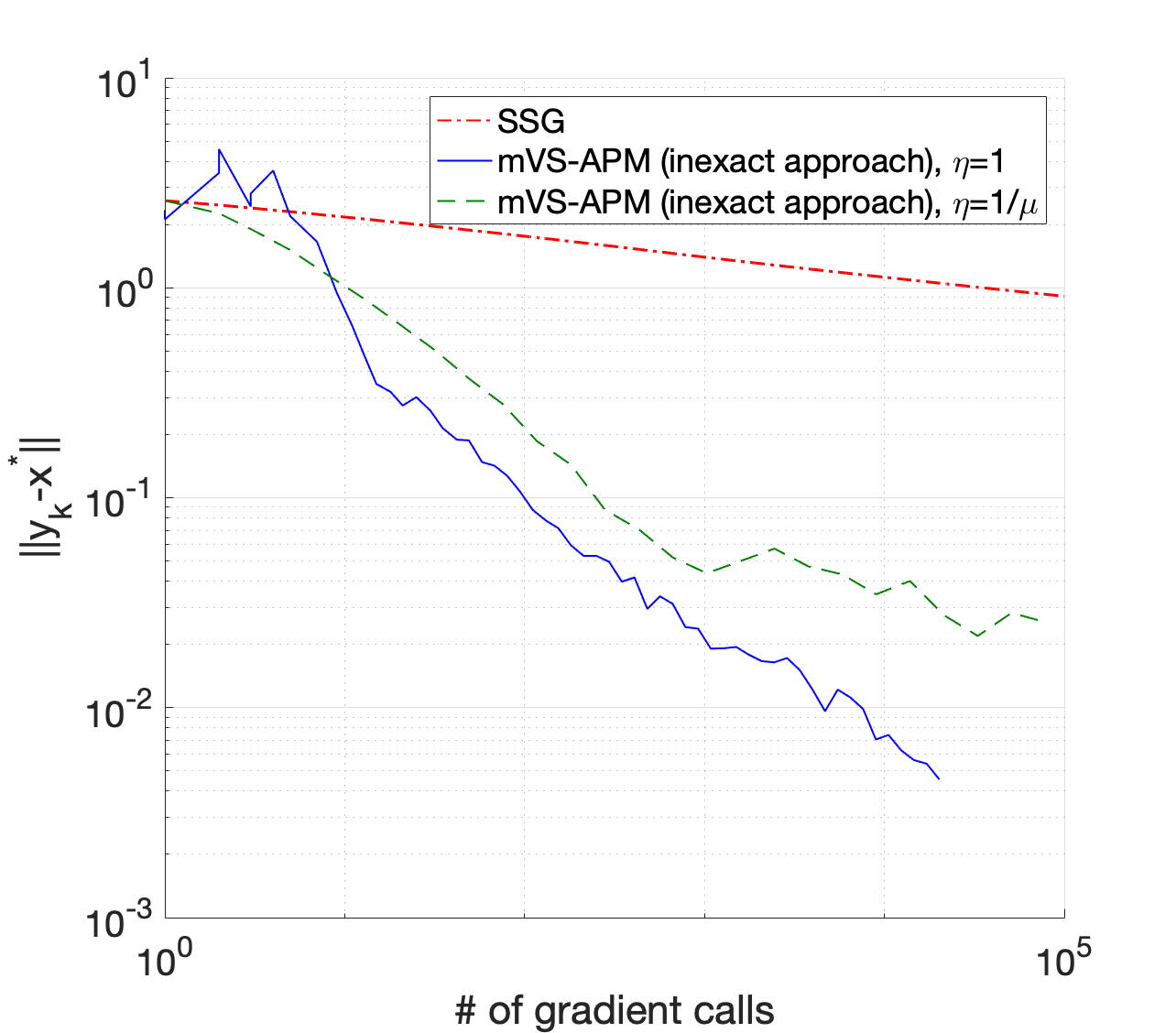}
	\captionof{figure}{Example~1: ({\bf mVS-APM}) vs SSG for $\mu=0.1$}
	\label{nonsmooth_sc_ex2}
\end{figure}
\begin{table}[htbp]
	\centering
\tiny

\begin{tabular}{|c|c||c|c|c|c|} \hline 
&{\bf SSG}&\multicolumn{4}{|c|}{$\|y_k-x^*\|$ for \bf mVS-APM}\\ \hline \hline
$\mu$&$\|y_k-x^*\|$&$\eta=\eta^*$&$\eta=0.1$&$\eta=1$&$\eta=10$\\ \hline
1&7.8609e-4&2.8078e-1&2.2150e-2&4.7893e-3&1.9443e-2\\ \hline
1e-1&9.9114e-1&3.3207e-3&3.7247e-2&5.8973e-3&1.8865e-2\\ \hline
1e-2&3.0611&3.7218e-2&8.3083e-2&7.3432e-3&3.6886e-2\\ \hline
1e-3&4.0682&1.3893&1.7692e-1&4.7901e-3&5.2147e-2\\ \hline
1e-4&6.3783&2.7269&4.7065e-1&5.5248e-3&6.3872e-2\\ \hline

 	\end{tabular}
\begin{tabular}{|c|c||c|} \hline 
&{\bf SSG}& {\bf mVS-PM}\\ \hline \hline
$\mu$&$\|y_k-x^*\|$&$\|y_k-x^*\|$\\ \hline
1&2.0847e-1&3.0971e-2\\ \hline
1e-1&2.4283&9.5149e-2\\ \hline
1e-2&4.2409&1.5115e-1\\ \hline
1e-3&4.4784&1.8033e-1\\ \hline
1e-4&4.5028&1.7261e-1\\ \hline

 	\end{tabular}
	\caption{Example 1: {\bf mVS-APM}vs SSG (L),  {\bf mVS-PM} vs SSG (R)}
\label{example_sc_tab_ex2}
\end{table}
{In Table \ref{sc_tab_ex1}, we compare {\bf (mVS-APM)} with {\bf (SSG)} for
different choices of standard deviation of noise and dimension ($n$). In Table
\ref{sc_tab_ex1} (L), we set $\mu=0.1$ and $n=20$ while in Table \ref{sc_tab_ex1}
(R), we set $\mu=0.1$ and std. dev. is $0.1$. We {run both schemes with} 
total {budget in subgradient evaluations of} $1$e$5$ and 10 replications and observe that {\bf (mVS-APM)}
outperforms {\bf (SSG)} .  }\begin{table}[ht]
	\centering
\tiny

\begin{tabular}{|c|c|c||c|c|c|} \hline 
&\multicolumn{2}{|c||}{\bf SSG}&\multicolumn{3}{|c|}{\bf mVS-APM}\\ \hline \hline
std.&$\|y_k-x^*\|$&time&$\eta$&$\|y_k-x^*\|$&time\\ \hline
1e+1&1.6691&5.8269&1&5.6007e-1&2.9858\\ 
1&9.4759e-1&5.9375&1&5.1574e-2&2.9925\\ 
1e-1&9.1148e-1&5.9096&1&5.8973e-3&3.8961\\ 
1e-2&9.1285e-1&5.9444&1&5.7294e-4&3.0362\\ 
\hline
\end{tabular}
\begin{tabular}{|c|c|c||c|c|c|} \hline 
&\multicolumn{2}{|c||}{\bf SSG}&\multicolumn{3}{|c|}{\bf mVS-APM}\\ \hline \hline
n&$\|y_k-x^*\|$&time&$\eta$&$\|y_k-x^*\|$&time\\ \hline
20&9.1148e-1&5.9096&1&5.8973e-3&3.8961\\
30&1.5326&6.117&1&5.9034e-3&3.2213\\ 
40&8.5934e-1&6.2494&1&6.0096e-3&3.6658\\ 
50&3.6236&6.4209&1&6.3496e-3&3.3903\\ 
\hline
 	\end{tabular}
	\caption{Example 1: Comparing {\bf mVS-APM} vs SSG: different std (L), different n (R)}
\label{sc_tab_ex1}
\end{table}

{\bf Example 2.} We revisit this comparison using a stochastic utility problem.
 \begin{align*}
\min_{\|x\|\leq 1} \mathbb E\left[\phi\left(\sum_{i=1}^n \left({i\over n}+\omega_i\right)x_i\right)\right]+{\mu\over 2}\|x\|^2, \end{align*}
where $\phi(t) \triangleq \max_{1\leq j\leq m}(v_i+s_it)$, 
$\omega_i$ are iid normal random variables with mean zero and variance one and
$v_i, s_i \in (0,1)$. Table \ref{sc_tab_ex2} shows similar behavior as in
Example 1. {In Table \ref{sc_tab_ex3}, we compare {\bf(mVS-APM)} with {\bf (SSG)} for different choices of std. dev. and dimension ($n$). In
Table \ref{sc_tab_ex3} (L), we set $\mu=0.1$ while $n=20$ and in Table
\ref{sc_tab_ex3} (R), we set $\mu=0.1$ and std. dev. is $ 1$. Similar
to {\bf Example 1}, {\bf (mVS-APM)} outperforms {\bf (SSG)} in all cases.  }

\begin{table}[ht]
	\centering
\tiny

\begin{tabular}{|c|c|c||c|c|c|} \hline 
&\multicolumn{2}{|c||}{\bf SSG}&\multicolumn{3}{|c|}{\bf mVS-APM}\\ \hline \hline
$\mu$&$\|y_k-x^*\|$&time&$\eta$&$\|y_k-x^*\|$&time\\ \hline
1&4.4908e-3&4.3883&$1/\mu=1$&5.8314e-3&1.5191\\ \hline
1e-1&2.7134e-1&3.8794&1&1.0102e-2&1.1964\\ 
1e-2&8.7266e-1&3.9742&1&1.8236e-2&1.2065\\ 
1e-3&9.8723e-1&4.0129&1&3.8619e-2&1.1510\\ 
1e-4&9.9872e-1&4.0684&1&7.1652e-2&1.1490\\ 
\hline
 	\end{tabular}
	\caption{Example 2: Comparing {\bf (mVS-APM)} vs {\bf (SSG)}}
\label{sc_tab_ex2}
\vspace{-0.2in}
\end{table}
\begin{table}[htb]
\tiny
\centering
\begin{tabular}{|c|c|c||c|c|c|} \hline 
&\multicolumn{2}{|c||}{\bf SSG}&\multicolumn{3}{|c|}{\bf mVS-APM}\\ \hline \hline
std.&$\|y_k-x^*\|$&time&$\eta$&$\|y_k-x^*\|$&time\\ \hline
1e+1&9.8253e-1&3.8733&1&9.6709e-1&1.1661\\ 
1&2.7134e-1&3.8794&1&1.0102e-2&1.1964\\ 
1e-1&2.1394e-1&3.9304&1&8.6589e-3&1.1083\\ 
1e-2&2.1813e-1&3.9134&1&1.1027e-1&1.1270\\ 
\hline
\end{tabular}
\begin{tabular}{|c|c|c||c|c|c|} \hline 
&\multicolumn{2}{|c||}{SSG}&\multicolumn{3}{|c|}{mVS-APM}\\ \hline \hline
n&$\|y_k-x^*\|$&time&$\eta$&$\|y_k-x^*\|$&time\\ \hline
20&2.7134e-1&3.8794&1&1.0102e-2&1.1964\\ 
30&3.5948e-1&4.0277&1&1.2010e-2&1.2594\\ 
40&5.3537e-1&4.0418&1&7.4431e-3&1.3467\\ 
50&2.6880e-1&4.1198&1&8.2670e-3&1.3452\\ 
\hline
 	\end{tabular}
	\caption{Example 2: Comparing {\bf mVS-APM} vs SSG: different std (L), different n (R)}
\label{sc_tab_ex3}
\end{table}

\noindent {\bf 2. ({\bf sVS-APM}). Convex and smoothable $f$.}

{\bf Example 4.} In this setting, we  compare the performance of ({\bf sVS-APM}) for merely convex problems on Example 2 with $\mu = 0$.  The $\delta$-smoothed approximation of $\phi(t)$ provided by~\cite{beck12smoothing} is given by $\phi_\delta(t)=\delta \log\left(\sum_{i=1}^m e^{(v_i+s_it)/\delta}\right)$.  In Table \ref{determ_smooth}, we generate $20$ replications for ({\bf sVS-APM})
with fixed and diminishing smoothing sequences  with $\eta_k=\delta_k/2$, $N_k=\lfloor
k^{3.001}\rfloor $, and sampling budget is $1$e$6$. In Figure
\ref{moreau_iter}, we compare trajectories for ({\bf sVS-APM}) with those for constant smoothing for $n = 200$.
\begin{table}[ht]
	\centering

\tiny
	\begin{tabular}{|c|c|c|c|c|c|} \hline 
		&  &&sVS-APM&&Fixed smooth.\\ \hline
	$n$& $m$ & $\delta_k$ &$\mathbb E[f(y_k)-f^*]$ &$\delta$ & $\mathbb E[f(y_k)-f^*]$ \\ \hline \hline 
20&10&$1/k$&1.832e-4&$1/K$&3.455e-3\\ 
	&&$1/(2k)$&3.014e-3&$1/(2K)$&2.157e-2\\ 
	&&$1/(3k)$&1.269e-2&$1/(3K)$&6.079e-2\\ \hline 
		100&25&$1/k$&1.944e-3&$1/K$&3.126e-2\\ 
	&&$1/2k$&1.181e-2&$1/2K$&5.130e-2\\
	&&$1/3k$&2.411e-2&$1/3K$&5.817e-2\\ \hline
	200&10&$1/k$&1.067e-4&$1/K$&4.695e-3\\ 
	&&$1/2k$&5.173e-3&$1/2K$&3.957e-2\\
	&&$1/3k$&1.594e-2&$1/3K$&6.929e-2\\ \hline
 	\end{tabular}
\caption{Example 4: Comparing {\bf (sVS-APM)} with fixed smoothing}
\label{determ_smooth}
\end{table}
\begin{figure}[htb]
\begin{minipage}[b]{0.32\linewidth}
\centering
	\includegraphics[scale=0.11]{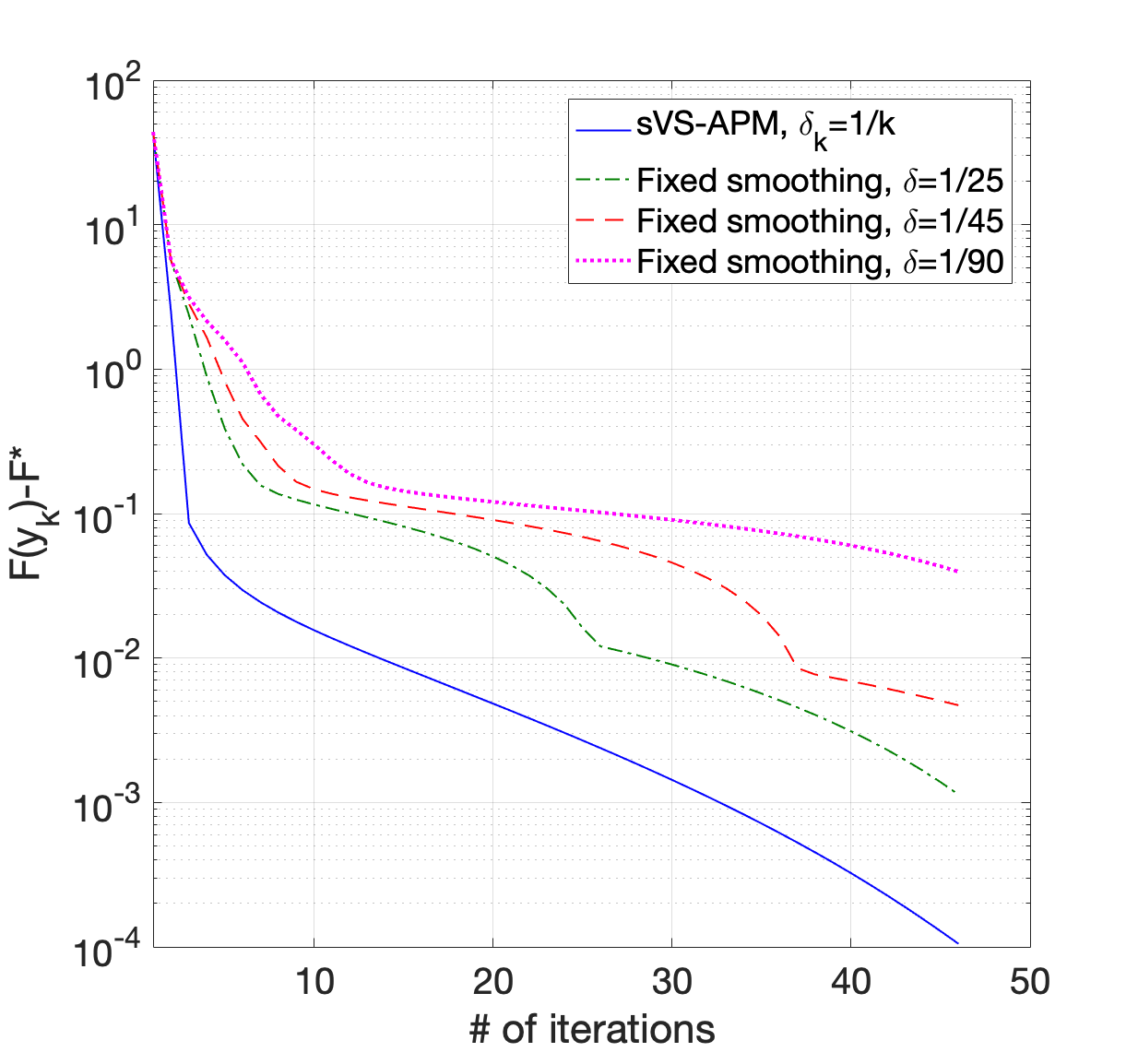}
	\captionof{figure}{Example 4: ({\bf sVS-APM}) vs fixed smoothing; $n=200$}
	\label{moreau_iter}
	\end{minipage}
\begin{minipage}[b]{0.32\linewidth}
	\includegraphics[scale=0.28]{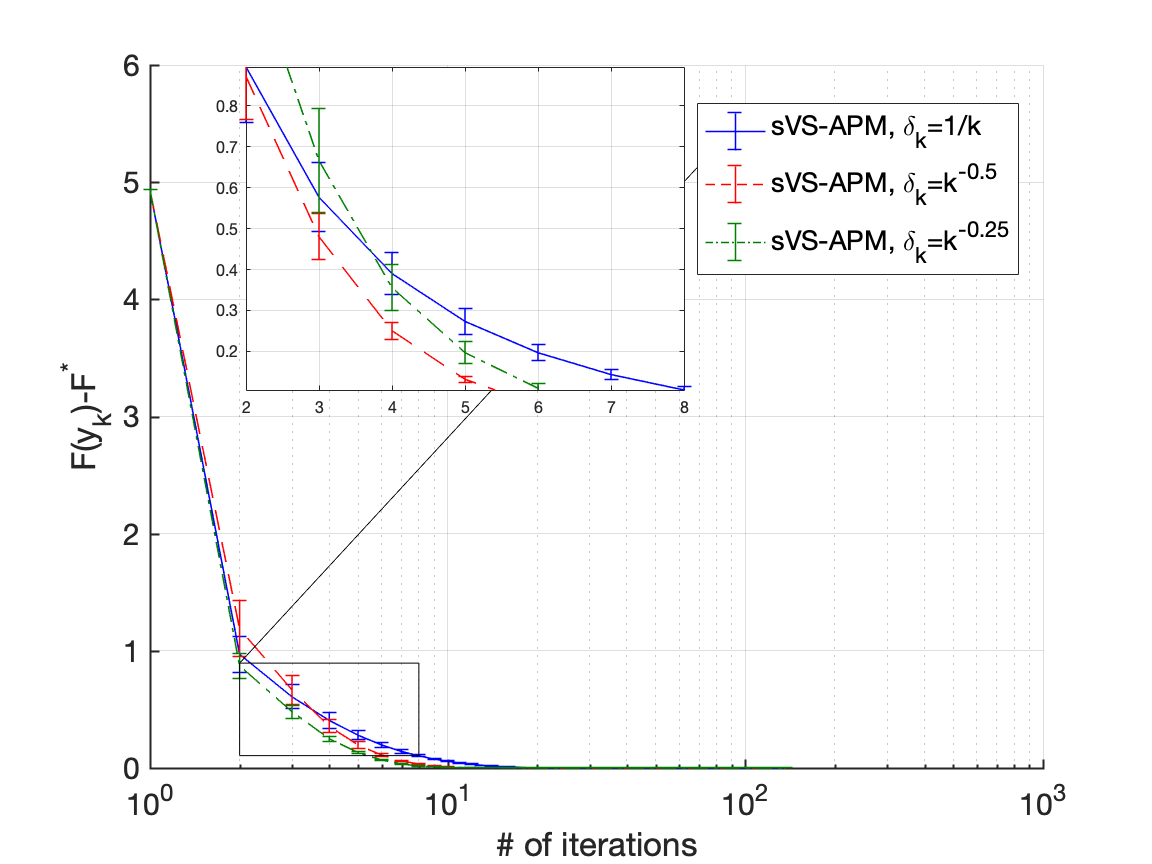}
	\caption{a.s. convergence for {\bf (sVS-APM)},  $N_k=\lfloor k^{3.001}\rfloor$, $\nu^2 =5$.}
	\label{almost sure1}
\end{minipage}
\begin{minipage}[b]{0.32\textwidth}
	\includegraphics[scale=0.28]{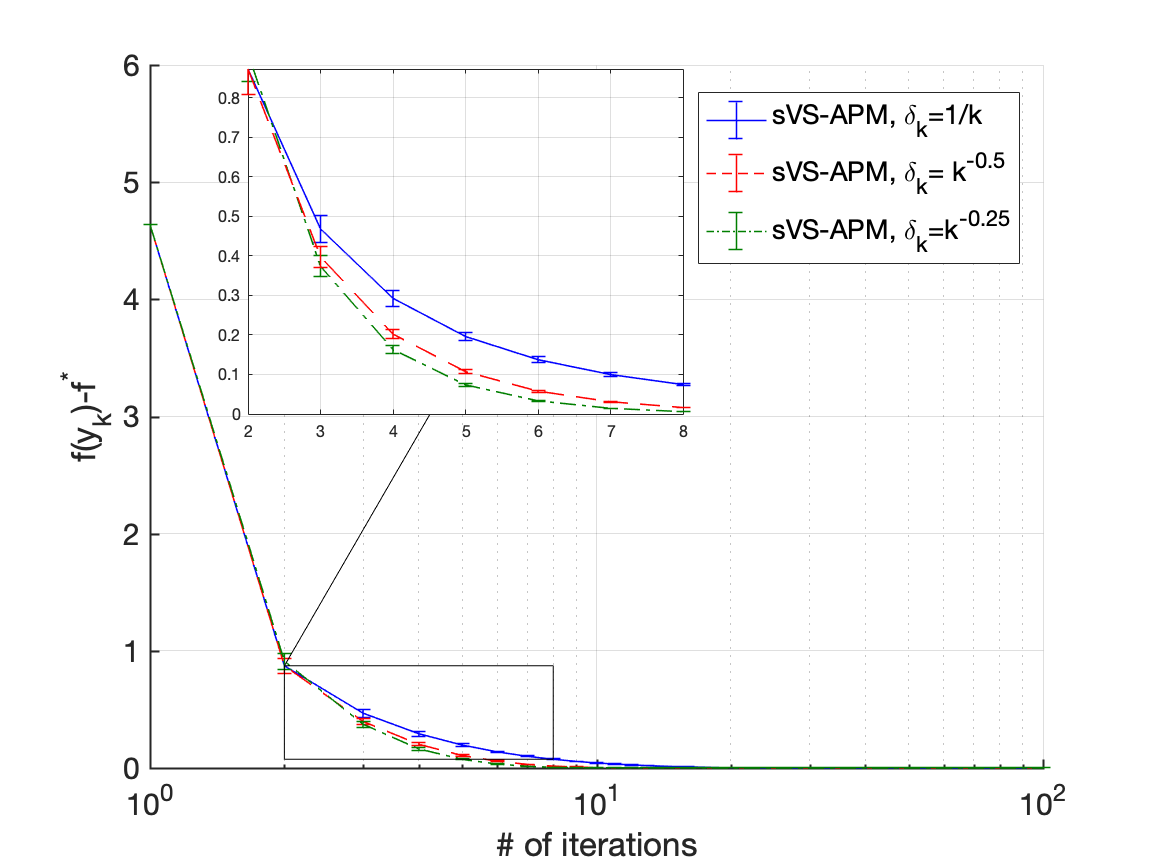}
	\caption{a.s. convergence for {\bf (sVS-APM)},  $N_k=\lfloor k^{3.001}\rfloor$, $\nu^2 =2$.}
	\label{almost sure}
\end{minipage}

\end{figure}

\noindent {\bf Key observations.} The empirical behavior of ({\bf
sVS-APM}) appears to be better on this test problem. One rationale for this may
be drawn from noting that ({\bf sVS-APM}) allows for  larger steplengths early
(since $\eta_k \leq \delta_k$) on while in fixed smoothing technique, $\eta_k \leq
\delta_k$ (where $\delta_k$ may be quite small). This can be seen in the trajectories
where early progress by the iterative smoothing scheme can be observed. A larger $\delta_k$ allows for larger steplengths but leads to a coarser approximation of the original problem while smaller $\delta_k$ leads to poorer progress but better approximations (See Table~\ref{determ_smooth} and Figure~\ref{moreau_iter}). 

\noindent {\bf 4. a.s. convergence. }  Next, we
implemented {\bf sVS-APM} on the stochastic utility problem
 with $n = 20$ and $m = 10$ for different choices of the smoothing sequences. Specifically, we
allow $\delta_k$ to be $\delta_k\in\{1/k,1/\sqrt{k},1/k^{0.25}\}$ (where $\delta_k =
1/k$ is required for convergence in mean and $\delta_k = 1/k^b$ with $b \in
(0,1/2]$ for a.s. convergence). We employ $N_k=\lfloor
k^{3.001}\rfloor$. For each experiment, the mean of 20 replications and their
$95\%$ confidence intervals are plotted in Figure~\ref{almost sure1} and~\ref{almost sure}. It can be
seen that when $\delta_k \to 0$ at a slower rate as mandated by the requirement of
the a.s. convergence result, the confidence bands are tighter, becoming 
more apparent in Figure~\ref{almost sure1}  where the variance is $5$. Furthermore, our numerical studies have revealed that even for less aggressive choices of $N_k$ such as when $N_k = k^a$ and $a >1$, the trajectories show the desired behavior in accordance with Prop.~\ref{a.s. smoothing}. 

\section{Concluding \uvv{R}emarks} Drawing motivation from the generally poor
behavior of {\bf (SSG)} schemes on general (rather than structured) nonsmooth
stochastic convex optimization problems, we develop two sets of accelerated
proximal variance-reduced schemes, both of which rely on a variable sample-size
accelerated proximal method {\bf (VS-APM)} for smooth convex problems. {In
    nonsmooth strongly convex regimes, we \usv{present three sets of schemes}, each of
which produces linearly convergent sequences and is characterized by an overall complexity in
subgradients (or proximal evaluations in the third case) that is optimal (or
near-optimal). First, in compact domains, {we propose} {\bf (mVS-APM)},
{an avenue that} requires applying ({\bf VS-APM}) on the Moreau envelope of
$F(x)$ where increasingly exact gradients are computed via an inner {\bf (SSG)}
scheme. Second, in unbounded domains, we apply an unaccelerated variable
sample-size proximal method {\bf (VS-PM)} which also relies on {\bf (SSG)} for
approximating gradients to increasing accuracy. 
When $\uss{\f}(\uss{\cdot},\omega)$ is smoothable and convex, our smoothed ({\bf VS-APM}) scheme (or
{\bf sVS-APM}) admits optimal rate and oracle complexity. Our findings, when
specialized to the smooth and convex $f$, provide an optimal accelerated rate of
$\mathcal{O}(1/K^2)$ with optimal oracle complexity matching findings by
~\cite{ghadimi2016accelerated} and ~\cite{jofre2017variance}. When $f$ is
deterministic, our rate matches that obtained by ~\cite{nesterov2005smooth} but
does so while providing asymptotically convergent schemes. Preliminary numerics
suggest that the schemes compare well with existing techniques both in terms of
complexity as well as in terms of sensitivity to problem parameters.   

\bibliographystyle{siam} 
\bibliography{demobib,wsc11-v02}
\section{Appendix}
\begin{lemma}\label{fistabound for floor}
For any real number $y\geq 1$ we have that:
$\lfloor y \rfloor\geq \left \lceil {1\over 2} y\right\rceil.$
\end{lemma}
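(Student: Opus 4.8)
The plan is elementary: set $n \triangleq \lfloor y \rfloor$ and work directly from the defining sandwich $n \le y < n+1$ together with the hypothesis $y \ge 1$. First I would note that $y \ge 1$ forces $n = \lfloor y\rfloor \ge 1$, since the floor of a real number at least $1$ is an integer at least $1$. The crux is the inequality $n+1 \le 2n$, which holds precisely because $n \ge 1$; chaining this with $y < n+1$ yields $y < 2n$, equivalently $y/2 < n$.

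From there I would invoke the definition of the ceiling: $\lceil y/2 \rceil$ is the least integer that is $\ge y/2$, so any integer $\ge y/2$ is an upper bound for it. Since $n$ is an integer with $n > y/2$, we get $\lceil y/2 \rceil \le n = \lfloor y \rfloor$, which is exactly the asserted bound $\lfloor y\rfloor \ge \lceil \tfrac12 y\rceil$. Thus the full argument is: (i) $n \le y < n+1$; (ii) $y \ge 1 \Rightarrow n \ge 1 \Rightarrow n+1 \le 2n$; (iii) hence $y < 2n$, so $y/2 < n$; (iv) therefore $\lceil y/2\rceil \le n = \lfloor y\rfloor$.

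I do not anticipate any real obstacle here. The only place that warrants a word of care is step (ii)--(iii): the bound $n+1 \le 2n$ genuinely consumes the hypothesis $y \ge 1$, and the statement is false without it (e.g. $y = 1/2$ gives $\lfloor y\rfloor = 0 < 1 = \lceil y/2\rceil$). One could optionally remark that equality in the lemma occurs exactly when $y \in [1,2)$, but since the lemma is only used to pass from $N_k = \lfloor \rho^{-k}\rfloor$ (or $\lfloor k^a\rfloor$) to a comparison with $\tfrac12$ of the exponent, the inequality as stated suffices and no refinement is needed.
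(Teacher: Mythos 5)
Your proof is correct, and it takes a genuinely different (and arguably cleaner) route than the paper's. The paper sets $T=\lfloor y\rfloor$, writes $y=T+\epsilon$ with $\epsilon\in(0,1)$, and splits into cases according to the parity of $T$: for even $T$ it computes $\left\lceil \tfrac12 y\right\rceil=\tfrac{T}{2}+1\leq T$, and for odd $T$ it computes $\left\lceil \tfrac12 y\right\rceil=\tfrac{T+1}{2}\leq T$ (strictly speaking this parametrization omits the case of integer $y$, i.e.\ $\epsilon=0$, though that case is immediate). Your argument avoids the parity split entirely: from $n\leq y<n+1$ and $n\geq 1$ you get $y<n+1\leq 2n$, hence $y/2<n$, and since $n$ is an integer the definition of the ceiling gives $\lceil y/2\rceil\leq n=\lfloor y\rfloor$. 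This is uniform in $y$ (integers included), it isolates exactly where the hypothesis $y\geq 1$ enters, and your counterexample $y=1/2$ confirms the hypothesis is needed. One small inaccuracy in your optional aside: equality does not occur exactly on $[1,2)$ --- for instance $y=2.5$ gives $\lfloor y\rfloor=2=\lceil y/2\rceil$ --- but, as you yourself note, that remark plays no role in the lemma or in how it is used.
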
\vspace{-0.2cm}
\begin{proof} Let $T=\left \lfloor y \right\rfloor$. If $T$ is an even number. Then, we have $\left\lceil{1\over 2}y \right\rceil= \left\lceil{1\over 2} (T+\epsilon) \right\rceil={T\over 2}+1$.
where $\epsilon \in (0,1) $. Since $T\geq{T\over 2}+1$, so $\lfloor y \rfloor\geq \left \lceil {1\over 2} y\right\rceil.$ If $T$ is an odd number, we have
$\left \lceil{1\over 2} y \right\rceil= \left\lceil{T-1\over 2} +{\epsilon+1\over 2} \right\rceil={T-1\over 2}+1={T+1\over 2}$.
Again since $T\geq{T+1\over 2}$, we have that $\lfloor y \rfloor\geq \left \lceil {1\over 2} y\right\rceil.$
\end{proof}

\begin{lemma}\label{fistalem1}
Given a symmetric positive definite matrix $Q$, then, we have the following for any $\nu_1,\nu_2,\nu_3$:
$(\nu_2-\nu_1)^TQ(\nu_3-\nu_1)={1\over 2}(\|\nu_2-\nu_1\|^2_Q+\|\nu_3-\nu_1\|^2_Q-\|\nu_2-\nu_3\|^2_Q), \mbox{ where } \|\nu\|_Q \triangleq \sqrt{\nu^TQ\nu}.$
\end{lemma}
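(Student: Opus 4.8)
The plan is to reduce the claimed identity to the standard expansion of a $Q$-weighted squared norm. First I would introduce the shorthand $a \triangleq \nu_2 - \nu_1$ and $b \triangleq \nu_3 - \nu_1$, so that $\nu_2 - \nu_3 = a - b$. With this notation, the left-hand side is $a^T Q b$ and the right-hand side is $\tfrac{1}{2}\big(\|a\|_Q^2 + \|b\|_Q^2 - \|a-b\|_Q^2\big)$; so it suffices to show these two expressions coincide.

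Next I would expand $\|a-b\|_Q^2 = (a-b)^T Q (a-b) = a^T Q a - a^T Q b - b^T Q a + b^T Q b$. The only structural property of $Q$ invoked here is symmetry: it gives $b^T Q a = (b^T Q a)^T = a^T Q^T b = a^T Q b$, and hence $\|a-b\|_Q^2 = \|a\|_Q^2 + \|b\|_Q^2 - 2\, a^T Q b$. Substituting this into $\tfrac{1}{2}\big(\|a\|_Q^2 + \|b\|_Q^2 - \|a-b\|_Q^2\big)$ collapses the norm terms and leaves exactly $a^T Q b$, which is the left-hand side. Undoing the substitution recovers the stated identity. I would add a one-line remark that positive definiteness of $Q$ is not needed for the identity itself; it is only used to guarantee that $\|\nu\|_Q \triangleq \sqrt{\nu^T Q \nu}$ is a genuine norm so that the notation in the statement is well defined.

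There is essentially no obstacle: this is a routine polarization / law-of-cosines identity for the bilinear form $\langle u, v\rangle_Q \triangleq u^T Q v$, and the entire argument is a single line of algebra after the substitution $a = \nu_2 - \nu_1$, $b = \nu_3 - \nu_1$. The only point requiring any care is the use of symmetry of $Q$ when merging the cross terms $a^T Q b$ and $b^T Q a$; this is the sole place the hypothesis on $Q$ is used (beyond well-definedness of $\|\cdot\|_Q$).
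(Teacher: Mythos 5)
Your proof is correct and is exactly the routine expansion the paper relies on (the paper states Lemma~\ref{fistalem1} without proof, treating it as the standard polarization identity): expanding $\|a-b\|_Q^2$ and using symmetry of $Q$ to merge the cross terms gives the claim immediately. Your remark that positive definiteness is only needed so that $\|\cdot\|_Q$ is a genuine norm, while symmetry alone drives the identity, is accurate and a nice touch.
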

\begin{lemma}\label{lemma_ac-vssa}
Suppose Assumptions \ref{fistaass2} and \ref{ass_error2}(i)   hold.  Furthermore, $\gamma_k = 1/(2L)$ for all $k$.
	 If $h(x_k) \triangleq {2L}(x_k-{y_{k+1}})$, 
$\nonumber\quad F(x)-{\mu \over 4}\|x-x_k\|^2  \geq F(y_{k+1})+{1\over 4L}\|h(x_k)\|^2+ h(x_k)^T(x-x_k)- {\left({2 \over L}+{1\over \mu}\right) \| \bar{w}_{k,N_k}\|^2}.$
\end{lemma}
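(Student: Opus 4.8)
The plan is to mimic the classical ``progress lemma'' for proximal gradient / FISTA-type updates, but with two modifications: (i) the gradient used in the update is the \emph{inexact} gradient $\nabla_x f(x_k)+\bar w_{k,N_k}$, so we must carry the noise term $\bar w_{k,N_k}$ explicitly and absorb it via Young's inequality; (ii) the step $\gamma_k=1/(2L)$ is half of the usual $1/L$, which will give us the slack needed to dominate the noise-related quadratic. First I would write the optimality condition for $y_{k+1}={\bf P}_{\gamma_k g}\bigl(x_k-\gamma_k(\nabla_x f(x_k)+\bar w_{k,N_k})\bigr)$, namely that there exists $s\in\partial g(y_{k+1})$ with $s=\tfrac1{\gamma_k}(x_k-y_{k+1})-\nabla_x f(x_k)-\bar w_{k,N_k}$, i.e.\ in terms of $h(x_k)=2L(x_k-y_{k+1})$ (recall $\gamma_k=1/(2L)$) we have $s=h(x_k)-\nabla_x f(x_k)-\bar w_{k,N_k}\in\partial g(y_{k+1})$.

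Next I would combine three ingredients evaluated at an arbitrary $x$. First, $\mu$-strong convexity of $f$: $f(x)\ge f(x_k)+\nabla_x f(x_k)^T(x-x_k)+\tfrac{\mu}{2}\|x-x_k\|^2$. Second, the descent/quadratic-upper-bound from $L$-smoothness of $f$ applied at $y_{k+1}$: $f(y_{k+1})\le f(x_k)+\nabla_x f(x_k)^T(y_{k+1}-x_k)+\tfrac{L}{2}\|y_{k+1}-x_k\|^2 = f(x_k)-\tfrac1{2L}\|h(x_k)\|^2+\tfrac{1}{8L}\|h(x_k)\|^2 = f(x_k)-\tfrac{3}{8L}\|h(x_k)\|^2$. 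Third, the subgradient inequality for $g$ at $y_{k+1}$ with the $s$ identified above: $g(x)\ge g(y_{k+1})+s^T(x-y_{k+1})=g(y_{k+1})+\bigl(h(x_k)-\nabla_x f(x_k)-\bar w_{k,N_k}\bigr)^T(x-y_{k+1})$. Adding the first and third inequalities and subtracting the second (after rearranging) yields, after the $\nabla_x f(x_k)^T(\cdot)$ terms telescope, a bound of the form $F(x)-\tfrac{\mu}{2}\|x-x_k\|^2\ge F(y_{k+1})+\tfrac{3}{8L}\|h(x_k)\|^2+h(x_k)^T(x-x_k)-\tfrac1{2L}\|h(x_k)\|^2-\bar w_{k,N_k}^T(x-y_{k+1})$; note $\tfrac{3}{8L}-(\text{leftover from the cross term with }h(x_k)^T(y_{k+1}-x_k))$ must be tracked carefully — writing $x-y_{k+1}=(x-x_k)+(x_k-y_{k+1})=(x-x_k)+\tfrac1{2L}h(x_k)$ reorganizes everything so the pure $h(x_k)^T(x-x_k)$ term appears cleanly and an extra $+\tfrac1{2L}\|h(x_k)\|^2$ or similar constant times $\|h(x_k)\|^2$ is produced.

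The last step is to handle the stray term $-\bar w_{k,N_k}^T(x-y_{k+1})$. I would split it using $x-y_{k+1}=(x-x_k)+\tfrac1{2L}h(x_k)$; the piece $-\bar w_{k,N_k}^T(x-x_k)$ I do \emph{not} bound here (it will be handled at a later stage via conditional expectation / telescoping, since $\bar w_{k,N_k}$ is conditionally mean-zero and $x$ will be taken to be $x^*$ or $y_k$, which are $\mathcal F_k$-measurable for the relevant choices) — wait, but the statement as written has no such term, so in fact the intended reading is that $x$ is a free variable and the cross term $-\bar w_{k,N_k}^T(x-x_k)$ is absorbed too; the cleanest route is Young's inequality $-\bar w_{k,N_k}^T(x_k-y_{k+1}) = -\tfrac1{2L}\bar w_{k,N_k}^Th(x_k)\le \tfrac1{8L}\|h(x_k)\|^2+\tfrac{2}{L}\|\bar w_{k,N_k}\|^2$ for the $h$-part, while the $(x-x_k)$-part is kept symbolic and matched against the strong-convexity slack, or (more likely, given the clean $-\tfrac2L\|\bar w_{k,N_k}\|^2$ in the claim) the whole $-\bar w_{k,N_k}^T(x-y_{k+1})$ is left to the calling lemma and only the $-\tfrac2L\|\bar w_{k,N_k}\|^2$ arising from completing the square on $h(x_k)$ vs.\ $\bar w_{k,N_k}$ remains. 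The remaining $\|h(x_k)\|^2$ coefficient then needs to come out to exactly $\tfrac1{4L}$ after subtracting the $\tfrac1{8L}$ (or $\tfrac18$ etc.) spent in the Young step from the $\tfrac38$ (or whatever) accumulated above — this bookkeeping of the $\|h(x_k)\|^2$ coefficients is the main obstacle, and it is precisely where the choice $\gamma_k=1/(2L)$ rather than $1/L$ is used: with $\gamma_k=1/L$ there is no slack to pay for the Young's-inequality step against the noise, whereas with $\gamma_k=1/(2L)$ the quadratic-upper-bound term becomes $\tfrac18\cdot\tfrac{\|h\|^2}{L}$ instead of $\tfrac12\cdot\tfrac{\|h\|^2}{L}$, leaving room. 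I would verify the constants by tracking them symbolically as $c_1\|h(x_k)\|^2/L$ through each line and solving $c_1=\tfrac14$ at the end; everything else is routine inner-product algebra.
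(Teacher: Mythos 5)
Your route is essentially the paper's: use the prox optimality condition for $y_{k+1}$ (which gives $h(x_k)-\nabla_x f(x_k)-\bar w_{k,N_k}\in\partial g(y_{k+1})$), $\mu$-strong convexity of $f$ at $x_k$, and the $L$-descent inequality at $y_{k+1}$, then split $x-y_{k+1}=(x-x_k)+\tfrac{1}{2L}h(x_k)$ and pay for the noise--$h$ cross term with the slack created by $\gamma_k=1/(2L)$; the paper packages the same three ingredients through the model $\psi_k(x)=f(x_k)+\nabla_x f(x_k)^T(x-x_k)+L\|x-x_k\|^2+\bar w_{k,N_k}^T(x-x_k)$ and the elementary bound used in \eqref{lemma-b2}. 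Two points in your bookkeeping need repair. First, the equality $\nabla_x f(x_k)^T(y_{k+1}-x_k)=-\tfrac{1}{2L}\|h(x_k)\|^2$ is false: $h(x_k)=2L(x_k-y_{k+1})$ is the (noisy, proximal) gradient mapping, which differs from $\nabla_x f(x_k)$ whenever $g$ or $\bar w_{k,N_k}$ is nonzero, so your intermediate coefficient on $\|h(x_k)\|^2$ comes out wrong. The cure is the one you yourself point to: keep $\nabla_x f(x_k)^T(y_{k+1}-x_k)$ symbolic; it cancels when the three inequalities are added, and the correct tally is $F(x)-\tfrac{\mu}{2}\|x-x_k\|^2\ge F(y_{k+1})+h(x_k)^T(x-x_k)+\tfrac{3}{8L}\|h(x_k)\|^2-\bar w_{k,N_k}^T(x-y_{k+1})$; Young's inequality applied to $-\tfrac{1}{2L}\bar w_{k,N_k}^Th(x_k)$ then costs $\tfrac{1}{8L}\|h(x_k)\|^2+\tfrac{1}{2L}\|\bar w_{k,N_k}\|^2$ and lands exactly on $\tfrac{1}{4L}\|h(x_k)\|^2$ with a noise penalty $\tfrac{1}{2L}\|\bar w_{k,N_k}\|^2\le \tfrac{2}{L}\|\bar w_{k,N_k}\|^2$, so the claimed constants are reachable (indeed with room to spare).

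Second, your hesitation about the residual term $-\bar w_{k,N_k}^T(x-x_k)$ is warranted, and it is the one place the sketch does not close: that term cannot be absorbed pathwise, since all of the $\tfrac{\mu}{2}\|x-x_k\|^2$ slack has already been moved to the left-hand side. The paper's own proof has the same feature: its intermediate bound \eqref{lemma-b1} retains $\bar w_{k,N_k}^T(x_k-x)$, which is then dropped in the lemma's statement; the term is conditionally mean-zero and only disappears downstream, when the lemma is invoked at $\mathcal F_k$-measurable points ($x=y_k$ or $x=x^*$) and expectations are taken in Lemma~\ref{prop-err-bd}. A complete write-up along your lines should therefore either carry $+\bar w_{k,N_k}^T(x_k-x)$ explicitly in the conclusion (discharging it later exactly as you suggest) or state the inequality in conditional expectation; with that caveat, your argument coincides with the paper's and the constant-tracking works out.
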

\begin{proof} 
Since $y_{k+1} \triangleq {\mbox{arg}}\min_{x}\ \ {1\over 2L}g(x)+{1 \over 2}\left\|x-\left[x_k-{1\over
	2L}(\nabla_x f(x_k)+\bar{w}_{k,N_k})\right]\right\|^2$, we have that 
\begin{align*}
y_{k+1} &=  \mbox{arg}\min_{{x }}\ \ {1\over 2L}g(x)+{1 \over 2}\Big[\|x-x_k\|^2
+ {1 \over L} (x-x_k)^T(\nabla_x f(x_k)+\bar{w}_{k,N_k})+ {1\over
			4L^2} \|\nabla_x f(x_k) + \bar{w}_{k,N_k}\|^2 \Big] \\
		& = \mbox{arg}\min_{{x }}\ \ g(x)+\Big[{L}\|x-x_k\|^2+ f(x_k)
		+ (x-x_k)^T(\nabla_x f(x_k)+\bar{w}_{k,N_k})\Big].
\end{align*}
Let ${\psi_k}(x)\triangleq f(x_k)+\nabla_x f(x_k)^T(x-x_k) +{L}\|x-x_k\|^2+\bar{w}_{k,N_k}^T(x-x_k)$, implying that 
\begin{align}\label{problem y_{k+1}} y_{k+1}  =\mbox{arg}\min_{x } \ \ {\psi_k}(x)+g(x).\end{align}
 Then $\nabla_x {\psi_k}(x)$
	may be expressed as $\nabla_x {\psi_k}(x)=\nabla_x
	f(x_k)+{2L}(x-x_k)+\bar{w}_{k,N_k}$
By the optimality condition of \eqref{problem y_{k+1}}, we have $0\in \partial g(y_{k+1})+\nabla {\psi_k}(y_{k+1})$. Hence, by convexity of function $g(x)$ we obtain
\begin{align}\label{opt cond}
 g(x)\geq g(y_{k+1})-\nabla{\psi_k}(y_{k+1})^T(x-y_{k+1}) \implies \nabla{\psi_k}(y_{k+1})^T(x-y_{k+1})\geq g(y_{k+1})-g(x).
\end{align}
Consequently, by using the definition of ${\psi_k}(x)$ and $h(x)$ we have that
\begin{align} \label{eq-acc1}
\nabla_x f(x_k)^T(x-y_{k+1}) \geq g(y_{k+1})-g(x)+\left(h(x_k)-\bar
	w_{k,N_k}\right)^T(x-y_{k+1}), \quad \ \forall x. \end{align}
{Since $f$ is a $\mu$-strongly convex function},
\begin{align*}
 & \quad f(x)-{\mu \over 2}\|x-x_k\|^2   \geq f(x_k)+\nabla_x
f(x_k)^T(x-x_k)  =f(x_k)+\nabla_x f(x_k)^T(x-x_k+y_{k+1}-y_{k+1})\\
\quad & \overset{\mbox{\tiny (From \eqref{eq-acc1})}}{\geq} f(x_k)+\nabla_x f(x_k)^T(y_{k+1}-x_k)+ (h(x_k)-\bar{w}_{k,N_k})^T(x-y_{k+1})+g(y_{k+1})-g(x)  \\
&= {\psi_k}(y_{k+1})-{L}\|y_{k+1}-x_k\|^2-\bar{w}_{k,N_k}^T(y_{k+1}-x_k) 
+ (h(x_k)-\bar{w}_{k,N_k})^T(x-y_{k+1})+g(y_{k+1})-g(x) \\
&= {\psi_k}(y_{k+1})-{L}\|y_{k+1}-x_k\|^2+\bar{w}_{k,N_k}^T(x_k-x) 
+ h(x_k)^T(x-y_{k+1})+g(y_{k+1})-g(x). 
\end{align*}
From the definition of $h(x_k)$, ${L}\|y_{k+1}-x_k\|^2={1\over {4 L}}\|h(x_k)\|^2$ and inequality \eqref{opt cond}, we have the following: 
\begin{align}
\nonumber & \quad F(x)-{\mu \over 2}\|x-x_k\|^2  \nonumber  \geq {\psi_k}(y_{k+1})-{1\over 4L}\|h(x_k)\|^2+h(x_k)^T(x-y_{k+1})+\bar{w}_{k,N_k}^T(x_k-x) +g(y_{k+1})\\ \nonumber
&= {\psi_k}(y_{k+1})-{1\over 4L}\|h(x_k)\|^2+h(x_k)^T(x-y_{k+1}+x_k-x_k)+\bar{w}_{k,N_k}^T(x_k-x)+g(y_{k+1}) \\ 
&= {\psi_k}(y_{k+1})+{1 \over 4L}\|h(x_k)\|^2+h(x_k)^T(x-x_k)+
\bar{w}_{k,N_k}^T(x_k-x)+g(y_{k+1}),\label{lemma-b1-1} \\
&{\geq {\psi_k}(y_{k+1})+{1 \over 4L}\|h(x_k)\|^2+h(x_k)^T(x-x_k)-
{1\over \mu}\|\bar{w}_{k,N_k}\|-{\mu\over 4}\|x_k-x\|^2+g(y_{k+1})}\label{lemma-b1}
\end{align}
where \eqref{lemma-b1-1} follows from the definition of $h(x_k)$ {and \eqref{lemma-b1} follows by using the fact that $a^Tb\geq -{1\over 2\alpha}\|a\|^2-{\alpha\over 2}\|b\|^2$ {with $\alpha = 2$}.}
From $L$-smoothness of $f$,
{\begin{align} 
\notag {\psi_k}(y_{k+1}) &\nonumber = f(x_k) + \nabla_x f(x_k)^T(y_{k+1}-x_k) + {L} \|x_k-y_{k+1}\|^2 + \bar{w}_{k,N_k}^T(y_{k+1}-x_k) \\ 
& \geq f(y_{k+1}) + \bar{w}_{k,N_k}^T(y_{k+1}-x_k){+}{L \over 2} \|x_k - y_{k+1}\|^2  \geq f(y_{k+1}) - {{2 \over L} \| \bar{w}_{k,N_k}\|^2}, 
\label{lemma-b2}
\end{align}}
{where \eqref{lemma-b2} follows from $2a^Tb + \|a\|^2 \geq - \|b\|^2.$}
By substituting \eqref{lemma-b2} in \eqref{lemma-b1}, the
result follows.
\end{proof} 
It is worth emphasizing that in the proof of Lemma~\ref{lemma_ac-vssa}, we employ a simple bound to ensure that the term $\bar{w}_{k,N_k}^T(y_{k+1}-x_k)$ does not appear in the final bound. Instead, the term $\|\bar{w}_{k,N_k}\|^2$ emerges and this allows for deriving the optimal (rather than sub-optimal) oracle complexity.   {Next, we define a set of parameter sequences that form the basis for updating the iterates.}

\begin{definition}[{\bf Defn. of $v_k, \alpha_k, \tau_k$}] \label{def-ac-param}
Given $v_0$, $\tau_0$, sequences
	$\{v_k,\tau_k, \alpha_k\}$ are defined as follows:
\begin{align}\label{v_k update}
v_{k+1} & :={1\over \tau_{k+1}}\left[ (1-\alpha_k)\tau_k v_k+{1\over 2}\alpha_k \mu x_k 
	 -\alpha_k(h(x_k) \right], \\
\label{upd-alpha} \alpha_k & \ \mbox{\rm solves }  \ (1-\alpha_k) \tau_k +{1\over 2}\alpha_k \mu = {2}\alpha_k^2 L, \\
\label{upd-tau}
\tau_{k+1} & : =(1-\alpha_k)\tau_k+{1\over 2}\alpha_k \mu. 
\end{align}
\end{definition}
{We employ this set of parameters in showing that the update rule (3) in Algorithm~\ref{nonsmooth sc scheme}  can be recast using the parameters $\tau_k, \alpha_k$, and $v_k$. This observation is crucial as we analyze the update.} 
\begin{lemma}[{\bf Equivalence of Update rules}]\label{lemma_b3}
Suppose Assumptions~\ref{fistaass2} and \ref{ass_error2}(i)   hold.  Suppose the sequences $\{v_k\}, \{\alpha_k\}$, and $\{\tau_k\}$ are
prescribed by Definition~\ref{def-ac-param}. Consider the sequence
$\{x_k\}$ generated by the algorithm. Then the following hold:
\begin{align*}
\text{{\it (i)}}   \left[x_{k+1}:=y_{k+1}+\tfrac{\alpha_{K+1}\tau_{k+1}(1-\alpha_k)}{\tau_{k+2}+ \alpha_{k+1}\tau_{k+1}}(y_{k+1}-y_{k}) \right] \equiv 
\left[x_{k+1}:=\tfrac{1}{\tau_{k+1}+ {1\over 2}\alpha_{k+1}\mu}(\alpha_{k+1}\tau_{k+1}v_{k+1}+\tau_{k+2}y_{k+1})\right].
\end{align*}
\noindent (ii)  Suppose $\alpha_k={1\over \lambda_k}$ for all $k$. Then the
update rule (1b) in Algorithm~\ref{nonsmooth sc scheme} with 
$ \sigma_k \triangleq \frac{(\lambda_k-1)\left(1-\frac{\lambda_{k+1}}{{4}\kappa}\right)}{\left(1-\frac{{1}}{{4}\kappa}\right)\lambda_{k+1}}$ for all
$k$ is equivalent to the following:
\begin{align*}
&\left[x_{k+1}:=y_{k+1}+\sigma_k(y_{k+1}-y_k)\right] \equiv \left[
x_{k+1}:={1\over
\tau_{k+1}+{1\over 2}\alpha_{k+1}\mu}(\alpha_{k+1}\tau_{k+1}v_{k+1}+{\tau}_{k+2}y_{k}) \right].
\end{align*}
\end{lemma}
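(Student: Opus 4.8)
The plan is to prove both equivalences by direct algebraic substitution, treating them as identities that hold once all the recursions of Definition~\ref{def-ac-param} and the defining relations of the algorithm are imposed; an induction on $k$ is used so that the ``compact'' $v$-based form may be assumed at the previous index. The three ingredients are: (a) the recursion \eqref{v_k update} for $v_{k+1}$ together with \eqref{upd-alpha}--\eqref{upd-tau}, from which one extracts the key scalar identity $\tau_{k+1}=(1-\alpha_k)\tau_k+2\alpha_k\mu=2\alpha_k^2L$; (b) the gradient-mapping identity $h(x_k)=2L(x_k-y_{k+1})$ from Lemma~\ref{lemma_ac-vssa} (equivalently, the prox/gradient step defining $y_{k+1}$ with step $\gamma_k=1/(2L)$), which lets us write $\alpha_k h(x_k)=\tfrac{\tau_{k+1}}{\alpha_k}(x_k-y_{k+1})$; and (c) the inductive hypothesis, i.e.\ the compact form at index $k$, which gives $\alpha_k\tau_kv_k$ as an explicit affine function of $x_k$ and $y_k$.

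For part (i), I would start from the compact right-hand side $x_{k+1}=\big(\alpha_{k+1}\tau_{k+1}v_{k+1}+\tau_{k+2}y_{k+1}\big)/\big(\tau_{k+1}+2\alpha_{k+1}\mu\big)$, substitute \eqref{v_k update} for $\tau_{k+1}v_{k+1}$, eliminate $\alpha_k h(x_k)$ via (b) and $\alpha_k\tau_kv_k$ via (c), and then collect terms. The decisive point is that the coefficient of $x_k$ (hence of $v_k$) in $\tau_{k+1}v_{k+1}$ vanishes; this is exactly where the two ``$2$''-factors must line up --- the $2L$ coming from the step $\gamma_k=1/(2L)$ and the factor multiplying $\mu$ in \eqref{v_k update}/\eqref{upd-tau} --- so that $\tau_{k+1}v_{k+1}$ becomes a pure affine combination of $y_{k+1}$ and $y_k$, of the form $\tfrac{\tau_{k+1}}{\alpha_k}\big(y_{k+1}-(1-\alpha_k)y_k\big)$. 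Plugging this back in and simplifying the resulting coefficient of $y_{k+1}-y_k$ with repeated use of $\tau_{k+2}=(1-\alpha_{k+1})\tau_{k+1}+2\alpha_{k+1}\mu$ yields the momentum form with the stated coefficient; the base case is handled by $v_1=x_1=y_1$.

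For part (ii), I would first set up the dictionary between the $\{\alpha_k,\tau_k\}$ sequences and the $\{\lambda_k\}$ sequence: putting $\alpha_k=1/\lambda_k$ and $\kappa=L/\mu$ in \eqref{upd-alpha}--\eqref{upd-tau} forces $\tau_{k+1}=2\mu\kappa/\lambda_k^2$, and the compatibility of this choice with the recursions is precisely the quadratic $\lambda_{k+1}(\lambda_{k+1}-1)=\lambda_k^2(1-\lambda_{k+1}/\kappa)$ solved by step (2) of Algorithm~\ref{nonsmooth sc scheme}. Substituting $\alpha_k=1/\lambda_k$ and $\tau_k=2\mu\kappa/\lambda_{k-1}^2$ into the momentum coefficient obtained in part (i), and using $1-\lambda_{k+1}/\kappa=\lambda_{k+1}(\lambda_{k+1}-1)/\lambda_k^2$ together with $1+(1-2/\kappa)\lambda_{k+1}=(\lambda_{k+1}-1)(\lambda_k^2+2\lambda_{k+1})/\lambda_k^2$, the common factors cancel and the coefficient collapses to $\sigma_k=\frac{(\lambda_k-1)(1-\lambda_{k+1}/\kappa)}{1+(1-2/\kappa)\lambda_{k+1}}$ of step (3); re-running the substitution of part (i) with these values then produces the compact form asserted in (ii).

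The main obstacle is the bookkeeping in part (i): verifying that after inserting \eqref{v_k update}, the gradient-mapping identity, and the inductive hypothesis, every term involving $x_k$ (and hence $v_k$) cancels, leaving only $y_{k+1}$ and $y_k$. This cancellation is not automatic; it hinges on the precise constants in Definition~\ref{def-ac-param} (the reason the scheme insists on $\gamma_k=1/(2L)$ rather than $1/L$), and getting those constants consistent is the crux. Once part (i) is established, part (ii) is a routine change of variables driven by the $\lambda_{k+1}$-quadratic.
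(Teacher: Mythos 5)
Your proposal is correct and follows essentially the same route as the paper: the paper likewise uses the compact ($v$-based) form at index $k$ to eliminate $v_k$, substitutes into the recursion \eqref{v_k update} together with $\tau_{k+1}=2L\alpha_k^2$ and $h(x_k)=2L(x_k-y_{k+1})$ so that the $x_k$-terms cancel and $v_{k+1}=y_k+\tfrac{1}{\alpha_k}(y_{k+1}-y_k)$, and then recovers the momentum form. Your part (ii) is also the paper's argument, resting on the same substitution $\alpha_k=1/\lambda_k$, $\tau_{k+1}=2L\alpha_k^2$ and the quadratic $\lambda_k^2=\tfrac{\lambda_{k+1}(\lambda_{k+1}-1)}{1-\lambda_{k+1}/\kappa}$ from step (2).
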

\begin{proof} 
{\bf (i).} The update rule on
the right in (i) can be recast as follows: 
\begin{align}\label{v_k}
x_k  &={1\over \tau_k+\alpha_k \mu}(\alpha_k \tau_k v_k+\tau_{k+1}y_k)  
 {\iff} v_k  = \frac{(\tau_k+{1\over 2}\alpha_k\mu)x_k-\tau_{k+1}y_k}{\alpha_k \tau_k}.
\end{align}
Now by substituting the expression for $v_k$ from \eqref{v_k} in \eqref{v_k update} and {recalling} that $\tau_{k+1}=(1-\alpha_k)\tau_k+{1\over 2}\alpha_k \mu={2}L \alpha_k^2$ and ${h(x_k) = 2L(x_k - y_{k+1})}$, we obtain {the following sequence of equalities.}
\begin{align}
v_{k+1} &  \notag= {1\over \tau_{k+1}} \Big[ (1-\alpha_k)\tau_k v_k+{1\over 2}\alpha_k
\mu x_k-\alpha_k(h(x_k) \Big] \\
		& =   \notag{1\over \tau_{k+1}} \Big[ (1-\alpha_k)\tau_k \frac{(\tau_k+{1\over 2}\alpha_k\mu)x_k-\tau_{k+1}y_k}{\alpha_k \tau_k}+{1\over 2}\alpha_k
\mu x_k-\alpha_k(h(x_k) \Big] \\
&= \notag\frac{(1-\alpha_k)\tau_k+{1\over 2}\alpha_k \mu-{1\over 2}\alpha_k^2\mu}{\tau_{k+1}
	\alpha_k}x_k-\frac{1-\alpha_k}{\alpha_k}y_k+{\alpha_k \mu \over{2}
		\tau_{k+1}}x_k-{\alpha_k\over \tau_{k+1}}(h(x_k))\\
& = \notag \frac{\tau_{k+1}-{1\over 2}\alpha_k^2\mu}{\tau_{k+1}\alpha_k}x_k-\frac{1-\alpha_k}{\alpha_k}y_k+{\alpha_k
	\mu \over {2}\tau_{k+1}}x_k-{\alpha_k\over \tau_{k+1}}h(x_k)\\ 
& = y_k+{1\over \alpha_k}(x_k-y_k)-{\alpha_k\over {2}L\alpha_k^2}({2}L(x_k-y_{k+1}))
= y_k+{1\over \alpha_k}(y_{k+1}-y_k). \label{upd-vk}
\end{align}
We now show that the update rule for $x_{k+1}$  on the left is
equivalent to that on the right in (i).
\begin{align*}
& x_{k+1}={1\over
\tau_{k+1}+{1\over 2}\alpha_{k+1}\mu}(\alpha_{k+1}\tau_{k+1}v_{k+1}+\tau_{k+2}y_{k+1})\\
&\overset{\eqref{upd-vk}}{=} {1\over
\tau_{k+1}+{1\over 2}\alpha_{k+1}\mu}(\alpha_{k+1}\tau_{k+1}y_k+{\alpha_{k+1}\tau_{k+1}\over
	\alpha_k}(y_{k+1}-y_k)+\tau_{k+2}y_{k+1})\\& =
\left(\frac{\tau_{k+2}+\alpha_{k+1}\tau_{k+1}}{\tau_{k+1}+{1\over 2}\alpha_{k+1}\mu)}\right)y_{k+1}+\left({1\over
\alpha_k}-1\right)\left({ {\alpha_{k+1}}\tau_{k+1}\over
	\tau_{k+1}+{1\over 2}\alpha_{k+1}\mu}\right)(y_{k+1}-y_k)\\ 
& =y_{k+1}+\left({1\over
\alpha_k}-1\right)\Big({ {\alpha_{k+1}}\tau_{k+1}\over
	\tau_{k+1}+{1\over 2}\alpha_{k+1}\mu}\Big)(y_{k+1}-y_k)  \\&= {
y_{k+1}+\frac{\alpha_{k+1}\tau_{k+1}(1-\alpha_k)}{\alpha_k(\tau_{k+1}+{1\over 2}\alpha_{k+1}\mu)}(y_{k+1}-y_k)}= {y_{k+1}+\frac{\alpha_{k+1}\tau_{k+1}(1-\alpha_k)}{\alpha_k(\tau_{k+2}+\alpha_{k+1}\tau_{k+1})}(y_{k+1}-y_k), }
\end{align*}
since $\tau_{k+1}=(1-\alpha_k)\tau_k+{1\over 2}\alpha_k \mu$.
	
	\noindent {\bf (ii).} By choosing
		$\tau_{k+1} = {2}\alpha_k^2L $ for $k \geq 0$, satisfying
			\eqref{upd-alpha} and \eqref{upd-tau}, 
\begin{align}\label{rule by alpha}
x_{k+1}&\nonumber=y_{k+1}+\frac{\alpha_{k+1}\tau_{k+1}(1-\alpha_k)}{\alpha_k({\tau}_{k+2}+\alpha_{k+1}\tau_{k+1})}(y_{k+1}-y_k)
 {=y_{k+1}+ \frac{\alpha_{k+1}\alpha_k(1-\alpha_k)}{\alpha_{k+1}^2+\alpha_{k+1}\alpha_k^2}(y_{k+1}-y_k)}\\&
 {=y_{k+1}+ \frac{\alpha_k(1-\alpha_k)}{\alpha_{k+1}+\alpha_k^2}(y_{k+1}-y_k) }.
\end{align}
Now by choosing $\alpha_k={1\over \lambda_k}$, we have the following:
\begin{align}\label{rule by alpha 2}
\frac{\alpha_k(1-\alpha_k)}{\alpha_k^2+\alpha_{k+1}}=\frac{{1\over \lambda_k}(1-{1\over \lambda_k})}{\left({1\over \lambda_k}\right)^2+{1\over \lambda_{k+1}}}=\frac{\lambda_{k+1}(\lambda_k-1)}{\lambda_{k+1}+\lambda_k^2}.
\end{align}
From the update rule for $\lambda_k$, we can obtain:
\begin{align}\label{rule by alpha 3}
&\lambda_{k+1}=\frac{1- \frac{\lambda_k^2}{{4}\kappa}+\sqrt{\left(1-\frac{\lambda_k^2}{{4}\kappa}\right)^2+4\lambda_k^2}}{2}\implies
\lambda_k^2=\frac{\lambda_{k+1}(\lambda_{k+1}-1)}{1-\frac{\lambda_{k+1}}{{4}\kappa}}.
\end{align} 
By substituting \eqref{rule by alpha 3} in \eqref{rule by alpha 2} we obtain
$\frac{\alpha_k(1-\alpha_k)}{\alpha_k^2+\alpha_{k+1}}=\frac{(\lambda_k-1)(1-\frac{\lambda_{k+1}}{{4}\kappa})}{\left(1-\frac{{1}}{{4}\kappa}\right)\lambda_{k+1}}.$
Hence \eqref{rule by alpha} can be written as 
\begin{align*}
& x_{k+1}=y_{k+1}+\sigma_k(y_{k+1}-y_k), \quad \sigma_k=\frac{(\lambda_k-1)\left(1-\frac{\lambda_{k+1}}{{4}\kappa}\right)}{\left(1-\frac{{1}}{{4}\kappa}\right)\lambda_{k+1}}.
\end{align*}\end{proof} 

{We now utilize the previous Lemma in defining an auxiliary function sequence $\{\phi_{k+1}(x)\}$ and a sequence $\{p_k\}$. These sequences form the basis for carrying out the final rate analysis. } 

\begin{lemma}\label{phi}
Suppose Assumptions \ref{fistaass2} and \ref{ass_error2}(i)   hold. Consider the iterates
	generated by Algorithm~\ref{nonsmooth sc scheme} where
	 $\gamma_k =1/(2L)$ while $\{v_k\}, \{\tau_k\}$, and
	$\{\alpha_k\}$ are defined in \eqref{v_k update}--\eqref{upd-tau}. Suppose $\phi_1(x) \triangleq F(\redd{x_0})+{\tau_1\over
			2}\|x-\redd{x_0}\|^2$ and $p_1 = 0$. If $\phi_k(x)$ and $p_k$ are
			defined as follows for $k \geq 1$:
\begin{align} 
 \phi_{k+1}(x) & := 
	 (1-\alpha_k)\phi_k(x)
		 +\alpha_k\Big[
	F(y_{k+1})+{1\over 4 L}\|h(x_k)\|^2+{\mu \over
		{4}}\|x-x_k\|^2+h(x_k)^T(x-x_k) \Big] 
\label{strong_1}
\\
	p_{k+1} & := (1-\alpha_k){\left({2\over L}+{1\over \mu}\right)\|\bar{w}_{k,N_k}\|^2}
		 +(1-\alpha_k)p_k,
 \label{def-pk}
\end{align}
where $h(x_k)= {2L}(x_k-{y_{k+1}})$.
If $\phi_k^* \triangleq \min_{x} \phi_k(x)$, 
then 
$ \phi_k^* \geq F(y_{k}) - p_k, \qquad \mbox{ for  all } k \geq 1. $
\end{lemma}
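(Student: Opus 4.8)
\textbf{Overall approach.} The statement $\phi_k^* \geq F(y_k) - p_k$ is exactly the kind of "estimating sequence" inequality used in Nesterov-type acceleration proofs, so I would prove it by induction on $k$. The plan is to carry along a second, auxiliary inductive hypothesis describing the \emph{structure} of $\phi_k$ — namely that $\phi_k(x) = \phi_k^* + \tfrac{\tau_k}{2}\|x - v_k\|^2$ for a quadratic with curvature $\tau_k$ centered at $v_k$. This structural claim is what makes the recursion \eqref{strong_1} tractable, because adding $(1-\alpha_k)$ times a quadratic centered at $v_k$ to $\alpha_k$ times the affine-plus-quadratic lower bound from Lemma~\ref{lemma_ac-vssa} again produces a quadratic, whose new curvature is $(1-\alpha_k)\tau_k + \alpha_k\mu$ and whose new center is precisely the $v_{k+1}$ defined in \eqref{v_k update}. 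Wait — I should double-check the curvature bookkeeping: the coefficient multiplying $\mu$ in \eqref{upd-tau} and \eqref{upd-alpha} carries a factor of $2$, reflecting the $\gamma_k = 1/(2L)$ normalization, so the quadratic term picked up from $\alpha_k \tfrac{\mu}{2}\|x - x_k\|^2$ must be tracked carefully against the definition of $\tau_{k+1}$; this is bookkeeping, not a real obstacle.

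\textbf{Key steps in order.} First I would establish the base case: $\phi_1(x) = F(x_1) + \tfrac{\tau_1}{2}\|x - x_1\|^2$ has $\phi_1^* = F(x_1) = F(y_1)$ (since $y_1 = x_1$) and $p_1 = 0$, so the inequality holds with equality. Second, assume $\phi_k^* \geq F(y_k) - p_k$ and that $\phi_k(x) = \phi_k^* + \tfrac{\tau_k}{2}\|x - v_k\|^2$. Third, expand $\phi_{k+1}$ from \eqref{strong_1}: minimize over $x$ by setting the gradient to zero, which yields the center $v_{k+1}$ from \eqref{v_k update} and a value $\phi_{k+1}^* = (1-\alpha_k)\phi_k^* + \alpha_k\big[F(y_{k+1}) + \tfrac{1}{4L}\|h(x_k)\|^2\big] + (\text{cross terms})$, where the cross terms combine the $h(x_k)^T(x-x_k)$ and curvature contributions; using $\tau_{k+1} = 2L\alpha_k^2$ (from \eqref{upd-alpha}) and $h(x_k) = 2L(x_k - y_{k+1})$, these cross terms should simplify to reproduce exactly the quantity needed. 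Fourth, apply the inductive hypothesis $\phi_k^* \geq F(y_k) - p_k$ together with the descent-type lower bound $F(y_k) \geq F(y_{k+1}) + (\text{gradient mapping terms})$ — which is Lemma~\ref{lemma_ac-vssa} evaluated at $x = y_k$ — and the definition \eqref{def-pk} of $p_{k+1}$ to conclude $\phi_{k+1}^* \geq F(y_{k+1}) - p_{k+1}$.

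\textbf{Main obstacle.} The delicate point is step three: showing that after minimizing $\phi_{k+1}(x)$ the leftover terms — specifically $\alpha_k \tfrac{1}{4L}\|h(x_k)\|^2$ together with the negative contribution $-\tfrac{\tau_{k+1}}{2}\|v_{k+1} - v_k\|^2$-type terms coming from completing the square — balance in a way that, when combined with $(1-\alpha_k)\big(F(y_k) - p_k\big)$ and Lemma~\ref{lemma_ac-vssa}, yields $F(y_{k+1})$ with only the controlled error $p_{k+1}$ left over. This is where the precise algebraic identities relating $\alpha_k$, $\tau_k$, $\tau_{k+1}$, and $h(x_k)$ (Definition~\ref{def-ac-param} and Lemma~\ref{lemma_b3}(i), which rewrites $x_{k+1}$ in terms of $v_{k+1}$) all have to be invoked in concert; I expect the bulk of the work, and the main risk of a sign or factor error, to live in this completing-the-square computation. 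The noise term enters cleanly: the $-\tfrac{2}{L}\|\bar w_{k,N_k}\|^2$ appearing in Lemma~\ref{lemma_ac-vssa} is, up to the factor $\alpha_k$ and the $(1-\alpha_k)$-discounting, exactly absorbed into the recursion \eqref{def-pk} for $p_{k+1}$, so no additional estimate on the noise is needed at this stage.
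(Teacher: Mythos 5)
Your plan follows essentially the same route as the paper's proof: induction on $k$ carrying the canonical quadratic form $\phi_k(x)=\phi_k^*+\tfrac{\tau_k}{2}\|x-v_k\|^2$, deriving $v_{k+1}$ and $\phi_{k+1}^*$ by completing the square, then invoking Lemma~\ref{lemma_ac-vssa} at $x=y_k$ together with the identities $\tau_{k+1}=2L\alpha_k^2$ and the Lemma~\ref{lemma_b3} representation of $x_k$ so the cross terms vanish and the noise term $\tfrac{2}{L}\|\bar w_{k,N_k}\|^2$ is absorbed into $p_{k+1}$. You have correctly identified all the ingredients (including the factor-of-$2$ bookkeeping in $\tau_{k+1}$ and $\alpha_k$), so carrying out the algebra as outlined reproduces the paper's argument.
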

\begin{proof} 
We begin by showing that $\nabla^2\phi_k(x)=\tau_k I$,
		where $I$ denotes the identity matrix. For $k=1$, $\nabla^2\phi_1(x)=\tau_1 I$. Suppose, this holds for $k$ and we proceed to show that this holds for $k:=k+1:$
\begin{align}
\nabla^2\phi_{k+1}(x)&=(1-\alpha_k)\nabla^2\phi_k(x)+{1\over 2}\alpha_k \mu I
= (1-\alpha_k) \tau_k I + {1\over 2}\alpha_k \mu I.
\end{align}\label{gamma_k+1}
By choosing $\tau_{k+1}  =(1-\alpha_k)\tau_k+{1\over 2}\alpha_k\mu$, the required claim follows.  Next we show that the sequence {$\phi_k(x)$} can be written as follows:
\begin{align}\label{strong_2}
\phi_k(x)=\phi^*_k+{\tau_k\over 2}\|x-v_k\|^2,
\end{align}
where $\phi^*_k=\min_x \phi_k(x)$ and $v_k=\arg \min_x \phi_k(x)$. 
Since $\phi_{k+1}(x)$ is a convex quadratic function by definition, we may represent it as 
$ \phi_{k+1}(x) = a + b^Tx + {1\over 2} x^TQ x. $
First, we note that
$ \nabla^2 \phi_{k+1}(x) = Q = \tau_{k+1} I. $
By noting that $\nabla_x \phi_{k+1}(v_{k+1}) = 0$, implying that 
$ b + \tau_{k+1} v_{k+1} = 0 \implies b = -\tau_{k+1} v_{k+1}.$
Consequently, we have that 
$\phi_{k+1}(v_{k+1}) = \phi_{k+1}^* = a -\tau_{k+1}v_{k+1}^T v_{k+1} + {1\over 2} \tau_{k+1} \|v_{k+1}\|^2 \implies a = \phi_{k+1}^* + {\tau_{k+1} \over 2} \|v_{k+1}\|^2.$
This implies that $\phi_{k+1}(x) = \phi_{k+1}^* + \frac{\tau_{k+1}}{2}
\|x-v_{k+1}\|^2$ and 
\eqref{strong_2} has been shown to be true for all $k$.  Next, we proceed to obtain the recursive rule for $v_{k+1}$ and $\phi^*_{k+1}.$ 
By using the optimality conditions for the unconstrained strongly convex
problem $\min_x \phi_k(x)$, we obtain the following:
\begin{align}\label{v_{k+1}}
 &\nonumber 0 = \nabla_x \phi_{k+1}(x)=(1-\alpha_k)\nabla_x \phi_k(x)+\alpha_k\left[ {1\over 2}\mu(x-x_k)+h(x_k)\right] \\& \nonumber
\overset{\eqref{strong_2}}{=}(1-\alpha_k) \tau_k(x-v_k)+\alpha_k\left[
{1\over 2}\mu(x-x_k)+h(x_k) \right]\\
 & \implies {\nabla_x \phi_{k+1}(x) = \tau_{k+1}(x-v_{k+1})} 
\  \mbox{implying}\  v_{k+1}  ={1\over \tau_{k+1}}\left[ (1-\alpha_k)\tau_k
 v_k+{1\over 2}\alpha_k \mu x_k-\alpha_kh(x_k) \right].
\end{align}
{By using equations \eqref{strong_1} and \eqref{strong_2}, we obtain the following:
\begin{align*}
\phi^*_{k+1}&=\phi_{k+1}(x_k)-{\tau_{k+1}\over
	2}\|x_k-v_{k+1}\|^2\\&=(1-\alpha_k)\Big[ \phi^*_k+{\tau_k \over 2}\|x_k-v_k\|^2 \Big]
+\alpha_k\Big[ {F}(y_{k+1})+{1\over 4L}\|h(x_k)\|^2\Big]
-{\tau_{k+1}\over 2}\|x_k-v_{k+1}\|^2 \\
	& =(1-\alpha_k)\Big[ \phi^*_k+{\tau_k \over 2}\|x_k-v_k\|^2 \Big]
+\alpha_k\Big[ {F}(y_{k+1})+{1\over 4L}\|h(x_k)\|^2 \Big]\\&-{\tau_{k+1}\over 2}\Big\|x_k-{1\over \tau_{k+1}}\Big[ (1-\alpha_k)\tau_k
 v_k+{1\over 2}\alpha_k \mu x_k-\alpha_kh(x_k) \Big]\Big\|^2 \\
 & = (1-\alpha_k)\ \phi^*_k+ \alpha_k {F}(y_{k+1}) +(1-\alpha_k){\tau_k \over 2}\|x_k-v_k\|^2 
+\alpha_k\Big[ {1\over 4L}\|h(x_k)\|^2\Big]\\&-{\tau_{k+1}\over 2}\Big\|x_k-{1\over \tau_{k+1}}\Big[  (1-\alpha_k)\tau_k
 (v_k-x_k+x_k) +{1\over 2} \alpha_k \mu x_k -\alpha_kh(x_k) \Big]\Big\|^2.
\end{align*}
The expression on the right can be further simplified as follows:
\begin{align*}
& \phi_{k+1}^*   = (1-\alpha_k)\ \phi^*_k+ \alpha_k {F}(y_{k+1}) +(1-\alpha_k){\tau_k \over 2}\|x_k-v_k\|^2 
+\alpha_k\Big[ {1\over 4L}\|h(x_k)\|^2\Big]\\
	&-{\tau_{k+1}\over 2}\Big\|{1\over \tau_{k+1}}\Big[ - (1-\alpha_k)\tau_k
 (v_k-x_k)  +\alpha_kh(x_k) \Big]\Big\|^2 \\
& = (1-\alpha_k)\ \phi^*_k+ \alpha_k {F}(y_{k+1}) +(1-\alpha_k){\tau_k \over 2}\|x_k-v_k\|^2 
+\alpha_k\Big[ {1\over 4L}\|h(x_k)\|^2\Big]
	-{(1-\alpha_k)^2 \tau_k^2\over 2\tau_{k+1}}\|v_k-x_k\|^2 \\&-
	\frac{\alpha_k^2 }{2\tau_{k+1}}\|h(x_k)\|^2 +
	\frac{(1-\alpha_k)\alpha_k\tau_k}{\tau_{k+1}} h(x_k)^T(v_k-x_k) \\
& = (1-\alpha_k)\ \phi^*_k+ \alpha_k {F}(y_{k+1}) +(1-\alpha_k){\tau_k \over 2}\|x_k-v_k\|^2 
+\left({\alpha_k \over {4L}} -
		\frac{\alpha_k^2}{2\tau_{k+1}}\right)\|h(x_k)\|^2 		\\&-{(1-\alpha_k)^2 \tau_k^2\over 2\tau_{k+1}}\|v_k-x_k\|^2
+ \frac{(1-\alpha_k)\alpha_k\tau_k}{\tau_{k+1}} h(x_k)^T(v_k-x_k)\\
&\implies \phi_{k+1}^* = (1-\alpha_k)\ \phi^*_k+ \alpha_k {F}(y_{k+1}) +(1-\alpha_k){\tau_k
	\over 2}\left(1-\frac{(1-\alpha_k)\tau_k)}{\tau_{k+1}}\right)\|x_k-v_k\|^2 \\
& +\left({\alpha_k \over 4L} -
		\frac{\alpha_k^2}{2\tau_{k+1}}\right)\|h(x_k)\|^2 + \frac{(1-\alpha_k)\alpha_k\tau_k}{\tau_{k+1}} h(x_k)^T(v_k-x_k)\\
 & = (1-\alpha_k)\ \phi^*_k+ \alpha_k {F}(y_{k+1})
+{(1-\alpha_k)\alpha_k \tau_k (\mu/{2})
	\over 2\tau_{k+1}}\|x_k-v_k\|^2 
+\left({\alpha_k \over 4L} -
		\frac{\alpha_k^2}{2\tau_{k+1}}\right)\|h(x_k)\|^2 \\ &	+ \frac{(1-\alpha_k)\alpha_k\tau_k}{\tau_{k+1}} h(x_k)^T(v_k-x_k) \\
& = (1-\alpha_k)\ \phi^*_k+ \alpha_k {F}(y_{k+1})
+\tfrac{(1-\alpha_k)\alpha_k }{ \tau_{k+1}} \tau_k \left({\mu 
	\over {4}}\|x_k-v_k\|^2 + h(x_k)^T(v_k-x_k)\right) 
+\left(\tfrac{\alpha_k}{4L} -
		\tfrac{\alpha_k^2}{2\tau_{k+1}}\right)\|h(x_k)\|^2.
\end{align*}}
Next, we inductively prove that $\phi^*_k\geq {F}(y_k)-p_k$ {where $p_k$ is defined in \eqref{def-pk}}. This
holds for $k=1$ where $p_1=0$. Assuming, it is true for $k$, we 
prove it holds for $k+1$ by invoking
Lemma \ref{lemma_ac-vssa} for $x=y_k$:
\begin{align*}
\phi^*_{k+1}&\geq (1-\alpha_k)({F}(y_k)-p_k)+\alpha_k {F}(y_{k+1})+\Big( {\alpha_k \over 4L}-{\alpha_k^2\over 2\tau_{k+1}} \Big)\|h(x_k)\|^2\\&+{\alpha_k (1-\alpha_k)\tau_k\over \tau_{k+1}}\Big( {\mu \over
		{4}}\|x_k-v_k\|^2+h(x_k)^T(v_k-x_k) \Big) \qquad \mbox{\scriptsize (Since $\phi_k^* \geq {F}(y_k)-p_k$)} \\\
		& \geq (1-\alpha_k)  ({F}(y_{k+1}) + h(x_k)^T(y_k - x_k) + {1\over
				4L} \|h(x_k)\|^2 + {\mu \over {4}} \|y_k - x_k\|^2 \\
		& - \left({2\over L}+{1\over \mu}\right)\|\bar{w}_{k,N_k}\|^2) - (1-\alpha_k) p_k  +\alpha_k {F}(y_{k+1})+\Big( {\alpha_k \over {4L}}-{\alpha_k^2\over 2\tau_{k+1}} \Big)\|h(x_k)\|^2+{\alpha_k (1-\alpha_k)\tau_k\over \tau_{k+1}}\\& {\times} \Big( {\mu \over
		{4}}\|x_k-v_k\|^2+h(x_k)^T(v_k-x_k) \Big)\\
& = {F}(y_{k+1}) +\Big( {1 \over 4L}-{\alpha_k^2\over
		2\tau_{k+1}} \Big)\|h(x_k)\|^2 + (1-\alpha_k)
h(x_k)^T\left(\frac{\alpha_k \tau_k}{\tau_{k+1}} (v_k-x_k) + (y_k
			- x_k)\right)\\& -(1-\alpha_k)p_k - { (1-\alpha_k)\left({2\over L}+{1\over \mu}\right) \|\bar{w}_{k,N_k}\|^2}) +(1-\alpha_k) {\mu \over {4}} \|y_k -
x_k\|^2 + {\alpha_k (1-\alpha_k)\tau_k\over \tau_{k+1}}{\mu \over
		{4}}\|x_k-v_k\|^2\\
&\geq {F}(y_{k+1}) +(1-\alpha_k)h(x_k)^T\overbrace{\Big({\alpha_k\tau_k
		\over \tau_{k+1}}(v_k-x_k)+(y_k-x_k)\Big)}^{\text{{\tiny Term
	(a)}}} +\overbrace{\Big({1\over 4L}-{\alpha_k^2\over
		2\tau_{k+1}}\Big)}^{\text{{\tiny Term
	(b)}}}\|h(x_k)\|^2\\
& 	-(1-\alpha_k){\left({2\over L}+{1\over \mu}\right) \|\bar{w}_{k,N_k}\|^2}-(1-\alpha_k)p_k
=  {F}(y_{k+1})-(1-\alpha_k){\left({2\over L}+{1\over \mu}\right) \|\bar{w}_{k,N_k}\|^2}-(1-\alpha_k)p_k,
\end{align*}
where the last inequality follows noting that terms (a) and (b) are zero from recalling that
${2}L\alpha_k^2=\tau_{k+1}$ and $x_k={1\over \tau_k+{1\over 2}\alpha_k
	\mu}(\alpha_k \tau_k v_k+\tau_{k+1}y_k)$ (by Lemma~\ref{lemma_b3}). 
By choosing
$p_{k+1}=(1-\alpha_k){\left({2\over L}+{1\over \mu}\right)\|\bar{w}_{k,N_k}\|^2}+(1-\alpha_k)p_k,$ we
have
that~\footnote{Update rule for $x_k$, according to Lemma \ref{lemma_b3}, is equivalent to that in the algorithm. Also, compared with the approach by Nesterov, we employ inexact (rather than exact) gradients, the key difference in the proof is term(c)}
$\phi_{k+1}^*\geq {F}(y_{k+1})-\overbrace{p_{k+1}}^{\text{{\tiny Term
	(c)}}}.$	
\end{proof} 
{Before analyzing the rate of convergence, we proceed to examine the limiting behavior of the sequence  $\{\lambda_k\}$ and show that $\lambda_k \to \sqrt{\kappa}$, where $\kappa$ denotes the condition number of the problem. } 

\begin{lemma}[{\bf Properties of $\{\lambda_k\}$}] \label{bound lambda}
Suppose sequence $\{\lambda_k\}_{k\geq 1}$ {is defined by the recursion} 
\begin{align} \label{def-lambda}
\lambda_{k+1}:=\frac{1-\frac{\lambda_k^2}{{4}\kappa}+\sqrt{\left(1-\frac{\lambda_k^2}{{4}\kappa}\right)^2+4\lambda_k^2}}{2},.
\end{align}
where  $\lambda_1\in (1, {2}{\sqrt \kappa}].$
 Then $\{\lambda_k\}$ is an increasing {and bounded} sequence, such that 
$\lim_{k\rightarrow \infty} \lambda_{k}={2}\sqrt \kappa.$
\end{lemma}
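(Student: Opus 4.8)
The plan is to analyze the scalar recursion \eqref{def-lambda} directly as a fixed-point iteration. First I would identify the fixed point: setting $\lambda_{k+1} = \lambda_k = \lambda$ in \eqref{def-lambda} gives $2\lambda - (1 - \lambda^2/\kappa) = \sqrt{(1-\lambda^2/\kappa)^2 + 4\lambda^2}$, and squaring (after checking the left side is nonnegative on the relevant range) yields, after cancellation, $4\lambda^2 - 4\lambda(1-\lambda^2/\kappa) = 4\lambda^2$, i.e. $\lambda(1 - \lambda^2/\kappa) = 0$, so $\lambda = \sqrt{\kappa}$ is the unique positive fixed point. It is cleaner to rewrite the recursion in the equivalent polynomial form $\lambda_{k+1}^2 - \lambda_{k+1}(1 - \lambda_k^2/\kappa) - \lambda_k^2 = 0$ (this is exactly the identity $\lambda_k^2 = \lambda_{k+1}(\lambda_{k+1}-1)/(1 - \lambda_{k+1}/\kappa)$ already derived in \eqref{rule by alpha 3}), which is the form I would use for all monotonicity estimates since it avoids square roots.

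Next I would establish the two invariants by induction: (a) $\lambda_k \le \sqrt{\kappa}$ for all $k$, and (b) $\lambda_k < \lambda_{k+1}$ for all $k$ (using $\lambda_1 > 1$). For (a): assuming $\lambda_k \le \sqrt{\kappa}$, the quadratic $q(t) = t^2 - t(1-\lambda_k^2/\kappa) - \lambda_k^2$ satisfies $q(\sqrt\kappa) = \kappa - \sqrt\kappa(1-\lambda_k^2/\kappa) - \lambda_k^2 = \kappa - \sqrt{\kappa} - \lambda_k^2(1 - 1/\sqrt{\kappa}) = (1-1/\sqrt\kappa)(\kappa - \lambda_k^2) \ge 0$, and since $\lambda_{k+1}$ is the larger root of $q$ and $q$ is increasing past its larger root, $q(\sqrt\kappa)\ge 0$ forces $\lambda_{k+1} \le \sqrt{\kappa}$; one must also check $\kappa \ge 1$ so that $1 - 1/\sqrt{\kappa}\ge 0$. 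For (b): evaluate $q(\lambda_k) = \lambda_k^2 - \lambda_k(1-\lambda_k^2/\kappa) - \lambda_k^2 = -\lambda_k(1 - \lambda_k^2/\kappa) \le 0$ (using (a)), and $q(\lambda_k) = 0$ only if $\lambda_k = \sqrt{\kappa}$; combined with $q$ being increasing past its larger root and $\lambda_{k+1}$ being that larger root, $q(\lambda_k) \le 0$ gives $\lambda_{k+1} \ge \lambda_k$, strictly unless $\lambda_k = \sqrt\kappa$. (The base case $\lambda_1 \in (1,\sqrt\kappa]$ is given; if $\lambda_1 = \sqrt\kappa$ the sequence is constant and the claim is immediate, so assume $\lambda_1 < \sqrt\kappa$ and strict monotonicity propagates.)

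Finally, having shown $\{\lambda_k\}$ is increasing and bounded above by $\sqrt\kappa$, it converges to some limit $L^* \le \sqrt\kappa$ with $L^* \ge \lambda_1 > 1$. Passing to the limit in the continuous recursion \eqref{def-lambda} (the right-hand side is a continuous function of $\lambda_k$), $L^*$ must be a fixed point, and by the fixed-point computation above the only fixed point in $(1,\sqrt\kappa]$ is $\sqrt\kappa$; hence $\lambda_k \to \sqrt\kappa$. I expect the main obstacle to be purely bookkeeping: carefully justifying that $\lambda_{k+1}$ as given by \eqref{def-lambda} is indeed the larger root of the quadratic $q$ and that the sign analysis of $q$ at the test points $\sqrt\kappa$ and $\lambda_k$ translates correctly into the desired inequalities — i.e. getting the direction of every inequality right and handling the degenerate equality case $\lambda_k = \sqrt\kappa$ cleanly. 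No deep idea is needed beyond the monotone-convergence-plus-fixed-point argument.
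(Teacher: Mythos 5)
Your proposal is correct and follows essentially the same route as the paper's proof: establish $\lambda_k\leq\sqrt{\kappa}$ and monotonicity by induction using the equivalent quadratic identity $\lambda_k^2=\tfrac{\lambda_{k+1}(\lambda_{k+1}-1)}{1-\lambda_{k+1}/\kappa}$, then conclude by monotone convergence and identification of the unique fixed point $\sqrt{\kappa}$. The only difference is organizational (sign analysis of the quadratic $q$ at the test points $\sqrt{\kappa}$ and $\lambda_k$, noting its smaller root is negative, versus the paper's chains of equivalences), which is immaterial.
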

\begin{proof} 
First by induction we show that sequence $\{\lambda_k\}$ is bounded above by {${2}\sqrt \kappa$}. By assumption, $\lambda_1\leq {{2}\sqrt \kappa}$, we assume $\lambda_k\leq {{2}\sqrt \kappa}$  and {proceed to show that}  $\lambda_{k+1}\leq {2}\sqrt \kappa$:
\begin{align*}
\lambda_{k+1}&=\frac{1-\frac{\lambda_k^2}{{4}\kappa}+\sqrt{\left(1-\frac{\lambda_k^2}{{4}\kappa}\right)^2+4\lambda_k^2}}{2} \Leftrightarrow \lambda_k^2 = \frac{\lambda_{k+1}(\lambda_{k+1}-1)}{1-\frac{\lambda_{k+1}}{{4}\kappa}}\\
&\implies \lambda_k\leq {2}\sqrt \kappa \Leftrightarrow \frac{\lambda_{k+1}(\lambda_{k+1}-1)}{1-\frac{\lambda_{k+1}}{{4}\kappa}}\leq {4}\kappa \Leftrightarrow \lambda_{k+1}^2\leq {4}\kappa \Leftrightarrow \lambda_{k+1}\leq {2}\sqrt \kappa.
\end{align*}
Since the sequence is increasing and bounded above, {its} limit exists. Suppose, $\lim_{k\rightarrow \infty} \lambda_{k+1}=\lambda$, implying 
$\lambda=\frac{1-\frac{\lambda^2}{{4}\kappa}+\sqrt{\left(1-\frac{\lambda^2}{{4}\kappa}\right)^2+4\lambda^2}}{2} \implies \lambda={2}\sqrt \kappa.$
Second we show that sequence $\{\lambda_k\}$ is increasing, i.e. $\lambda_{k+1}\geq \lambda_k$, which can be written equivalently by replacing {the recursive} rule $\lambda_{k+1}$ as follows
\begin{align*}
\tfrac{1-\frac{\lambda_k^2}{{4}\kappa}+\sqrt{(1-\frac{\lambda_k^2}{{4}\kappa})^2+4\lambda_k^2}}{2}\geq \lambda_k& \Leftrightarrow \left(1-\frac{\lambda_k^2}{{4}\kappa}\right)^2+4\lambda_k^2 \geq \left(\frac{\lambda_k^2}{{4}\kappa}-1+2\lambda_k\right)^2 \Leftrightarrow 4\lambda_k\left(1-\frac{\lambda_k^2}{{4}\kappa}\right) \leq 0 \Leftrightarrow \lambda_k\leq{2} \sqrt{\kappa}. 
\end{align*}
\end{proof} 
{We are now in a position to provide our main proposition that provides a bridge towards deriving rate statements and oracle complexity bounds. }

{\bf Proof of \sc  Lemma \ref{prop-err-bd}}.  \proof{} We have that:
\begin{align*}
& \quad  \mathbb{E}[\phi_{k+1}(x)]   \overset{\eqref{strong_1}}{=}(1-\alpha_k)\mathbb{E}[\phi_k(x)] +\alpha_k\mathbb{E}\Big[
	F(y_{k+1})+{1\over {4}L}\|h(x_k)\|^2+{\mu \over
		{4}}\|x-x_k\|^2+h(x_k)^T(x-x_k)\Big] \\&\leq(1-\alpha_k)\mathbb{E}[\phi_k(x)]+ \alpha_k \mathbb{E}[F(x)] 
	 + \alpha_k {\left({2\over L}+{1\over \mu}\right)\mathbb{E}[\|\bar{w}_{k,N_k}\|^2]}.
\end{align*}
By rearranging terms and setting $x=x^*$ in the inequality above, we obtain 
\begin{align*}
  & \mathbb{E}[\phi_{k+1}(x^*)-F(x^*)] \leq (1-\alpha_k) \mathbb{E}[\phi_k(x^*)-F(x^*)]  +{\left({2\over L}+{1\over \mu}\right)\mathbb{E}[\|\bar{w}_{k,N_k}\|^2]} \\
& \leq (1-\alpha_k)(1-\alpha_{k-1})\mathbb{E}[\phi_{k-1}(x^*)-F(x^*)]+\alpha_k{\left({2\over L}+{1\over \mu}\right) \mathbb{E}[\|\bar{w}_{k,N_k}\|^2]}
+\alpha_k(1-\alpha_{k-1}){\left({2\over L}+{1\over \mu}\right)\mathbb{E}[\|\bar{w}_{k-1,N_{k-1}}\|^2]]}\\
&\leq \left(\prod_{i=1}^{k}(1-\alpha_i)\right)\mathbb{E}[\phi_1(x^*)-F(x^*)]+\alpha_k\sum_{i=0}^{k-1} \left(\prod_{j=0}^{i-1} (1-\alpha_{k-j})\right) {\left({2\over L}+{1\over \mu}\right)\mathbb{E}[\|\bar{w}_{k-i,N_{k-i}}\|^2]}.
\end{align*}
From Lemma~\ref{bound lambda}, $\alpha_k={1\over \lambda_k}\in {[ \bar\alpha,1)}$ where {$\bar \alpha={1\over 2\sqrt \kappa}$}, and by recalling that $ \mathbb{E}[\|\bar{w}_{k-i,N_{k-i}}\|^2 \mid \mathcal{H}_{k-i}] \leq \nu^2/N_{k-i}$, we obtain the following sequence of inequalities:
\begin{align}\label{phi-k+1}
 \mathbb{E}[\phi_{k+1}(x^*)-F(x^*)]\nonumber&\leq \left(\prod_{i=1}^{k}(1-\alpha_i)\right)\mathbb{E}[\phi_1(x^*)-F(x^*)]+\sum_{i=0}^{k-1} \left( (1-\bar{\alpha})^i\right) {\left({2\over L}+{1\over \mu}\right)\mathbb{E}[\mathbb{E}[\|\bar{w}_{k-i,N_{k-i}}\|^2 \mid \mathcal{H}_{k-i}]]}\\
&\leq \left(\prod_{i=1}^{k}(1-\alpha_i)\right)\mathbb{E}[\phi_1(x^*)-F(x^*)]  +\sum_{i=0}^{k-1}  \left({2\over L}+{1\over \mu}\right){\frac{\nu^2(1-\bar\alpha)^i}{N_{k-i}}}.
\end{align}
By using Lemma \ref{phi} and \eqref{phi-k+1}, we may obtain
\begin{align}\label{E[p]}
 F(y_k)-F(x^*)&\nonumber
 \leq \mathbb{E}[\phi^*_k+p_k]-{F}(x^*) \leq \mathbb{E}[\phi_k(x^*)-{F}(x^*)]+\mathbb{E}[p_k]\\ \nonumber
& \leq  \left(\prod_{i=1}^{k-1}(1-\alpha_i)\right)\mathbb{E}[\phi_1(x^*)-F(x^*)]+\sum_{i=0}^{k-2}\left({2\over L}+{1\over \mu}\right) {\frac{\nu^2(1-\bar\alpha)^i }{{N_{k-1-i}}}}+\mathbb{E}[p_{k}]\\ \nonumber
&= \left(\prod_{i=1}^{k-1}(1-\alpha_i)\right)\mathbb{E}[F(\redd{x_0})-F(x^*)+{\tau_1\over
	2}\|x^*-\redd{x_0}\|^2]+\sum_{i=0}^{k-2}\left({2\over L}+{1\over \mu}\right) {\frac{\nu^2(1-\bar\alpha)^i }{{N_{k-1-i}}}}+\mathbb{E}[p_{k}]\nonumber 
\\ &  \leq (1-\bar \alpha)^{k-1}(D+{\mu\over 2}C^2) 
 +\sum_{i=0}^{k-2}\left({2\over L}+{1\over \mu}\right)  {\frac{\nu^2(1-\bar\alpha)^i}{{N_{k-1-i}}}}+\mathbb{E}[p_{k}],
\end{align}
where we used the fact that $\tau_1=\mu$ and {$\alpha_k \in [\bar\alpha,1)$}. Next, we derive a bound on  $\mathbb{E}[p_k]$. By definition, we have
$p_k = \left(1-\bar
\alpha\right){\left({2\over L}+{1\over \mu}\right)\|\bar{w}_{k-1,N_{k-1}}\|^2}+\left(1-\bar
\alpha\right)p_{k-1}$, implying that
\begin{align*}
p_k&=\left(1-\bar \alpha\right){\left({2\over L}+{1\over \mu}\right)\|\bar{w}_{k-1,N_{k-1}}\|^2}+\left(1-\bar \alpha\right)^2{\left({2\over L}+{1\over \mu}\right)\|\bar{w}_{k-2,N_{k-2}}\|^2}+\left(1-\bar \alpha\right)^2p_{k-2}\\
&=\hdots=\sum_{i=0}^{k-2}\left(1-\bar
\alpha\right)^{i+1}{\left({2\over L}+{1\over \mu}\right)\|\bar{w}_{k-i-1,N_{k-i-1}}\|^2}.
\end{align*}
By taking expectations and invoking
Assumptions~\ref{fistaass2} and \ref{ass_error2}(i), 
\begin{align} \label{bd-rate}
 & \quad \mathbb{E}\left[p_k\right] 
\leq \sum_{i=0}^{k-2}\left(1-\bar \alpha\right)^{i+1}{\left({2\over L}+{1\over \mu}\right)
  \mathbb{E}[\mathbb{E}[\|\bar{w}_{k-i-1,N_{k-i-1}}\|^2 \mid \mathcal{H}_{k-i-1}]]} \leq \sum_{i=0}^{k-2}{\left({2\over L}+{1\over \mu}\right){\nu^2 \left(1-\bar \alpha\right)^{i+1}\over N_{k-i-1}}}.
\end{align}
By substituting \eqref{bd-rate} in \eqref{E[p]}, we obtain the desired result. \qed

{\bf Proof of \sc Theorem \ref{th-rate-itercomp-sc}.}  
\proof{} 
\noindent {\bf (i).} From \eqref{bound_ac-VSSA_strong} and by the definition of $\theta$, we may claim the following:
\begin{align}\label{bound N_k}
& \mathbb{E}[F(y_K)-F^*]\nonumber\leq \left(D+{\mu\over 2}C^2\right)\theta^{K-1}+ \sum_{j=0}^{K-2} \theta^{j} \left({2\over L}+{1\over \mu}\right){\nu^2\over
	 {N_{K-j-1}}}+\sum_{j=0}^{K-2}\theta^{j+1}\left({2\over L}+{1\over \mu}\right){\nu^2 \over  {N_{K-j-1}}} \notag \\ 
&= \left(D+{\mu\over 2}C^2\right)\theta^{K-1}+ \left({2\over L}+{1\over \mu}\right)\theta\sum_{j=0}^{K-2} \theta^j {4\nu^2\over
	{N_{K-j-1}}} 
 \leq \left(D+{\mu\over 2}C^2\right)\theta^{K-1}+ \sum_{j=0}^{K-2} \theta^{j} \left({2\over L}+{1\over \mu}\right){2\nu^2\over {N_{K-j-1}}},
\end{align}
where in the last inequality we used the fact that $\bar \alpha+2\theta=2-\bar \alpha \leq2$. If $ N_{K-j-1}=\lfloor\rho^{-(K-j-1)}\rfloor$, by using Lemma \ref{fistabound for floor}, we have the following:
\begin{align}\label{b_ac2}
\nonumber& \quad  \sum_{i=0}^{K-2}\left({2\over L}+{1\over \mu}\right){2\theta^{j} \nu^2 \over  {\lfloor \rho^{-(K-j-1)}\rfloor}}
 \leq \sum_{i=0}^{K-2}\left({2\over L}+{1\over \mu}\right){\theta^{i} \nu^2 \over  { \rho^{-(K-i-1)}}}  \leq \left({2\over L}+{1\over \mu}\right){ \nu^2 } {\rho^{K-1}} \sum_{i=0}^{K-2}{ \left(\frac{\theta}{\rho}\right)^{i} } 
\\& \leq \left({2\over L}+{1\over \mu}\right)\left({ \nu^2 \rho \over  (\rho-\theta)}\right){\rho^{K-1}}.
\end{align}
By substituting \eqref{b_ac2} in \eqref{bound N_k}, the bound in terms
of $K$ is provided next where $\tilde C$ is defined in \eqref{bd-K}:
\begin{align}\label{bound for K}
& \quad \mathbb{E}[F(y_K)-F^*] 
  \leq \left(D+{\mu\over 2}C^2\right)\theta^{K-1}  +  
\left({2\over L}+{1\over \mu}\right)2 \nu^2 \sqrt{\kappa}{\rho^{K-1}} 
 \leq \tilde C \rho^{K-1} \\ 
\notag \mbox{ where }	\tilde C  & = \left(D+ \frac{\mu C^2}{2} \right) + \left({2\over L}+{1\over \mu}\right)2 \nu^2 \sqrt{\kappa}
		 \leq \left(D+ \frac{\mu C^2}{2} \right) + \frac{4 \nu^2 }{{ \mu}}+{2\nu^2\sqrt\kappa\over \mu} 
\end{align}
Furthermore, {we may derive the number of steps $K$ to obtain an \redd{$\epsilon$-optimal} solution:} 
\begin{align}
	\frac{1}{\rho} & = \frac{1}{(1-\frac{1}{{2}a \sqrt{\kappa}})} 
			 = \frac{{2}a \sqrt{\kappa}}{({2}a\sqrt{\kappa}-1)} \implies K \geq \frac{ \log(\tilde C) - \log(\epsilon)} {\log(1/\rho)} \approx \mathcal{O} (\sqrt{\kappa}) \log({\sqrt \kappa}/\epsilon).
\end{align}
\noindent {\bf (ii)} {To compute a vector $y_{K+1}$} satisfying $\mathbb{E}[F(y_{{K+1}})-F^*]\leq \epsilon$,we have $\tilde C{\rho}^{K}\leq \epsilon$, implying that  
$K = \lceil \log_{(1/  {\rho})}(\tilde C/\epsilon)\rceil.$
To obtain the optimal oracle complexity, we require $\sum_{k=1}^{K} N_k$ gradients. If $N_k=\lfloor \rho^{-k}\rfloor\leq \rho^{-k}$, we obtain the following since $(1-\rho) = (1 \slash (a \sqrt{\kappa}))$.
\begin{align*}
 \quad \sum_{k=1}^{K} \rho^{-k} 
 &\leq \frac{1}{\left(\frac{1}{{\rho}} -1\right)}\left({1\over \rho}\right)^{2+K}  \leq \frac{1}{\left(\frac{1}{{\rho}} -1\right)}\left({1\over \rho}\right)^{3+\log_{(1/  {\rho})}\left(\tilde C/\epsilon\right)}  \leq \left( \tilde C \over \epsilon\right)\frac{1}{\rho^2(1-{\rho})} 
 = \frac{ a \sqrt{\kappa} \tilde C}{\rho^2\epsilon}. \\
  \rho=1-{1\over {2}a\sqrt\kappa} \implies \rho^2 & = 1-2/({2}a\sqrt \kappa)+1/({4}a^2\kappa)= {{4}a^2\kappa-{4}a\sqrt \kappa+1\over {4}a^2\kappa}\geq{ {4}a^2\kappa-{8}a\kappa\over {4}a^2\kappa}={(a^2-2a)\kappa\over a^2\kappa}\\
  \implies & {\sqrt\kappa\over \rho^2}\leq {a^2\kappa \sqrt \kappa\over (a^2-2a)\kappa}=\left(a\over a-2\right)\sqrt\kappa
 \implies   \sum_{k=1}^{\log_{(1/{\rho})}\left(\tilde C/\epsilon\right)+1} \rho^{-k} \leq {{2}a^2\sqrt\kappa \tilde C\over (a-2)\epsilon}\\
& ={ \left(\left(D+ \frac{\mu C^2}{2} \right)+ \frac{4 \nu^2 }{{ \mu}}+{2\nu^2\sqrt\kappa\over \mu} \right)} \mathcal O\left({\sqrt \kappa\over \epsilon}\right).\qed
\end{align*}

{\bf Proof of \sc Lemma \ref{char-eta}. } 

\noindent (i) $\lim_{\eta \to 0} {\widehat{C}}(\eta) = +\infty$ and $\lim_{\eta \to +\infty} {\widehat{C}}(\eta) = +\infty$ since $\lim_{\eta \to 0} \tilde{\kappa}(\eta) = +\infty$ and $\lim_{\eta \to +\infty} \tilde{\kappa}(\eta) = 1. $ In other words, $\bar{C}(\eta)$ is a coercive function on the set $\{\eta: \eta \geq 0\}$.

\noindent (ii) We observe that for $\eta > 0$,  
$$ \tilde{\kappa}(\eta) = 1+ \tfrac{1}{\eta \mu} > 0,\qquad \tilde{\kappa}(\eta)' = -\tfrac{1}{\eta^2 \mu} < 0, \qquad \tilde{\kappa}''(\eta) = \tfrac{2}{\eta^3 \mu} > 0. $$
Furthermore, $Q(\eta) = \max\{\eta^2M^2, 4 \Delta^2\}$ and $\bar{\eta} \triangleq \tfrac{2\Delta}{M}$. Therefore, we have that $Q(\eta)$ is a.e. twice  differentiable and its  Clarke generalized gradient and Hessian are defined as follows. 
\begin{align} \partial_{\eta} Q(\eta) = \begin{cases} 
			\{2\eta M^2\}, & \eta > \bar{\eta} \\
			[0,2\bar{\eta} M^2], & \eta =  \bar{\eta}\\
			\{0\},  &  \eta <  \bar{\eta}
	\end{cases} \mbox{ and }  
 \partial^2_{\eta}Q(\eta) = \begin{cases} 
			2 M^2, & \eta > \bar{\eta} \\
			\left\{ 2\alpha M^2 \mid \alpha \in [0,1]\right\} & \eta = \bar{\eta}, \\
			0.  &  \eta <  \bar{\eta}
	\end{cases} 
\end{align} 
From ~\cite[Prop.~7.1.9]{facchinei02finite} and by recalling that $\tilde{\kappa}(\eta)$ is continuously differentiable in $\eta$, we may define $\partial \widehat{C}(\eta)$ as follows.   
\begin{align}\notag
 \partial_{\eta} \widehat{C}(\eta) & = \partial [2D \eta \tilde{\kappa}] + \partial [{8 \tilde{\kappa}(\eta)^{5/2}Q(\eta)a}] 
		 =   2D\eta \tilde{\kappa}' + 2D \tilde{\kappa} + 20  \tilde{\kappa}^{3/2} \tilde{\kappa}' Q(\eta) a + 8 \tilde{\kappa}^{5/2} a \partial Q(\eta) \\
		& = \begin{cases}
	\left\{2D\eta \tilde{\kappa}' + 2D \tilde{\kappa} + 20  \tilde{\kappa}^{3/2} \tilde{\kappa}' Q(\eta) a + 8 \tilde{\kappa}^{5/2} a Q'(\eta)\right\}, & \eta > \bar{\eta} \\
	\left\{2D\bar{\eta} \tilde{\kappa}' + 2D \tilde{\kappa} + 20  \tilde{\kappa}^{3/2} \tilde{\kappa}' Q(\bar{\eta}) a + 8 \tilde{\kappa}^{5/2} a (2\alpha \bar{\eta} M^2) \mid \alpha \in [0,1]\right\}, & \eta = \bar{\eta} \\
	\left\{2D\eta \tilde{\kappa}' + 2D \tilde{\kappa} + 20  \tilde{\kappa}^{3/2} \tilde{\kappa}' Q(\eta) a\right\}, & \eta < \bar{\eta} 
\end{cases}
\end{align}
We may then define the Clarke generalized Hessian of $\widehat{C}$ as follows. 
 \begin{align}
\partial^2_{\eta} \widehat{C}(\eta) & = \begin{cases} 
\left\{\begin{aligned}	& \left\{  4D \tilde{\kappa}' + 2D \eta \tilde{\kappa}'' + 30  \tilde{\kappa}^{1/2} (\tilde{\kappa}')^2 Q(\eta) a +  20  \tilde{\kappa}^{3/2} \tilde{\kappa}'' Q(\eta) a +  20  \tilde{\kappa}^{3/2} \tilde{\kappa}' (2\eta M^2) a \right. \\
	& \left. + 20 \tilde{\kappa}^{3/2} \tilde{\kappa}' (2\eta M^2) a+  8  \tilde{\kappa}^{5/2} (2M^2) a \right\} \end{aligned} \right\},  & \eta > \bar{\eta} \\
\left\{\begin{aligned}	& \left\{  4D \tilde{\kappa}' + 2D \eta \tilde{\kappa}'' + 30  \tilde{\kappa}^{1/2} (\tilde{\kappa}')^2 Q(\eta) a +  20  \tilde{\kappa}^{3/2} \tilde{\kappa}'' Q(\eta) a +  20  \tilde{\kappa}^{3/2} \tilde{\kappa}' (2\alpha \eta M^2) a \right. \\
	& \left. + 20 \tilde{\kappa}^{3/2} \tilde{\kappa}' (2\alpha \bar{\eta} M^2) a+  8  \tilde{\kappa}^{5/2} (2\alpha M^2) a \mid \alpha \in [0,1]\right\} \end{aligned} \right\},  & \eta = \bar{\eta} \\
 \left\{  4D \tilde{\kappa}' + 2D \eta \tilde{\kappa}'' + 30
\tilde{\kappa}^{1/2} (\tilde{\kappa}')^2 Q(\eta) a +  20  \tilde{\kappa}^{3/2}
\tilde{\kappa}'' Q(\eta) a\right\}. & \eta < \bar{\eta} \\
\end{cases}\notag
\end{align} 
We now proceed to show that $H \succ 0$ for all $H \in \partial^2 \widehat{C}(\eta)$ and for all $\eta > 0$. 
  
\noindent Case 1: $0 < \eta < \bar{\eta}$. In this setting, $Q'(\eta) = Q''(\eta) = 0$. It follows that $\partial^2 \widehat{C}(\eta)$ is a singleton given by the scalar $H$ and it suffices to show that $H > 0$. This follows as shown next.  
\begin{align*}
 H & =  4D \tilde{\kappa}' + 2D \eta \tilde{\kappa}'' + 30  \tilde{\kappa}^{1/2} (\tilde{\kappa}')^2 Q(\eta) a +  20  \tilde{\kappa}^{3/2} \tilde{\kappa}'' Q(\eta) a \\
	& = 2D (\tfrac{2}{\eta^2 \mu} - \tfrac{2}{\eta^2 \mu}) + \underbrace{30  \tilde{\kappa}^{1/2} (\tilde{\kappa}')^2 Q(\eta) a +  20  \tilde{\kappa}^{3/2} \tilde{\kappa}'' Q(\eta) a}_{\ > \ 0} \ >    0.  
\end{align*}

\noindent Case 2: $\eta > \bar{\eta}$.  Since  $Q'(\eta) = 2\eta M^2$ and $Q''(\eta) =2 M^2$ for $\eta > \bar{\eta}$, we have that $\partial^2 \widehat{C}(\eta) = \{H\}$, where it suffices to show that $H > 0$. This follows as shown next.  
\begin{align*}
 H  & =  4D \tilde{\kappa}' + 2D \eta \tilde{\kappa}'' + 30  \tilde{\kappa}^{1/2} (\tilde{\kappa}')^2 Q(\eta) a +  20  \tilde{\kappa}^{3/2} \tilde{\kappa}'' Q(\eta) a +  40  \tilde{\kappa}^{3/2} \tilde{\kappa}' Q'(\eta) a
	 +  8  \tilde{\kappa}^{5/2}  Q''(\eta) a \\
		& = 2D (\tfrac{2}{\eta^2 \mu} - \tfrac{2}{\eta^2 \mu}) + 30  \tilde{\kappa}^{1/2} (\tilde{\kappa}')^2 Q(\eta) a +  8  \tilde{\kappa}^{5/2}  Q''(\eta) a
		+  \tilde{\kappa}^{3/2}  ( 20\tilde{\kappa}'' Q(\eta) +  40 \tilde{\kappa}' Q'(\eta))a \\
		& \geq 30  \tilde{\kappa}^{1/2} (\tilde{\kappa}')^2 Q(\eta) a +  8  \tilde{\kappa}^{5/2}   Q''(\eta) a
		+  \tilde{\kappa}^{3/2}  \left(\tfrac{40\eta^2 M^2}{\eta^3 \mu}\right)a - \tilde{\kappa}^{3/2} \left(\tfrac{80 \eta M^2}{\eta^2 \mu}\right)a \\
		& \geq 30  \tilde{\kappa}^{1/2} \tfrac{M^2}{\eta^2\mu^2}a +  16  \tilde{\kappa}^{5/2}  M^2a
		+  \tilde{\kappa}^{1/2} (1+\tfrac{1}{\eta \mu})  \left(\tfrac{40 M^2}{\eta \mu}\right) a -  \tilde{\kappa}^{1/2} \left(\tfrac{80 M^2}{\eta^2 \mu^2}\right)a. 
		\end{align*}
where the first term follows from $Q(\eta) = 2\eta^2 M^2$ and $\tilde{\kappa}' = -\tfrac{1}{\eta^2\mu}$ and the last term follows from $-\tilde{\kappa}^{3/2} \left( \tfrac{80 \eta M^2}{\eta^2 \mu}\right)a \leq -\tilde{\kappa}^{1/2} \left( \tfrac{80 M^2}{\eta^2\mu^2}\right)a$ since $-\tilde{\kappa}^{3/2} = -\tilde{\kappa}^{1/2} (1+\tfrac{1}{\eta \mu}) \leq -\tfrac{\tilde{\kappa}^{1/2}}{\eta \mu}.$   
\begin{align*}
 & \quad   \tilde{\kappa}^{1/2} \left(\tfrac{30M^2}{\eta^2\mu^2}\right)a +  16  \tilde{\kappa}^{5/2}M^2a +  \tilde{\kappa}^{1/2} (1+(1+\tfrac{1}{\eta \mu}))  \left(\tfrac{40 M^2}{\eta \mu}\right)a -  \tilde{\kappa}^{1/2} \left(\tfrac{80 M^2}{\eta^2 \mu^2}\right)a\\
& \geq \tilde{\kappa}^{1/2} \left(\tfrac{30M^2}{\eta^2\mu^2}\right)a +  16  \tilde{\kappa}^{1/2} (1+\tfrac{2}{\eta \mu}+\tfrac{1}{\eta^2\mu^2})M^2a +  \tilde{\kappa}^{1/2} \left( \tfrac{40 M^2}{\eta^2 \mu^2}\right)a -  \tilde{\kappa}^{1/2}\left( \tfrac{80 M^2}{\eta^2 \mu^2}\right)a\\
& \geq \tilde{\kappa}^{1/2} \left(\tfrac{30M^2}{\eta^2\mu^2}\right)a +  \tilde{\kappa}^{1/2}\left( \tfrac{16 M^2 }{\eta^2 \mu^2}\right)a +  \tilde{\kappa}^{1/2} \left( \tfrac{40 M^2}{\eta^2 \mu^2}\right)a -  \tilde{\kappa}^{1/2}\left( \tfrac{80 M^2}{\eta^2 \mu^2}\right)a\\
& =  \tilde{\kappa}^{1/2} \left( \tfrac{6M^2}{\eta^2\mu^2}\right)a > 0. 
\end{align*} 
\noindent Case 3: $\eta = \bar{\eta}$.  Suppose $Q'(\bar{\eta}) \in \partial \widehat{C}(\bar{\eta})$ and $H \in \partial^2 \widehat{C}(\bar{\eta})$,  where $Q'(\bar{\eta}) = 2\alpha \bar{\eta} M^2$ and $H =2\alpha  M^2$ and $\alpha \in [0,1]$. It suffices to show that $H > 0$ for $\alpha \in [0,1]$, as we proceed to do next.
\begin{align*}
 H  & =  4D \tilde{\kappa}' + 2D \eta \tilde{\kappa}'' + 30  \tilde{\kappa}^{1/2} (\tilde{\kappa}')^2 Q(\bar{\eta}) a +  20  \tilde{\kappa}^{3/2} \tilde{\kappa}'' Q(\eta) a +  40  \tilde{\kappa}^{3/2} \tilde{\kappa}' Q'(\eta) a
	 +  8  \tilde{\kappa}^{5/2}  Q''(\bar{\eta}) a \\
		& = 2D (\tfrac{2}{\bar{\eta}^2 \mu} - \tfrac{2}{\bar{\eta}^2 \mu}) + 30  \tilde{\kappa}^{1/2} (\tilde{\kappa}')^2 Q(\eta) a +  8  \tilde{\kappa}^{5/2}  Q''(\bar{\eta}) a
		+  \tilde{\kappa}^{3/2}  ( 20\tilde{\kappa}'' Q(\bar{\eta}) +  40 \tilde{\kappa}' Q'(\bar{\eta}))a \\
		& \geq 30  \tilde{\kappa}^{1/2} (\tilde{\kappa}')^2 Q(\bar{\eta}) a +  8  \tilde{\kappa}^{5/2}   Q''(\bar{\eta}) a
		+  \tilde{\kappa}^{3/2}  \left(\tfrac{40\bar{\eta}^2 M^2}{\eta^3 \mu}\right)a - \tilde{\kappa}^{3/2} \left(\tfrac{80 \alpha \eta M^2}{\eta^2 \mu}\right)a \\
		& \geq \tilde{\kappa}^{1/2} \left(\tfrac{30M^2}{\bar{\eta}^2\mu^2}\right)a +  16  \tilde{\kappa}^{5/2}  \alpha M^2a
		+  \tilde{\kappa}^{1/2} (1+\tfrac{1}{\bar{\eta} \mu})  \left(\tfrac{40 M^2}{\bar{\eta} \mu}\right) a -  \tilde{\kappa}^{1/2} \left(\tfrac{80 \alpha M^2}{\bar{\eta}^2 \mu^2}\right)a \\ 
 & \geq \tilde{\kappa}^{1/2} \left(\tfrac{30M^2}{\bar{\eta}^2\mu^2}\right)a +  16  \tilde{\kappa}^{5/2} \alpha M^2a +  \tilde{\kappa}^{1/2} \left( \tfrac{40 M^2}{\bar{\eta}^2 \mu^2}\right)a -  \tilde{\kappa}^{1/2}\left( \tfrac{80 M^2}{\bar{\eta}^2 \mu^2}\right)a\\
& \geq \tilde{\kappa}^{1/2} \left(\tfrac{30M^2}{\eta^2\mu^2}\right)a +  16  \tilde{\kappa}^{1/2} (1+\tfrac{2}{\eta \mu}+\tfrac{1}{\eta^2 \mu^2}) \alpha M^2a +  \tilde{\kappa}^{1/2} \left( \tfrac{40 M^2}{\bar{\eta}^2 \mu^2}\right)a -  \tilde{\kappa}^{1/2}\left( \tfrac{80 \alpha M^2}{\bar{\eta}^2 \mu^2}\right)a\\
& \geq \tilde{\kappa}^{1/2} \left(\tfrac{30M^2}{\bar{\eta}^2\mu^2}\right)a +  \tilde{\kappa}^{1/2}\left( \tfrac{16 \alpha M^2 }{\bar{\eta}^2 \mu^2}\right)a +  \tilde{\kappa}^{1/2} \left( \tfrac{40 M^2}{\bar{\eta}^2 \mu^2}\right)a -  \tilde{\kappa}^{1/2}\left( \tfrac{80 \alpha M^2}{\bar{\eta}^2 \mu^2}\right)a\\
& = \tilde{\kappa}^{1/2} \left(\tfrac{30M^2}{\bar{\eta}^2\mu^2}\right)a +  \tilde{\kappa}^{1/2} \left( \tfrac{40 M^2}{\bar{\eta}^2 \mu^2}\right)a -  \tilde{\kappa}^{1/2}\left( \tfrac{64 \alpha M^2}{\bar{\eta}^2 \mu^2}\right)a\\
& \overset{\alpha \leq 1}{\geq} \tilde{\kappa}^{1/2} \left(\tfrac{30M^2}{\bar{\eta}^2\mu^2}\right)a +  \tilde{\kappa}^{1/2} \left( \tfrac{40 M^2}{\bar{\eta}^2 \mu^2}\right)a -  \tilde{\kappa}^{1/2}\left( \tfrac{64  M^2}{\bar{\eta}^2 \mu^2}\right)a\\
& =  \tilde{\kappa}^{1/2} \left( \tfrac{6M^2}{\bar{\eta}^2\mu^2}\right)a  > 0. 
\end{align*}
Consequently, we have that $H > 0$ for $H \in \partial^2 \widehat{C}(\eta)$ and $\eta > 0$. It follows that $\widehat{C}$ is strictly convex for $\eta > 0$ (cf.~\cite[Ex.~2.2.]{hu84generalized}). Since $\widehat{C}(0) = +\infty$, we may then conclude from the definition of convexity that $\widehat{C}$ is a strictly convex function on $\{\eta \mid \eta \geq 0\}$. 

\noindent (iii) By part (i), a minimizer of $\widehat{C}(\eta)$ exists in $\{\eta: \eta \geq 0\}$. By part (ii), this minimizer is necessarily unique since $\widehat{C}$ is strictly convex. Therefore $\widehat{C}$ has a unique minimizer on $\{\eta \mid \eta \geq 0\}$.     \qed

{\bf Proof of \sc Proposition \ref{bd-SSG}. } 
{\begin{proof} 
    \noindent (a).  Since $\mathbb{E}[\uss{\F}(\uss{\bullet},\uss{\omega})+\tfrac{1}{2\eta}\|x_k-\uss{\bullet}\|^2]$ is $\tilde\mu$-strongly convex, where $\tilde\mu=\mu+\tfrac{1}{\eta}$ \uss{and $x_k$ is $\mathcal{F}_k$-measurable}, we may  utilize the proof technique in~\cite[{Section 5.9.1}]{shapiro09lectures} to obtain the following {for $j \geq 0$.}  
\begin{align}\label{sub_g_1}
    \mathbb E[\|z_{k,j+1}-z_{k}^*\|^2 \uss{ \ \mid \ \uss{\mathcal{F}_k} }]\nonumber&\leq (1-2\sigma_j\tilde\mu)\mathbb E[\|z_{k,j}-z_{k}^*\|^2\uss{ \ \mid \ \uss{\mathcal{F}_k} }]+\gamma_j^2(M_1^2\uss{\mathbb{E}\left[\|z_{k,j}\|^2 \mid \uss{\mathcal{F}_k}\right]} + M_2^2 \|x_k\|^2 + M_3^2)\\
                                                                                    &\overset{\eqref{bd-sub-G}}\leq (1-2\sigma_j\tilde\mu+{2}\sigma_j^2 M_1^2)\mathbb E[\|z_{k,j}- z_{k}^*\|^2 \uss{\ \mid  \uss{\mathcal{F}_k}} ]\notag \\
                                                                                    & +\sigma_j^2(2M_1^2\uss{\mathbb{E}[\| z_{k}^*\|^2 \mid \uss{\mathcal{F}_k}]}+ M_2^2\|x_k\|^2 + M_3^2). 
\end{align}
If $e_j \triangleq E[\|z_{k,j}- z_{k}^*\|^2 \uss{ \ \mid \uss{\mathcal{F}_k}} ]$ and $d_k \triangleq 2M_1^2\uss{\mathbb{E}[\| z_{k}^*\|^2 \mid \uss{\mathcal{F}_k}]}+ M_2^2\|x_k\|^2 + M_3^2$, {for} any $t_j>0$, we have that 
\begin{align}\label{bound ej}
e_{j+1}\leq (1-2\sigma_j\tilde\mu+2\sigma_j^2 M_1^2)e_j+\sigma_j^2d_k\implies t_{j+1}e_{j+1}&\leq t_{j+1}(1-2\sigma_j\tilde\mu+2\sigma_j^2 M_1^2)e_j+t_{j+1}\sigma_j^2d_k.
\end{align}
We intend to show that $t_{j+1}(1-2\sigma_j\tilde\mu+2\sigma_j^2 M_1^2)e_j\leq  t_je_j$. Let $\bar J, t_j$, and $\sigma_j$ be defined as 
\begin{align}\label{def t sig}
\bar J\triangleq \lceil \tfrac{2M_1^2}{\tilde \mu^2}-1\rceil, 
t_j   \triangleq \left. \begin{cases}
\left(1-\tfrac{\tilde \mu^2}{2M_1^2}\right)^{-j}, & j<\bar J \\
j, & j\geq \bar J
\end{cases} \right\}, \mbox{and} \left.
\sigma_j \triangleq \begin{cases}
\min\left\{\tfrac{1}{(j+1)\log(j+1)},\tfrac{\tilde \mu}{M_1^2} \right\}, & j<\bar J \\
\tfrac{1}{(j+1)\log(j+1)}, & j\geq \bar J
\end{cases}\right\}
\end{align}
For $j {\ \geq \ } \bar J$, we have the following. 
{\begin{align}
\label{bound t}
& \quad t_{j+1}(1-2\sigma_j\tilde\mu+2\sigma_j^2 M_1^2)\leq  t_j\qquad   
\Leftrightarrow \quad  (1-2\sigma_j\tilde\mu+2\sigma_j^2 M_1^2)\leq  \tfrac{t_j}{t_{j+1}}\\
\notag
\Leftrightarrow & \quad (1-\tfrac{t_j}{t_{j+1}}-2\sigma_j\tilde\mu+2\sigma_j^2 M_1^2)\leq 0  
\quad \Leftrightarrow  \quad \sigma_j \leq \tfrac{\tilde{\mu} + \sqrt{ \tilde \mu^2 - 2M_1^2 \left(1-\tfrac{t_j}{t_{j+1}}\right)}}{2M_1^2}.
\end{align}}
{From} \eqref{def t sig}, {we have that $\tfrac{t_j}{t_{j+1}} = (1-\tfrac{1}{j+1}) $ for $j \geq \bar J$.} {Consequently, $$ 2M_1^2(1-\tfrac{t_j}{t_{j+1}}) =\tfrac{ 2M_1^2}{j+1} \leq \tfrac{2M_1^2}{\big \lceil \tfrac{2M_1^2}{\tilde \mu^2}-1\big \rceil+1} \leq \tilde{\mu}^2 \implies 
\tilde \mu^2 - 2M_1^2 \left(1-\tfrac{t_j}{t_{j+1}}\right) \geq 0. $$ }  
Using \eqref{bound t}, we may show that \eqref{bound ej} is bounded as follows for $j\geq\bar J$:
\begin{align}\label{t1}
t_{j+1}e_{j+1}\nonumber&\leq t_{j+1}(1-2\sigma_j\tilde\mu+2\sigma_j^2 M_1^2)e_j+t_{j+1}\sigma_j^2d_k\leq   t_je_j+t_{j+1}\sigma_j^2d_k\leq t_0e_0+\overbrace{\sum_{{\ell}=0}^{\bar J-1}\sigma_{\ell}^2t_{{\ell}+1}d_k}^{\leq c_{\bar J} d_k}+{\sum_{\ell=\bar J}^j\sigma_{\ell}^2t_{{\ell}+1}d_k}\\ \notag
	& \leq t_0e_0 + c_{\bar J} d_k + \sum_{\ell = \bar J}^{j} \tfrac{\ell}{(\ell+1)^2 \log^2 (\ell+1)} d_k \leq t_0e_0 + c_{\bar J} d_k + \sum_{\ell = \bar J}^{j}\tfrac{1}{(\ell+1) \log (\ell+1)} d_k  \notag\\    
& \leq t_0e_0+ (c_{\bar J}+3) d_k \triangleq t_0 e_0+\bar d_k,
\end{align}
where \eqref{t1} follows from $\sum_{j=1}^\infty
\tfrac{1}{(j+1)\log(j+1)}\leq 3$. {Next, we derive a bound on $e_0 =
    \mathbb{E}[\|z_{k,0} - z_{k}^*\|^2  \ \mid \uss{\mathcal{F}_k} ]$.}
\begin{align*}
    \mathbb{E}[\|z_{k,0} - z_{k}^*\|^2  \ \mid \uss{\mathcal{F}_k}]
        &  = \mathbb{E}[\|x_k - z_{k}^*\|^2  \ \mid \uss{\mathcal{F}_k}]  \leq 2\uss{\|x_k - x^*\|^2} + 2\mathbb{E}[\|x^*-z^*_{k}\|^2 \ \mid \uss{\mathcal{F}_k}] \\
        & = 2\uss{\|x_k - x^*\|^2} + 2\mathbb{E}[\|\mbox{prox}_{\eta F} (x^*)- \mbox{prox}_{\eta F} (x_k)\|^2  \ \mid \uss{\mathcal{F}_k}] \leq 4\uss{\|x_k - x^*\|^2}, 
 \end{align*}
 \uss{where the last inequality is a result of $x_k$ being $\mathcal{F}_k$-measurable and non-expansivity of the prox. operator.} Similarly, $d_k$ can be bounded as follows.
\begin{align*}
    d_k & = (2M_1^2 \mathbb{E}[\|z^*_{k}\|^2  \ \mid \uss{\mathcal{F}_k}] + M_2^2 \uss{\|x_k\|^2} + M_3^2) \\
        & \leq 4M_1^2\mathbb{E}[\|z^*_{k}-x^*\|^2  \mid \uss{\mathcal{F}_k}] + 4M_1^2 [\|x^*\|^2] + 2M_2^2 \uss{\|x_k-x^*\|^2} + 2M_2^2 \|x^*\|^2 + M_3^2 \\
				 & 	 \leq (4M_1^2 + 2M_2^2) \uss{\|x_{k}-x^*\|^2} + (4M_1^2+2M_2^2) \|x^*\|^2+M_3^2, 
\end{align*}
where the last inequality follows from $\| z_k^*-x^*\| = \| \mbox{prox}_{\eta F}(x_k) - \mbox{prox}_{\eta F}(x^*) \| \leq \|x_k - x^*\|.$
Therefore, using \eqref{t1}, we may claim that $\mathbb E[\|z_{k,j}-z^*_k\|^2\uss{ \ \mid \uss{\mathcal{F}_k}}]\leq \tfrac{\hat a^2\|x_k-x^*\|^2+\hat b^2}{j}$, where $\hat a^2=4+4M_1^2+2M_2^2$ and $\hat b^2= (4M_1^2+2M_2^2) \|x^*\|^2+M_3^2.$
 \end{proof}}

{\bf Proof of \sc Theorem \ref{th-rate-itercomp-sc-sgd-general}. } 
{\begin{proof} (i) By using Theorem 3.10 in \cite{bubeck2015convex} to bound $\|\bar x_{k+1}-x^*\|^2\leq q\|x_k-x^*\|^2$, where $\tilde \kappa= \tfrac{\eta\mu+1}{\eta\mu}$, $q=1-\tfrac{1}{\tilde \kappa} = \tfrac{1}{\eta\mu+1} \in (0,1)$ if $\eta > 0$,  and $\gamma_k=\eta$, we may obtain the following \uss{where $(1+\delta) < \afo{\tfrac{1}{2q}+\tfrac{1}{2}}$.}
\blue{\begin{align}\label{vs-pm-1}
\notag
\mathbb{E}[\|x_{k+1}-x^*\|^2{ \ \mid \mathcal{F}_k}] & \leq {(1+\tfrac{1}{\delta})}\mathbb{E}[\|x_{k+1}-\bar{x}_{k+1}\|^2{ \ \mid \mathcal{F}_k}] + {(1+\delta)}\mathbb{E}[\|\bar{x}_{k+1}-x^*\|^2{ \ \mid \mathcal{F}_k}]  \\ \nonumber
    & \leq  {(1+\tfrac{1}{\delta})}\mathbb{E}[\|x_{k+1}-\bar{x}_{k+1}\|^2] + {(1+\delta)q}\mathbb{E}[\|{x}_{k}-x^*\|^2] \\ \nonumber
    & = {{(1+\tfrac{1}{\delta})}}\mathbb{E}[\|\tfrac{{\gamma_k}}{\eta}(x_{k}-z_{k,N_k})-\tfrac{{\gamma_k}}{\eta}(x_k-z^*_{k})\|^2 { \ \mid \mathcal{F}_k}] + {(1+\delta)q}{\|{x}_{k}-x^*\|^2} \\
\notag		& =   {(1+\tfrac{1}{\delta})}\tfrac{\gamma_k^2}{\eta^2}\mathbb{E}[\|(z_{k,N_k}-z^*_{k})\|^2 { \ \mid \mathcal{F}_k}] + {(1+\delta)q}{\|{x}_{k}-x^*\|^2} \\
            & = {(1+\tfrac{1}{\delta})}\mathbb{E}[\|(z_{k,N_k}-z^*_{k})\|^2 { \ \mid \mathcal{F}_k}] + {(1+\delta)q}\|{x}_{k}-x^*\|^2,
\end{align}}
 where \eqref{vs-pm-1} follows from ${\gamma_k}=\eta$. By Prop.~\ref{bd-SSG} , the first term on the right can be bounded as
\begin{align*}
    \mathbb{E}[\|(z_{k,N_k}-z^*_{k})\|^2 \uss{ \ \mid \mathcal{F}_k}] \leq \tfrac{\hat{a}^2 \uss{\|x_k-x^*\|^2} + \hat{b}^2}{N_k}, 
\end{align*}
where $N_k$ denotes the number of stochastic subgradient steps taken at major iteration $k$. \uss{Then by taking unconditional expectations, we have} 
\begin{align*}
    \mathbb{E}[\|x_{k+1}-x^*\|^2]  \leq \left(\uss{(1+\delta)q} + \tfrac{\uss{(1+1/\delta)} \hat{a}^2}{ N_k}\right) \mathbb{E}[\|{x}_{k}-x^*\|^2] + \tfrac{\afo{(1+1/\delta)}\hat{b}^2}{N_k}.
\end{align*}
Let $p_k \triangleq \uss{(1+\delta)q} + \tfrac{\uss{(1+1/\delta)}\hat{a}^2}{ N_k}$  and $N_k=\lfloor N_0 \rho^{-k}\rfloor$ for $k \geq 0$, where \afo{$N_0>\tfrac{(1+1/\delta)\hat a^2}{1-(1+\delta) q}$}. Note that $p_0<1$ and $\{p_k\}$ is a decreasing sequence based on the choice of $N_0$ and $\{N_k\}$. {We consider two cases.}

\noindent {\bf Case (a).} Let $\rho \neq p_0$ and $\rho \in (0,1)$. {In this instance, we obtain the following result.} 
 \begin{align*}
& \quad \mathbb{E}[\|x_{k+1}-x^*\|^2] \leq\mathbb{E}[\|x_{0}-x^*\|^2]{\prod_{i=0}^{k}}p_i  +\sum_{i=0}^{k}\left(\tfrac{\afo{(1+1/\delta)}{\hat{b}^2}{\prod_{j=0}^{i-1}}p_{k-j}}{N_{k-i}}\right)\\
&\leq p_0^{{k+1}}\mathbb{E}[\|x_{0}-x^*\|^2]+\tfrac{\rho^k{\afo{(1+1/\delta)}\hat{b}^2}}{N_0}\sum_{i=0}^{k}\left(\tfrac{p_0}{\rho}\right)^i\leq \mathcal{C} \left(\max\{\rho,p_0\}\right)^{{k+1}},\end{align*}
 where $\mathcal{C} \triangleq \left(\mathbb{E}[\|x_{0}-x^*\|^2]+\tfrac{\afo{(1+1/\delta)}{\hat{b}^2}/N_0}{1-\tfrac{\min\{\rho,p_0\}}{\max\{\rho,p_0\}}}\right).$

\noindent {\bf Case (b)}. Let $\rho = p_0$. { Consequently, we obtain the following result. 
 \begin{align*}
\mathbb{E}[\|x_{k+1}-x^*\|^2]  & \leq p_0^{k+1}\mathbb{E}[\|x_{0}-x^*\|^2]+\tfrac{p_0^{k+1}\afo{(1+1/\delta)}{\hat{b}^2}}{N_0} (k+1) = ap_0^{k+1} + b(k+1)p_0^{k+1}.   
\end{align*}
It can be shown that, {there exists $\hat p$ such that} $p_0 < \hat{p} < 1$. By analyzing $\displaystyle \max_{z \geq 0} z\left(\tfrac{p_0}{\hat p}\right)^z$, we may claim that 
$ kp_0^k < D \hat{p}^k$ for $k \geq 0$ and $\widehat D > \tfrac{1}{\ln(p_0/\hat{p})^e}.$
Consequently, for $\hat{p} \in (p_0,1)$ and $\widehat D > \tfrac{1}{\ln(p_0/\hat{p})^e}$, 
\begin{align*}
\mathbb{E}[\|x_{k+1}-x^*\|^2]  &\leq \mathcal{C}\hat{p}^{k+1}, \mbox{ where } \mathcal{C} \triangleq \left(\mathbb{E}[\|x_{0}-x^*\|^2]+ \tfrac{\afo{(1+1/\delta)}\hat{b}^2 \widehat D}{N_0}\right). 
\end{align*}}
(ii) Suppose $\rho = p_0$ and $\hat{p} \in (p_0, 1)$ and   to compute a vector $x_{K}$ satisfying $\mathbb{E}[\|x_{K}-x^*\|^2]\leq \epsilon$, we have $\mathcal{C}\hat{p}^{K}\leq \epsilon$ where $\mathcal{C}$ depends on $\hat{p}$. This  implies that  
$K = \lceil \log_{(1/  \hat{p})}(\mathcal{C}/\epsilon)\rceil.$ From the definition of $\hat{p}, p_0$, $q$ and by choosing \afo{$N_0=\tfrac{2(1+1/\delta)\hat a^2}{1-(1+\delta) q}$}, we obtain that 
{\begin{align*}
        \tfrac{1}{\log(1/\hat{p})} & = \tfrac{\log(1/p_0)}{\log(1/\hat{p})} \tfrac{1}{\log(1/p_0)} \leq  \tfrac{\log(1/p_0)}{\log(1/\hat{p})} \tfrac{1}{(1-p_0)} = \tfrac{\log(1/p_0)}{\log(1/\hat{p})}\tfrac{1}{1-\left(\uss{(1+\delta)}q+\tfrac{\uss{(1+1/\delta)} \hat{a}^2}{N_0}\right)} \\ 
                                   &  \uss{ \ \leq \ } \tfrac{\log(1/p_0)}{\log(1/\hat{p})}\afo{\left(\tfrac{1}{1-\left((1+\delta)q+\tfrac{1-(1+\delta)q}{2}\right)}\right) } = \tfrac{\log(1/p_0)}{\log(1/\hat{p})}\uss{\left(\tfrac{1}{\tfrac{1}{2}- \tfrac{(1+\delta)q}{2}}\right)}\leq \tfrac{\log(1/p_0)}{\log(1/\hat{p})}\uss{\left(\tfrac{1}{\tfrac{1}{4}-\tfrac{q}{4}}\right) } = \tfrac{4\log(1/p_0)}{\log(1/\hat{p})}\afo{\tilde \kappa},
\end{align*}
\uss{where the last inequality follows from $\tfrac{(1+\delta)q}{2} \leq \tfrac{1}{4}+\tfrac{q}{4}.$} }
Therefore, the iteration complexity is bounded  as ${\log(\mathcal{C}/\epsilon)/\log(1/p_0)} \leq  \left(\tfrac{4\log(1/p_0)}{\log(1/\hat{p})}\right)\tilde \kappa {\log} (\mathcal{C}/\epsilon)$. 
{Similarly, if $\rho \neq p_0$, since $\mathcal{C} \max\{\rho,p_0\}^k \leq \epsilon$, the iteration complexity is $\mathcal{O}(\tilde{\kappa}\log(\mathcal{C}/\epsilon))$.}

(iii) {Suppose $\rho = p_0$ and $\hat{p} \in (p_0,1)$}. To obtain the oracle complexity, we require $\sum_{k=1}^{{K}} N_k$ gradients where $K = \lceil \log_{(1/  \hat{p})}(\mathcal{C}/\epsilon)\rceil$. 
{\begin{align*}
 & \quad N_0\sum_{k=1}^{{K}} \rho^{-k} 
 \leq \tfrac{N_0}{\left(\frac{1}{{{\rho}}} -1\right)}\left(\tfrac{1}{{\rho}}\right)^{2+{K}}  \leq \tfrac{N_0}{\left(\frac{1}{{{\rho}}} -1\right)}\left(\tfrac{1}{{\rho}}\right)^{3+\log_{(1/  {\hat{p}})}\left(\mathcal C/\epsilon\right)}  \leq \tfrac{N_0}{{\rho}^2(1-{{\rho}})} \left(\tfrac{1}{\rho}\right)^{\log_{1/\hat{p}} (\mathcal C/\epsilon) } \\
	& =  \tfrac{N_0}{{\rho}^2(1-{{\rho}})} \left(\tfrac{1}{\rho}\right)^{\log_{1/\rho} (\mathcal C/\epsilon) \log_{1/\hat{p}} (1/\rho)} =
\tfrac{N_0}{{\rho}^2(1-{{\rho}})} \left(\tfrac{\mathcal C}{\epsilon}\right)^{\log_{1/\hat{p}}(1/\rho)} =  \left(\tfrac{p_0^2}{\rho^2}\right)\tfrac{N_0}{{p_0}^2(1-{{\rho}})} \left(\tfrac{\mathcal C}{\epsilon}\right)^{\log_{1/\hat{p}}(1/\rho)} \\
& \leq  
   \left(\tfrac{p_0^2}{\rho^2}\right)\afo{\tfrac{16(1+1/\delta) \hat{a}^2}{(1-q)^2}} \left(\tfrac{\mathcal C}{\epsilon}\right)^{\log_{\hat{p}}(1/\rho)}. 
 \end{align*}

It follows that the oracle complexity is $\mathcal{O}\left(\afo{\tilde{\kappa}^3}\left(\tfrac{\mathcal C}{\epsilon}\right)^{\log_{1/\hat{p}}(1/\rho)}\right).$ {Similarly, it can be shown that when $\rho > p_0$ (or $\rho < p_0$), the oracle complexity is $\mathcal{O}\left(\tfrac{\afo{\tilde{\kappa}^3} \mathcal{C}}{\epsilon}\right)$ (or $\mathcal{O}\left(\afo{\tilde{\kappa}^3}\left(\tfrac{ \mathcal{C}}{\epsilon}\right)^{\log_{1/p_0}(1/\rho)}\right)$)}.}
\end{proof} }

\uss{{\bf Proof of } {\sc Lemma~\ref{smooth_f}}.
    Since $\uss{\f}_{\eta}(x,\omega) \leq \uss{\f}(x,\omega) \leq \uss{\f}_{\eta}(x,\omega) + \uss{\eta}\beta(\omega)$ for any $x$, by taking expectations on both sides and recalling that $\mathbb{E}[\beta(\omega)] \leq \tilde{\beta}$, we have that 
\begin{align*}
    \mathbb{E}[\uss{\f}_{\eta}(x,\omega)] \leq\mathbb{E}[ \uss{\f}(x,\omega)] \leq\mathbb{E}[ \uss{\f}_{\eta}(x,\omega)] + \uss{\eta}\mathbb{E}[\beta(\omega)] \qquad \forall x. 
\end{align*}
    Suppose $f_{\eta}$ is defined as 
\begin{align}
    f_{\eta}(x) \triangleq \mathbb{E}[\uss{\f}_{\us{\eta}}(x,\omega)],
\end{align}
implying that $f_{\eta}(x) \leq f(x) \leq f_{\eta}(x) + \uss{\eta} \tilde{\beta}.$ In addition, since
$\|\nabla_x \usv{\f}_{\eta}(x\usv{,\omega})-\nabla_x \usv{\f}_{\eta}(y\usv{,\omega}) \| \leq \tfrac{\alpha(\omega)}{\eta} \|x-y\|,$ for all  $x, y$,  
by taking expectations on both sides and invoking Jensen's inequality, we have that  
\begin{align*}
\|\nabla_x f_{\eta}(x)-\nabla_x f_{\eta}(y) \| 
& = \|\nabla_x \mathbb{E}[\uss{\f}_{\eta}(x,\omega)]-\nabla_x \mathbb{E}[\uss{\f}_{\eta}(y,\omega)] \|\\ 
    {\scriptsize (\mbox{Jensen's inequality})} \qquad & \leq \mathbb{E}\left[\|\nabla_x\uss{\f}_{\eta}(x,\omega)-\nabla_x \uss{\f}_{\eta}(y,\omega) \| \right]\\
    {\scriptsize (\mbox{$\afo{\f_{\eta}}(\cdot,\omega)$ is $\tfrac{\alpha(\omega)}{\eta}$-smooth})}\qquad                                & \leq \mathbb{E}\left[\frac{\alpha(\omega)}{\eta}\right] \|x-y\| \\
                                              & \leq \tfrac{\tilde{\alpha}}{\eta} \|x-y\| \quad \forall x,y, 
\end{align*}
\blu{where in the first inequality, {we use} Theorem 7.47 in \cite{shapiro09lectures} (interchangeability of the derivative and the expectation)}. It follows that $f_{\eta}$ is $\tilde{\alpha}/\eta$-smooth. We may conclude that \us{$(\tilde{\alpha},\tilde{\beta})$-smoothability of $f$ follows}.\qed
}



\end{document}